\theoremstyle{plain}
\newtheorem{theorem}{Theorem}[section]
\newtheorem{maintheorem}{Theorem}
\newtheorem{lemma}[theorem]{Lemma}
\newtheorem{proposition}[theorem]{Proposition}
\newtheorem{mainproposition}[maintheorem]{Proposition}
\theoremstyle{remark}
\newtheorem{definition}{Definition}
\newtheorem{remark}[theorem]{Remark}
\newcommand{\field}[1]{\mathbb{#1}}
\newcommand{\RR}{\field{R}}
\newcommand{\CC}{\textbf{C}}
\newcommand{\ZZ}{\field{Z}}
\newcommand{\dpt}{\displaystyle}
\newcommand{\NN}{\field{N}}
\newcommand{\EU}{{\mathbb{S}}}
\newcommand{\loc}{\textnormal{loc}}
\newcommand{\In}{{\text{In}}}
\newcommand{\Out}{{\text{Out}}}
\numberwithin{equation}{section}
\begin{document}

\title[Rank-one strange attractors vs. Heteroclinic tangles]{Rank-one strange attractors  \emph{versus} \\ Heteroclinic tangles}
\author[Alexandre Rodrigues]{Alexandre A. P. Rodrigues \\ Centro de Matem\'atica da Univ. do Porto \\ Rua do Campo Alegre, 687,  4169-007 Porto,  Portugal }
\address{Alexandre Rodrigues \\ Centro de Matem\'atica da Univ. do Porto \\ Rua do Campo Alegre, 687 \\ 4169-007 Porto \\ Portugal}
\email{alexandre.rodrigues@fc.up.pt}

\date{\today}

\thanks{AR was partially supported by CMUP, which is financed by national funds through FCT -- Fundação para a Ci\^encia e Tecnologia, I.P., under the project with reference UIDB/00144/2020.  The author also acknowledges financial support from Program INVESTIGADOR FCT (IF/00107/2015).}

\subjclass[2010]{ 34C28; 34C37; 37D05; 37D45; 37G35 \\
\emph{Keywords:} Heteroclinic cycle; Heteroclinic bifurcation; Misiurewicz-type map; Rank-one strange attractor; Heteroclinic tangle; Sinks; Prevalence.}

\begin{abstract}
We present a mechanism for the emergence of strange attractors (observable chaos) in a two-parameter periodically-perturbed  family of differential equations on the plane. The two parameters are independent and act on different ways in the invariant manifolds of consecutive saddles in the cycle. 
When both parameters are zero, the flow exhibits an attracting heteroclinic   cycle associated to two equilibria. The first parameter makes the two-dimensional invariant manifolds of consecutive saddles in the cycle to pull apart; the second forces transverse intersection. These relative positions may be determined using the Melnikov method.

Extending the previous theory on the field, we prove the existence of many complicated dynamical objects in the two-parameter family, ranging from  ``large'' strange attractors supporting SRB (Sinai-Ruelle-Bowen) measures to superstable sinks and H\'enon-type attractors.   We   draw  a plausible bifurcation diagram associated to the problem under consideration and we show  that the occurrence of  heteroclinic tangles is a \emph{prevalent} phenomenon.

\end{abstract}

\maketitle
\setcounter{tocdepth}{1}

\section{Introduction}\label{intro}

Periodically perturbed homoclinic cycles have been studied extensively in history. The topic has occupied a center position of the chaos theory since the time of  Poincar\'e \cite{Poincare}. Literature on the mathematical analysis and on numerical simulations is rather abundant. We mention a few that are closely related to this paper: the theory of Smale's horseshoes \cite{Smale} and its applications to differential equations through the Melnikov method \cite{Melnikov}; the  work from Shilnikov's team \cite{Shilnikov70, Shilnikov_book_2}, those from Chow and Hale's school \cite{CH}, concerning chaos and heteroclinic bifurcations in autonomous differential equations and those from Wang, Ott, Oksasoglu and Young concerning rank-one strange attractors \cite{WY, WY2003, WY2006}.

In this paper, we study the dynamics of strange attractors (sustainable chaos) in periodically perturbed  differential equations with two heteroclinic connections associated to two dissipative saddles. 
An explicit formula for the first return map to a cross section is  derived. By extending the theory developed for the one loop case in \cite{WO, WY2003},   we obtain a generic overview on various admissible dynamical scenarios for the associated non-wandering sets. 
We state precise hypotheses  that imply the existence of   observable chaos and  sinks for a set of forcing amplitudes with positive Lebesgue measure.

Motivated by  bifurcation scenarios involving homoclinic cycles \cite{ChOW2013}, we prove the existence of many complicated dynamical objects for a given equation, ranging from an attracting torus of quasi-periodic solutions, Newhouse sinks and H\'enon-like attractors, to rank-one strange attractors with Sinai-Ruelle-Bowen (SRB) measures.  The theory developed in this paper is explicitly applicable to the analysis of various specific differential equations and the results obtained go beyond the capacity of the classical Birkhoff-Melnikov-Smale method \cite{GH}. 

Our purpose in writing this paper is not only to point out the range of phenomena that can occur when simple non-linear equations are periodically forced, but to bring to the foreground the methods that have allowed us to reach these conclusions in a  straightforward manner. These techniques are not limited to the system considered here.

\subsection*{Structure of the article}
This article is organised as follows: in Section \ref{s:setting} we describe the problem and we refer some related literature on the topic.  In Section \ref{s: main results}, we  state the main results of this research and we explain how they fit in the literature. In Sections \ref{s:definitions}, \ref{s: theory1}, we introduce some basic concepts for the understanding of this article and we review the theory of rank-one attractors stated in \cite{WY2003}.
The proof of the main results   will be performed in Section \ref{s: Proof Theorems A and B} and \ref{Proof_ThC}, after the precise computation of suitable first return maps to cross sections in Section \ref{first return map}.

In Sections \ref{s: proof Th D} and \ref{Proof_ThE} we prove the results related to the heteroclinic tangle. We also show that the existence of heteroclinic tangles is a prevalent phenomenon in a bifurcation diagram in Section \ref{Proof_ThF}. In Section \ref{s: Melnikov}, we describe explicitly the expressions requested  by the Melnikov integral in order to satisfy Hypotheses \textbf{(P6)--(P7)} of Section \ref{s:setting}. We discuss the consequences of our findings in Section \ref{s: discussion}. 

  In Appendix \ref{app1}, we list the main notation for constants and terminology in order of appearence. 
We use  the setting of \cite{OW2010, TW12} because we are interested in the admissible families that are obtained by passing to the singular limits of families of rank one maps with/without logarithmic singularities. 

We have endeavoured to make a self contained exposition bringing together all topics related to the proofs. We have drawn illustrative figures to make the paper easily readable.

\section{Setting}
\label{s:setting}
\subsection{Starting point}
\label{ss:setting}
Let $(x, y)\in \RR^2$ be the phase variables and $t$ be the independent variable. We start with the following autonomous differential equation:
\begin{equation}
\label{general_R2}
\left\{
\begin{array}{l}
\dot x= g_1(x,y)\\
\dot y= g_2(x,y) 
\end{array}
\right.
\end{equation}
where $g_1 $ and $g_2$  are analytic functions defined on an open domain $\mathcal{V} \subset \RR^2$ and $\dot{x}=  \frac{dx}{dt}$, $\dot{y}=  \frac{dy}{dt}$. We assume that \eqref{general_R2} has two hyperbolic equilibria in $\mathcal{V}$, say $O_1=(x_1, y_1)$ and $O_2=(x_2, y_2)$ (see Figure \ref{gs1}). Let $-c_1<0<e_1$ be the eigenvalues of the Jacobian matrix of \eqref{general_R2}  at $O_1$, and $\bar{u}({c_1}), \bar{u}({e_1})$ be their  associate unit eigenvectors. 

Analogously, let $-c_2<0<e_2$ and $\bar{u}({c_2}), \bar{u}({e_2})$ be the corresponding  eigenvalues and unit eigenvectors for the jacobian matrix of \eqref{general_R2} at $O_2$. We assume that both $O_1$ and $O_2$ satisfy the following conditions:
\bigbreak
\begin{enumerate}
 \item[\textbf{(P1)}] \label{B1}  (Dissipativeness) $c_1>e_1$ and $c_2>e_2$.
 \bigbreak
 \item[\textbf{(P2)}] \label{B2} (Non-resonant condition) For $i\in \{1,2\}$, there exist $d_1, \tilde{d_1}, d_2,  \tilde{d_2}\in \RR^+$ such that for all $m,n \in \NN$, the following inequality holds:
$$
|m\, c_i-n\, e_i|> d_i (\, |m|+|n|\, )^{-\tilde{d_i}}.
$$

 \medbreak
\end{enumerate}
\bigbreak

\begin{enumerate}
 \item[\textbf{(P3)}] \label{B3} (Heteroclinic cycle) The system \eqref{general_R2} has two heteroclinic solutions in $\mathcal{V}$: one from $O_1$ to $O_2$, which we denote by $\ell_1 = \{ (a_1(t),b_1(t)), t \in \RR\}$; and the other from $O_2$ to $O_1$, which we denote by $\ell_2 = \{ (a_2(t),b_2(t)), t \in \RR\}$, forming a heteroclinic cycle (see Figure \ref{gs1}).
\end{enumerate}
\bigbreak
For $\varepsilon>0$ sufficiently small,   we add two forcing terms to  \eqref{general_R2}  of the type:
\begin{equation}
\label{general_R2_perturbed}
\left\{
\begin{array}{l}
\dot x= g_1(x,y)+\mu_1 P_1(x,y,\omega\,  t)+\mu_2 Q_1(x,y,\omega\,  t)\\
\dot y= g_2(x,y) +\mu_1 P_2(x,y,\omega\,  t)+\mu_2 Q_2(x,y,\omega\,  t)
\end{array}
\right.
\end{equation}
where $\omega>0$, $ \mu_1, \mu_2$ are small independent parameters in $[0, \varepsilon] $ and $$P_1(x, y,t), P_2(x, y,t), Q_1(x, y,t),Q_2 (x, y,t) : \quad \mathcal{V}\times \RR  \longrightarrow  \RR$$ are $C^4$. We also assume that:
\bigbreak

\begin{enumerate}
 \item[\textbf{(P4)}](Periodic perturbations)  \label{B4}  For $j\in \{1,2\}$, there exists $T>0$ such that $$\forall x, y\in \mathcal{V}, \quad P_j(x,y,t+T)=P_j(x,y,t)\quad \text{and}\quad Q_j(x,y,t+T)=Q_j(x,y,t).$$ \bigbreak 
 \item[\textbf{(P5)}] \label{B5}  The value of $P_1(x, y,t)$, $P_2(x, y,t)$, $Q_1(x, y,t)$ and $Q_2(x, y,t)$ and their first derivatives with respect to $x$ and $y$ are all zero at $O_1$ and $O_2$ for all $t$.
\end{enumerate}

\begin{figure}[ht]
\begin{center}
\includegraphics[height=5.5cm]{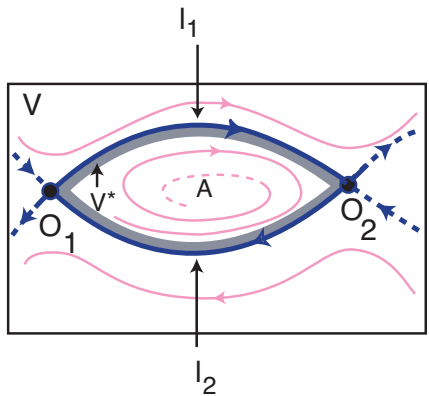}
\end{center}
\caption{\small  The dynamics of  $\eqref{general_R2}$ defined $\mathcal{V}\subset \RR^2$ is  governed by the existence of an heteroclinic cycle associated to $O_1$ and $O_2$. $\ell_1, \ell_2$: heteroclinic connections; $\mathcal{V}^\star$: inner basin of attraction of the cycle (absorbing domain); $\mathcal{A}:$ region limited by the cycle. }
\label{gs1}
\end{figure}
\bigbreak
In $\mathcal{V}$, the heteroclinic cycle $\ell_1\cup \ell_2\cup\{O_1, O_2\}$ limits a region that we call   $\mathcal{A}$. For $\mu_1=\mu_2=0$, there is an open set $\emptyset \neq \mathcal{V}^\star$ of $ \mathcal{A}$ such that the $\omega-$limit of all solutions starting in $\mathcal{V}^\star$ is $\ell_1\cup \ell_2$. In other words, the cycle $\ell_1\cup \ell_2\cup\{O_1, O_2\}$ is asymptotically stable ``by inside'' (configuration similar to that of Takens \cite{Takens94}\footnote{This configuration is also called the ``\emph{attracting Bowen eye}''.}).
\subsection{The lift}
\label{ss:lift}
We now introduce an angular variable $\theta \in \EU^1=\RR/{\ZZ T }$ to rewrite \eqref{general_R2_perturbed} as
\begin{equation}
\label{general_R3}
\left\{
\begin{array}{l}
\dot x= g_1(x,y)+\mu_1 P_1(x,y,\theta)+\mu_2 Q_1(x,y,\theta)\\
\dot y= g_2(x,y)+\mu_1 P_2(x,y,\theta)+\mu_2 Q_2(x,y,\theta) \\
\dot \theta= \omega \\
\end{array}
\right.
\end{equation}
or, in matricial notation, by:

\begin{equation}
\label{general_R3_matrix}
\left(\begin{array}{c} \dot{x}  \\ \dot y \\ \dot \theta \end{array}\right)= \left(\begin{array}{c} g_1(x,y)   \\ g_2(x,y) \\ \omega \end{array}\right)+ \mu_1  \left(\begin{array}{c} P_1(x,y, \theta)  \\ P_2(x,y, \theta) \\ 0 \end{array}\right) + \mu_2  \left(\begin{array}{c} Q_1(x,y, \theta)  \\ Q_2(x,y, \theta) \\ 0 \end{array}\right).
\end{equation}

\begin{figure}[ht]
\begin{center}
\includegraphics[height=4.9cm]{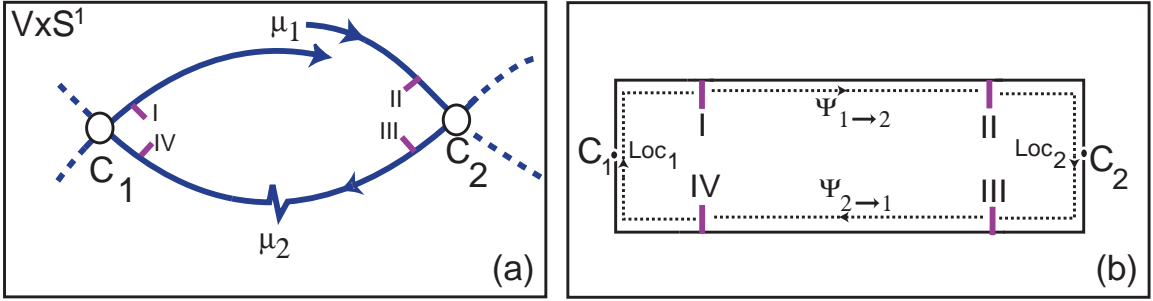}
\end{center}
\caption{\small (a) Scheme of the effects of the parameters $\mu_1, \mu_2$ on the equations \eqref{general_R3}. (b) Sketch of the local and transition maps. $I, II, III$ and $IV$ represent cross sections $\Out(\CC_1)$, $\In(\CC_2)$, $\Out(\CC_2)$ and $\In(\CC_1)$, respectively.  }
\label{notation1}
\end{figure}

The vector field associated to  equation \eqref{general_R3} will be denoted by $f_{(\mu_1, \mu_2)}$.
In the  phase space of \eqref{general_R3}, say   $\mathbb{V} = \mathcal{V} \times \EU^1$, for $\mu_1=\mu_2=0$, there is an attracting heteroclinic cycle $\Gamma$  between two hyperbolic periodic solutions, say $\CC_1\cup \CC_2$, connected by two manifolds diffeomorphic to tori $\mathcal{L}_1$ and $ \mathcal{L}_2$. The periodic solution $\CC_i$ is the lift   of $O_i$, $i=1,2$. Set  $\mathbb{A}= \mathcal{A}  \times \EU^1$.

 In the $(x, y, \theta)$--space, let $V_1$ and $V_2$ be two hollow cylinders around $\CC_1$ and $\CC_2$, respectively, where a local normal form may be defined. Let $\Out(\CC_1)$ and $\Out(\CC_2)$ be two sections (planes) transverse to $\mathcal{L}_1\cup \mathcal{L}_2$ where all initial conditions go outside $V_1$ and $V_2$ in positive time, respectively. Analogously,  let $\In(\CC_1)$ and $\In(\CC_2)$ be two sections (planes) transverse to $\mathcal{L}_1\cup \mathcal{L}_2$ where where all initial conditions go inside $V_1$ and $V_2$ in positive time, respectively.

\subsection{Parameters effects}
\label{ss:parameters}
Concerning the addition of the non-zero perturbing terms whose magnitude is governed by $\mu_1$ and $\mu_2$, the effect on the dynamics of \eqref{general_R3} differs from the type of  intersection between the invariant manifolds of $ \CC_1$ and $\CC_2$ as follows:
\medbreak
\begin{description}
\item[Case 1] $W^u(\CC_1) \pitchfork W^s(\CC_2)$ and $W^u(\CC_2) \pitchfork W^s(\CC_1)$ \\
\item[Case 2] $W^u(\CC_1) \cap W^s(\CC_2)= \emptyset$ and $W^u(\CC_2) \pitchfork W^s(\CC_1)$\\
\item[Case 3] $W^u(\CC_1) \pitchfork W^s(\CC_2)$ and $W^u(\CC_2) \cap W^s(\CC_1)= \emptyset$\\
\item[Case 4] $W^u(\CC_1) \cap W^s(\CC_2)= \emptyset$ and $W^u(\CC_2) \cap W^s(\CC_1)= \emptyset,$
\end{description}
\medbreak
\noindent where $A\pitchfork B$ means that the manifolds $A$ and $B$ intersect transversely.  
In Table 1, we identify these four cases.

\begin{table}[htb]
\label{Table1}
\begin{center}
\begin{tabular}{|c|c|c|} \hline 
&&\\
{Configuration}  & \qquad  $W^u(\CC_1) \pitchfork W^s(\CC_2)$ \qquad  \qquad &\qquad  $W^u(\CC_1) \cap W^s(\CC_2)=\emptyset$ \qquad \\
&&\\
\hline \hline
&&\\
$W^u(\CC_2) \pitchfork W^s(\CC_1)$ &Case 1 &Case 2 \\  &&\\
 \hline \hline  &&\\
 $W^u(\CC_2) \cap W^s(\CC_1)=\emptyset$ &Case 3 & Case 4 \\
&& \\ \hline

\hline

\end{tabular}
\end{center}
\label{notationA}
\bigskip
\caption{\small Four generic different cases for the dynamics of \eqref{general_R3}. }
\end{table} 
\begin{remark} Since $\CC_1$ and $\CC_2$ are hyperbolic, they persist for $\mu_1, \mu_2 > 0$ small.
For $\mu_1>0$, when we consider empty intersection of the invariant manifolds, we mean  $W^u(\CC_1)$  enters the absorbing domain $\mathbb{V}^\star$, otherwise there are no guarantees that the set of non-wandering points is non-empty.  
\end{remark}

For $ (\mu_1, \mu_2) \in [0, \varepsilon]\times [0, \varepsilon]$, let    $\mathcal{F}_{(\mu_1, \mu_2)}$ and   $\mathcal{G}_{(\mu_1, \mu_2)}$ be the return maps to the cross sections $\Out(\CC_1)$  and $\Out(\CC_2)$, respectively.     Denote
$$
\Omega\left(\mathcal{F}_{(\mu_1, \mu_2)}\right) =\left\{X \in \Out(\CC_1):\quad  \mathcal{F}_{(\mu_1, \mu_2)}^n (X) \in \Out(\CC_1),\quad  \forall n \in \NN\right\} 
$$
and
$$
\Lambda \left(\mathcal{F}_{(\mu_1, \mu_2)}\right) = \bigcap_{n\in \NN}  \mathcal{F}_{(\mu_1, \mu_2)}^n(\Omega_\mu) .
$$
 The set $\Omega(\mathcal{F}_{(\mu_1, \mu_2)})$ represents all solutions of \eqref{general_R3} that stay around the unforced heteroclinic loop $\mathcal{L}_1\cup \mathcal{L}_2$  in forward time and $\Lambda (\mathcal{F}_{(\mu_1, \mu_2)})$ represents all solutions that stay around  $\mathcal{L}_1\cup \mathcal{L}_2$, for all time.  Analogously we define $\Omega\left(\mathcal{G}_{(\mu_1, \mu_2)}\right)$ and $\Lambda \left(\mathcal{G}_{(\mu_1, \mu_2)}\right) $, replacing $ \Out(\CC_1)$ by $ \Out(\CC_2)$.
With respect to the effect of the perturbations governed by $\mu_1$ and $\mu_2$, both act independently and we state the following hypotheses ($A\equiv_\mathbb{V^\star} B$ means that the manifolds $A$ and $B$ coincide within $ {\mathbb{V}^\star}$):
\bigbreak
\begin{enumerate}
 \item[\textbf{(P6a)}] If $\mu_1>0$ and $\mu_2=0$,  then $W^u(\CC_1) \cap W^s(\CC_2)=\emptyset$ and $W^u(\CC_2) \equiv_{\mathbb{V}^\star} W^s(\CC_1)$. 
  \medbreak 
 \item[\textbf{(P6b)}]  If $\mu_2>0$ and $\mu_1=0$,  then $W^u(\CC_1)\equiv_{\mathbb{V}^\star}  W^s(\CC_2) $ and $W^u(\CC_2) \pitchfork W^s(\CC_1)$.
\end{enumerate}
\bigbreak
 When we refer to \textbf{(P6)}, we refer to \textbf{(P6a)} and \textbf{(P6b)}.  For $\mu_1, \mu_2 \in [0, \varepsilon]$, in the local coordinates of Subsection \ref{coords}, the flow associated to  \eqref{general_R3} induces the $C^3$--embeddings $$\Psi_{1 \rightarrow 2}: \Out(\CC_1) \rightarrow  \In(\CC_2) \quad \text{and} \quad \Psi_{2 \rightarrow 1}: \Out(\CC_2) \rightarrow  \In(\CC_1)$$ of the form\footnote{These hypotheses will be clear in Section \ref{first return map}.}: 
\begin{enumerate}
 \item[\textbf{(P7a)}]  
 \begin{equation}
 \label{P7a_cond}
 \Psi_{1 \rightarrow 2}\left(y_1^{(1)}, \theta^{(1)}\right) = \left[ c_1 y_1^{(1)}+\mu_1\Phi_1\left(y_1^{(1)}, \theta^{(1)} \right); \quad \theta^{(1)} + \xi_1 +\mu_1\Psi_1\left(y_1^{(1)}, \theta^{(1)} \right)\right]
 \end{equation}
 \end{enumerate}
  where $c_1 \neq 0$, $\xi_1 \in \RR$, $\Psi_1: \Out(\CC_1)\rightarrow\RR$ is $C^1$ and $ \Phi_1: \Out(\CC_1)\rightarrow\RR^+$ is $C^3$,  non-constant and has a finite number of non degenerate critical points. Furthermore,
 \begin{enumerate}
   \medbreak 
 \item[\textbf{(P7b)}]  
\begin{equation}
 \label{P7b_cond}
 \Psi_{2 \rightarrow 1}\left(y_1^{(2)}, \theta^{(2)}\right) = \left[ c_2 y_1^{(2)}+\mu_2\Phi_2\left(y_1^{(2)}, \theta^{(2)} \right); \quad \theta^{(2)}  + \xi_2 +\mu_2\Psi_2\left(y_1^{(2)}, \theta^{(2)} \right)\right]
\end{equation}
  
\end{enumerate}
where $c_2 \neq 0$, $\xi_2 \in \RR$, $\Psi_2, \Phi: \Out(\CC_2)\rightarrow\RR$ are $C^1$ and $ \Phi_2: \Out(\CC_2)\rightarrow\RR$  has at least two non degenerate zeros.  


\begin{remark}\label{remark 2.3}
When there is no risk of misunderstanding, we identify $\Phi_1(\theta) \equiv \Phi_1(0, \theta) $ and $ \Phi_2(\theta)\equiv\Phi_2(0, \theta),$ where $\theta\in \EU^1$.
\end{remark}

\subsection{Literature on the topic and the goal of this article}
Case 1 has been studied  in  \cite{ChOW2013, LR2017, RLA}; the authors found a sequence of suspended horseshoes accumulating on the cycle, homoclinic tangencies and Newhouse phenomena giving rise to sinks and H\'enon-type strange attractors.  Their results do not depend on the frequency  $\omega$. 
Case 4  has been discussed in \cite{Mohapatra}, where the authors proved the existence of an attracting torus for $\omega \approx 0$  and rank-one attractors for $\omega \gg 1$ (sufficiently large). 
Combining  the techniques developed in \cite{ChOW2013, Mohapatra}, in this paper we deal with Case 2 illustrated in Figure \ref{notation1}. Case 3 has a similar treatment. We also provide complementary results for Cases 1 and 4.

\section{Main results}
\label{s: main results}

  Once for all, let us fix $\varepsilon>0$ small. Denote by $\mathfrak{X}_{\Gamma}^4(\mathbb{V} )$ the  two-parameter family of $C^4$--vector fields \eqref{general_R3}   satisfying conditions \textbf{(P1)--(P7)}. Before going further, we set two positive constants that will be used in the sequel:
\begin{equation}
\label{formulas3.1}
K_F= \frac{1}{e_2 }+ \frac{c_2/e_2}{e_1 }\qquad \text{and}\qquad K_G= \frac{1}{e_1 }+ \frac{c_1/e_1}{e_2 }.
\end{equation}

Our first result strongly  relies  on the global map $\Psi_{1 \rightarrow 2}$ from $\Out(\CC_1)$ to  $ \In(\CC_2)$ (cf. \eqref{P7a_cond} and Remark \ref{remark 2.3}).
\bigbreak

\begin{maintheorem}\label{thm:B}
Let $f_{(\mu_1,0)} \in\mathfrak{X}_{\Gamma}^4(\mathbb{V})$, with $\mu_1>0$. If $\omega$ is such that 
$$ \dpt
 \omega\times  \sup_{\theta\in \EU^1}\left( \frac{\Phi_1'(\theta)}{\Phi_1(\theta)}\right)<\frac{1}{K_F},
$$
   then there is an invariant closed curve $\mathcal{C}\subset \Out(\CC_1)$ as the maximal attractor for the map $\mathcal{F}_{(\mu_1, 0)}$.  This closed curve is not contractible on $\Out(\CC_1)$.  
\end{maintheorem}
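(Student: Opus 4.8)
The plan is to run a graph-transform (invariant-section) argument for the first return map $\mathcal{F}_{(\mu_1,0)}$ to $\Out(\CC_1)$, whose explicit expression is produced in Section \ref{first return map}. In the local coordinates $(y,\theta)$ on $\Out(\CC_1)$, and up to $C^1$-small terms, this map has the skew form
\begin{equation*}
\mathcal{F}_{(\mu_1,0)}(y,\theta)=\Bigl(\,A\,\bigl(c_1 y+\mu_1\Phi_1(y,\theta)\bigr)^{c_1c_2/(e_1e_2)},\ \ \theta+B-\omega K_F\,\log\bigl(c_1 y+\mu_1\Phi_1(y,\theta)\bigr)\,\Bigr),
\end{equation*}
where $A,B$ are constants and $c_1$ is as in \eqref{P7a_cond}: the $y$-component compounds the affine part of $\Psi_{1\to2}$ with the two local passages near $\CC_2$ and near $\CC_1$, so that the two dissipative eigenvalue-ratios $c_2/e_2$ and $c_1/e_1$ multiply into the exponent $c_1c_2/(e_1e_2)$, while the $\theta$-component accumulates the two escape-time logarithms $\tfrac{\omega}{e_2}\log(\cdot)$ and $\tfrac{\omega c_2}{e_1e_2}\log(\cdot)$, whose sum is exactly $\omega K_F\log(\cdot)$ for $K_F$ as in \eqref{formulas3.1}. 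Two elementary facts drive everything. First, since $\mu_1>0$ and $\Phi_1>0$ one has $c_1 y+\mu_1\Phi_1\ge\mu_1\min_{\EU^1}\Phi_1>0$, so $\mathcal{F}_{(\mu_1,0)}$ is well defined; moreover it is defined on all of $\Out(\CC_1)$ because with $\mu_2=0$ one keeps $W^u(\CC_2)\equiv_{\mathbb{V}^\star}W^s(\CC_1)$ while $W^u(\CC_1)$ enters $\mathbb{V}^\star$, so every orbit starting near $\Gamma$ keeps returning to $\Out(\CC_1)$. Second, dissipativeness \textbf{(P1)} gives $c_1c_2/(e_1e_2)>1$, so $\mathcal{F}_{(\mu_1,0)}(\Out(\CC_1))$ lies in the thin annulus $\mathcal{N}:=\{0\le y\le C\mu_1^{c_1c_2/(e_1e_2)}\}$, whose height is $o(\mu_1)$; in particular after one iterate $c_1 y\ll\mu_1\Phi_1$, so the $\theta$-component is there a genuine perturbation of $\theta\mapsto\theta+B-\omega K_F\log(\mu_1\Phi_1(\theta))$.

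I would then fix the complete metric space $\mathfrak{S}$ of Lipschitz graphs $\theta\in\EU^1\mapsto(\gamma(\theta),\theta)$ with image in $\mathcal{N}$ and Lipschitz constant at most $L$ (a fixed small number), endowed with the $C^0$-distance, and define the graph transform $\mathcal{T}\gamma$ by $\mathrm{graph}(\mathcal{T}\gamma)=\mathcal{F}_{(\mu_1,0)}(\mathrm{graph}\,\gamma)$. The crucial point is that along $\mathrm{graph}\,\gamma$ the $\theta$-component
\begin{equation*}
\Theta_\gamma(\theta)=\theta+B-\omega K_F\,\log\bigl(c_1\gamma(\theta)+\mu_1\Phi_1(\gamma(\theta),\theta)\bigr)+(\text{$C^1$-small})
\end{equation*}
is a bi-Lipschitz, orientation-preserving homeomorphism of $\EU^1$: differentiating and using $c_1\gamma\ll\mu_1\Phi_1$ on $\mathcal{N}$ and $|\gamma'|\le L$ gives
\begin{equation*}
\Theta_\gamma'(\theta)\ \ge\ 1-\omega K_F\,\frac{\Phi_1'(\theta)}{\Phi_1(\theta)}-(\text{error}),
\end{equation*}
and the hypothesis $\omega\sup_{\EU^1}(\Phi_1'/\Phi_1)<1/K_F$ makes $\inf_{\EU^1}\bigl(1-\omega K_F\,\Phi_1'/\Phi_1\bigr)>0$; choosing $\varepsilon$ (hence $\mu_1$), the height constant $C$, and $L$ small absorbs the error and yields $\Theta_\gamma'>0$ uniformly in $\gamma$. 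Hence $\mathcal{T}\gamma=(\text{$y$-component of }\mathcal{F}_{(\mu_1,0)})\circ\Theta_\gamma^{-1}$ is well defined on the whole circle, its graph is non-contractible in the annulus $\Out(\CC_1)$, and $\mathcal{T}\gamma\in\mathfrak{S}$ (the image stays in $\mathcal{N}$ by the $y$-contraction; the Lipschitz bound follows from the $y$-contraction together with the uniform bound on $(\Theta_\gamma^{-1})'$).

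Next I would prove that $\mathcal{T}$ is a $C^0$-contraction: the $y$-component of $\mathcal{F}_{(\mu_1,0)}$ is Lipschitz in $y$ with constant $\lesssim\mu_1^{c_1c_2/(e_1e_2)-1}$, which tends to $0$ as $\mu_1\to 0$, and, combined with the standard estimate bounding $\|\Theta_{\gamma_1}^{-1}-\Theta_{\gamma_2}^{-1}\|_{C^0}$ by a constant times $\|\gamma_1-\gamma_2\|_{C^0}$, this gives $\|\mathcal{T}\gamma_1-\mathcal{T}\gamma_2\|_{C^0}\le\kappa\|\gamma_1-\gamma_2\|_{C^0}$ with $\kappa<1$ for $\varepsilon$ small. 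Banach's fixed point theorem produces a unique $\gamma_\ast\in\mathfrak{S}$ with $\mathcal{T}\gamma_\ast=\gamma_\ast$; the curve $\mathcal{C}:=\mathrm{graph}\,\gamma_\ast$ is $\mathcal{F}_{(\mu_1,0)}$-invariant and non-contractible in $\Out(\CC_1)$. For the maximal-attractor statement, recall that $\mathcal{F}_{(\mu_1,0)}$ is defined on all of $\Out(\CC_1)$ and maps it into $\mathcal{N}$; iterating, $\mathcal{T}^n$ converges to $\gamma_\ast$ while transversely to $\mathcal{C}$ the map contracts at rate $\le\kappa$, so the nested sets $\mathcal{F}_{(\mu_1,0)}^n(\Out(\CC_1))$ shrink to $\mathcal{C}$; therefore $\Lambda(\mathcal{F}_{(\mu_1,0)})=\bigcap_{n\in\NN}\mathcal{F}_{(\mu_1,0)}^n(\Out(\CC_1))=\mathcal{C}$, i.e.\ $\mathcal{C}$ is the maximal attractor.

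The step I expect to be the main obstacle is the uniform verification that $\Theta_\gamma$ is a circle homeomorphism for every $\gamma\in\mathfrak{S}$, since that is precisely where the quantitative hypothesis $\omega\sup(\Phi_1'/\Phi_1)<1/K_F$ is consumed: one must control the error terms coming from the local normal forms near $\CC_1$ and $\CC_2$, from the $\mu_1\Psi_1$ correction in \eqref{P7a_cond}, and from the gap between $c_1\gamma+\mu_1\Phi_1(\gamma,\theta)$ and the nominal argument $\mu_1\Phi_1(0,\theta)$, and show that all of them are negligible precisely on the invariant annulus $\mathcal{N}$ --- which is where dissipativeness \textbf{(P1)}, forcing the exponent above $1$ and hence the height of $\mathcal{N}$ to be $o(\mu_1)$, is indispensable.
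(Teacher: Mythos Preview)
Your proposal is correct and takes a genuinely different route from the paper. The paper proves Theorem~\ref{thm:B} by first setting up the whole rank-one machinery of Section~\ref{s: theory1}: it constructs the sequence $\mu_{(a,n)}\to 0$, proves $C^3$-convergence of $\mathcal{F}_{(a,\mu_{(a,n)})}$ to the singular limit $(\mathbf{0},h_a)$ with $h_a(\theta)=\theta+\xi+a-\omega K_F\ln\phi_1(0,\theta)$ (Lemma~\ref{important lemma}), and then observes in \S\ref{Proof_ThA} that the hypothesis $\omega\sup(\Phi_1'/\Phi_1)<1/K_F$ is exactly the condition $h_a'>0$, so the singular limit is a circle diffeomorphism and the attractor of $\mathcal{F}_\mu$ is an invariant graph over $\EU^1$. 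Your argument bypasses the singular-limit framework entirely and runs a direct graph-transform (invariant-section) argument on $\mathcal{F}_{(\mu_1,0)}$ itself: you use dissipativeness to trap the dynamics in a thin annulus of height $O(\mu_1^{\delta})=o(\mu_1)$, show that on this annulus the angular component $\Theta_\gamma$ is a diffeomorphism precisely under the stated hypothesis, and then obtain the invariant curve as a Banach fixed point. This is essentially Afraimovich's Annulus Principle, which the paper explicitly cites after the statement of Theorem~\ref{thm:B} as an alternative route. Your approach is more self-contained for this particular theorem and makes the role of the hypothesis more transparent (it is consumed exactly where $\Theta_\gamma'>0$ is needed), while the paper's approach has the advantage that the same singular-limit setup is reused verbatim for Theorems~\ref{thm:F} and~\ref{thm:C}, so the overhead is amortised across the three results.
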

 \bigbreak

Although we use the Theory of Rank-one attractors to prove Theorem \ref{thm:B}, this result may be shown using  the Afraimovich's Annulus Principle~\cite{AHL2001}. 
The  curve $ \mathcal{C}\subset \Out(\CC_1)$ is \emph{globally attracting} in the sense that, for every $X\in \mathbb{V}^\star$, there exists a point $X_0\in \mathcal{C}$ such that 
$$
\lim_{n\rightarrow +\infty} \left\| \mathcal{F}_{(\mu_1, 0)}^n (X) -\mathcal{F}_{(\mu_1, 0)}^n (X_0) \right\| =0,
$$
where $\|\star \|$ is the usual norm induced from $\RR^2$.
The attracting invariant curve for  $\mathcal{F}_{(\mu_1,0)}$ is  the graph of a smooth map and corresponds to an attracting two-dimensional torus for the flow of \eqref{general_R3}.   When $\mu_1>0$ and $\mu_2=0$, one branch of $W^u(\CC_1)$ accumulates on the torus.

The dynamics of $\mathcal{F}_{(\mu_1, 0)}$ induces on  $\mathcal{C}$  a circle map. Indeed, for any given interval of unit length $[0, \varepsilon]$, there is a positive measure set $\Delta \subset [0, \varepsilon]$ so that the rotation number of $\mathcal{F}_{(\mu_1, 0)}|_\mathcal{C}$ is irrational  if and only if $\mu_1 \in \Delta$.
This implies the existence of a set of positive Lebesgue measure (in the bifurcation parameter)  for which the torus has a dense orbit, \emph{i.e.} the torus is a \emph{minimal attractor} \cite{Herman}.

\bigbreak
\begin{maintheorem}\label{thm:F}
Let $f_{(\mu_1,0)} \in\mathfrak{X}_{\Gamma}^4(\mathbb{V})$, with $\mu_1>0$.
There exists  $\omega^\star>0$ such that for  all $\omega>\omega^\star$, there is a subset of positive Lebesgue measure $\Delta\subset [0, \varepsilon]$ for which the map $\mathcal{F}_{(\mu_1, 0)}$ with $\mu_1\in \Delta$, exhibits rank-one strange attractors with an ergodic SRB measure. 
 \end{maintheorem}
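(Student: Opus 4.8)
The plan is to apply the theory of rank-one attractors reviewed in Section \ref{s: theory1} (the Wang--Young framework, as formulated in \cite{WY2003, OW2010, TW12}) to the return map $\mathcal{F}_{(\mu_1, 0)}$ and its restriction to a neighbourhood of the attracting invariant curve $\mathcal{C}$ produced by Theorem \ref{thm:B}. The key observation is that, after the explicit computation of the first return map in Section \ref{first return map}, the map $\mathcal{F}_{(\mu_1,0)}$ acquires a strong contraction in the ``radial'' variable $y_1^{(1)}$ with rate governed by the local saddle behaviour near $\CC_1$ and $\CC_2$, while the angular dynamics on $\mathcal{C}$ is essentially the circle map $\theta \mapsto \theta + \text{const} + (\text{something proportional to } \omega)\,\log$-type expression coming from the passage near the saddles composed with the non-constant function $\Phi_1$. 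As $\omega \to +\infty$ this angular map becomes strongly expanding on most of the circle and develops critical points inherited from the non-degenerate critical points of $\Phi_1$ (hypothesis \textbf{(P7a)}); in the singular limit $\omega = \infty$ one obtains a one-dimensional Misiurewicz-type map.

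First I would write $\mathcal{F}_{(\mu_1,0)}$ in the form of a rank-one family: coordinates $(y,\theta)$ on $\Out(\CC_1)$, with $\mathcal{F}_{(\mu_1,0)}(y,\theta) = (G_\omega(y,\theta), \Phi_\omega(y,\theta))$, and rescale so that the $\theta$-equation, in the singular limit, becomes a one-parameter family $f_a$ of circle maps satisfying the Misiurewicz conditions (C1)--(C3) of the Wang--Young setup: non-degenerate critical points, expansion away from the critical set, and the critical orbits staying in the expanding region. The constants $K_F$, $K_G$ of \eqref{formulas3.1} encode the relevant derivative estimates through the saddle passages, and the inequality relating $\omega$ and $\sup \Phi_1'/\Phi_1$ from Theorem \ref{thm:B} is exactly the threshold separating the ``circle diffeomorphism / torus'' regime from the ``expanding / strange attractor'' regime — so here one needs the opposite inequality, which is why a lower bound $\omega > \omega^\star$ appears. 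Then I would verify the remaining structural hypotheses of the rank-one theorem: (i) the map is an embedding with uniformly small Jacobian (distortion/dissipation: the $y$-contraction dominates, using \textbf{(P1)}); (ii) $C^3$-regularity and boundedness of derivatives up to order $3$, which holds because $\Phi_1$ is $C^3$ with finitely many non-degenerate critical points and the global maps $\Psi_{1\to 2}$, $\Psi_{2\to 1}$ are $C^3$; (iii) the transversality/parameter-nondegeneracy condition ($\partial_a$ of the critical orbit moves transversally to the critical value), which is what forces the conclusion to hold only for a positive-measure parameter set $\Delta \subset [0,\varepsilon]$ rather than for all $\mu_1$.

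Once these verifications are in place, the Wang--Young theorem (as stated in Section \ref{s: theory1}) gives directly: for all sufficiently large $\omega$ there is a positive-measure set $\Delta \subset [0,\varepsilon]$ of parameters $\mu_1$ for which $\mathcal{F}_{(\mu_1,0)}$ admits a rank-one strange attractor carrying an ergodic SRB measure, with positive Lyapunov exponent on a positive-measure set. I would also note that the attractor lies in the trapping region for $\mathcal{F}_{(\mu_1,0)}$ identified in the proof of Theorem \ref{thm:B} (a neighbourhood of $\mathcal{C}$ inside $\Out(\CC_1)$), so the "rank-one" admissibility is genuinely satisfied.

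The main obstacle I expect is the verification that the singular-limit family of one-dimensional maps is genuinely a Misiurewicz family with a non-degenerate parameter: one must show that the critical points of the limiting circle map (coming from the critical points of $\Phi_1$) have orbits that remain bounded away from the critical set, and that these orbits depend on the parameter $\mu_1$ in a non-degenerate (transversal) way. This requires a careful bookkeeping of how $\mu_1$ enters both $\Phi_1$ and the phase shift $\xi_1$ through the explicit first-return computation of Section \ref{first return map}, together with a distortion estimate ensuring that the expansion outside a neighbourhood of the critical set, which is $O(\omega)$, genuinely dominates the bounded loss of hyperbolicity near the critical points uniformly in $\mu_1\in\Delta$. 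The non-resonance condition \textbf{(P2)} is what guarantees the logarithmic terms in the return map behave controllably and do not introduce spurious small divisors into these estimates.
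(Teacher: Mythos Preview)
Your overall strategy---verify the Wang--Young hypotheses \textbf{(H1)}--\textbf{(H7)} for the first return map and invoke Theorem~\ref{th_review}---is the same as the paper's. However, there are two genuine confusions in your proposal that would prevent the argument from going through as written.

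\medskip
\textbf{1. The singular limit is $\mu_1\to 0$, not $\omega\to\infty$.} You write that ``in the singular limit $\omega=\infty$ one obtains a one-dimensional Misiurewicz-type map''. This is not how the rank-one framework is applied here. The parameter $b$ in the Wang--Young setup is $\mu_1$ (more precisely, a sequence $\mu_{(a,n)}\to 0$), and the singular limit is obtained by letting $\mu_1\to 0$ with $\omega$ \emph{fixed}. The role of taking $\omega$ large is separate: it ensures, via Proposition~\ref{Prop 7.3}, that the limiting one-parameter family of circle maps $h_a$ satisfies \textbf{(H4)}, \textbf{(H5)} and \textbf{(H7)} (Misiurewicz, transversality, mixing). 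The paper makes this precise in \S\ref{ss: singular limit}: one sets $k(x)=-\omega K_F\ln x$ and defines the $a$-parameter by $a=k(\mu_1)\bmod 2\pi$. As $\mu_1\to 0$ this phase sweeps $\EU^1$ infinitely often, so for each $a$ there is a sequence $\mu_{(a,n)}\to 0$ with $\mathcal{F}_{(a,\mu_{(a,n)})}\to(\mathbf{0},h_a)$ in $C^3$, where
\[
h_a(\theta)=\theta+\xi+a-\omega K_F\ln\phi_1(0,\theta).
\]
You never identify this reparametrisation; your discussion of ``how $\mu_1$ enters both $\Phi_1$ and the phase shift $\xi_1$'' misses that $\Phi_1$ and $\xi_1$ are $\mu_1$-independent and that the essential parameter dependence is through the term $-\omega K_F\ln\mu_1$ in $\mathcal{F}_2$.

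\medskip
\textbf{2. The invariant curve $\mathcal{C}$ from Theorem~\ref{thm:B} is not available here.} You repeatedly refer to ``a neighbourhood of the attracting invariant curve $\mathcal{C}$ produced by Theorem~\ref{thm:B}'' as the trapping region. But Theorem~\ref{thm:B} requires the \emph{opposite} inequality on $\omega$ (small $\omega$), so for $\omega>\omega^\star$ the curve $\mathcal{C}$ need not exist. The trapping region in the paper's proof is simply $\Out^+(\CC_1)\cap\mathbb{V}^\star$, which is forward-invariant because the unperturbed cycle is attracting and $W^u(\CC_1)$ enters the absorbing domain under \textbf{(P6a)}; no reference to $\mathcal{C}$ is needed or possible.

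\medskip
Once these two points are corrected, the remaining verifications (distortion bound, $C^3$-convergence, nondegeneracy at turns) proceed as you outline, and the paper's Proposition~\ref{Prop 7.3} disposes of \textbf{(H4)}, \textbf{(H5)}, \textbf{(H7)} in one stroke---this replaces the ``careful bookkeeping'' of critical orbits you anticipate as the main obstacle. Your remark on \textbf{(P2)} is essentially right: it is used for the $C^3$ local linearisation (Proposition~\ref{local map prop}), not for small divisors in the return map itself.
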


\bigbreak

 The existence of rank-one strange attractors for $ \mathcal{F}_\mu$ is an \emph{abundant phenomenon} in the terminology of \cite{MV93}. Furthermore, these attractors are  \emph{``large''} according to \cite{BST98}, \emph{i.e.} their non-wandering points wind around an entire non-contractible annulus.
 These  strange attractors  have strong statistical properties that will be made precise in Section \ref{s: theory1} (see also \cite{WY2006}).

The proof of Theorems \ref{thm:B} and \ref{thm:F}  is performed  in Subsections \ref{Proof_ThA} and \ref{Proof_ThB} by reducing the analysis of the two-dimensional map $\mathcal{F}_{(\mu_1, 0)}$ to the dynamics of a one-dimensional map, via the \emph{Rank-one attractors}'s theory. 

\bigbreak
\begin{maintheorem}\label{thm:C}
Let $f_{(\mu_1,0)} \in\mathfrak{X}_{\Gamma}^4(\mathbb{V})$, with $\mu_1>0$.
Under an open technical hypothesis \textbf{(TH)} on the space of parameters, for $\omega\gg 1$ there exists a sequence of real numbers converging to zero, say $\left(\mu_{1,n}\right)_{n\in \NN}$, for which the flow of \eqref{general_R3} exhibits a periodic sink. 
\end{maintheorem}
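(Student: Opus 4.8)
The plan is to exploit the reduction, already used for Theorems~\ref{thm:B} and~\ref{thm:F}, of the two-dimensional return map $\mathcal{F}_{(\mu_1,0)}$ to a one-parameter family of circle maps on the attracting invariant curve $\mathcal{C}\subset\Out(\CC_1)$ of Theorem~\ref{thm:B}, and then to produce the sinks by a standard saddle-node/period-doubling argument in that one-dimensional family. Concretely, for $\omega\gg 1$ the essential one-dimensional dynamics is governed by a map of the form $\theta\mapsto \theta+\xi_1+\omega\,\Phi_1(\theta)+(\text{small})$ modulo the normalisation in \textbf{(P7a)}; write $g_{\mu_1}$ for the induced circle map. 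The first step is to make this reduction precise: show that on $\mathcal{C}$ the map $\mathcal{F}_{(\mu_1,0)}$ is $C^3$-conjugate (or $C^3$-close) to $g_{\mu_1}$, and record how the multiplier of a fixed/periodic point of $g_{\mu_1}$ controls the multipliers of the corresponding periodic orbit of $\mathcal{F}_{(\mu_1,0)}$ — here the strong contraction transverse to $\mathcal{C}$ (guaranteed by dissipativeness \textbf{(P1)} and $\omega\gg 1$, cf.\ the constant $K_F$) means that a periodic point of $g_{\mu_1}$ with $|g_{\mu_1}'|<1$ lifts to a periodic \emph{sink} of the flow of \eqref{general_R3}.

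The second step is the bifurcation analysis of the family $\mu_1\mapsto g_{\mu_1}$. Since $\Phi_1$ is $C^3$, non-constant and has finitely many non-degenerate critical points (by \textbf{(P7a)}), for $\omega\gg 1$ the graph of $g_{\mu_1}$ crosses the diagonal (mod $1$) transversally except at finitely many parameter values where it is tangent; as $\mu_1$ varies these tangencies unfold generically, producing saddle-node bifurcations of fixed points of $g_{\mu_1}$ (this is exactly where the open technical hypothesis \textbf{(TH)} is invoked — it guarantees the relevant non-degeneracy/transversality of the unfolding, i.e.\ that the tangency is quadratic and moves with non-zero speed in $\mu_1$). At a saddle-node, just on the side where the two fixed points are born, one of them has multiplier in $(0,1)$, hence is attracting for $g_{\mu_1}$; combined with Step~1 this gives a periodic sink for the flow. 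To obtain a \emph{sequence} $\mu_{1,n}\to 0$ rather than a single value, I would scale: as $\omega\to\infty$ the number of diagonal crossings of $g_{\mu_1}$ grows, and the parameter windows of attracting behaviour occur on a scale $\sim 1/\omega$; alternatively, keeping $\omega$ fixed and letting $\mu_1\to 0$, one uses the periodicity of the $\theta$-dynamics together with the slow drift of the rotation-number-type quantity to get infinitely many saddle-node events accumulating at $\mu_1=0$. Either way one extracts $(\mu_{1,n})_{n\in\NN}$ with $\mu_{1,n}\to 0$ each carrying a periodic sink.

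The third step is bookkeeping: translate the periodic point of $\mathcal{F}_{(\mu_1,0)}$ back to a genuine periodic solution of the flow \eqref{general_R3} through the suspension/first-return correspondence set up in Section~\ref{first return map}, and check that ``sink for the return map'' transfers to ``sink for the flow'' — this is where one again uses that the section-to-section transition maps $\Psi_{1\to 2},\Psi_{2\to 1}$ are $C^3$ embeddings and that the local passages near $\CC_1,\CC_2$ are strongly contracting by \textbf{(P1)}, so that the full derivative of the Poincar\'e map at the periodic point has all eigenvalues of modulus $<1$.

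The main obstacle I expect is Step~1 made quantitative: one must show that the transverse contraction dominates uniformly, so that a multiplier slightly less than $1$ in the one-dimensional reduction really does yield a sink (and not merely a point that is attracting along $\mathcal{C}$ but neutral or expanding transversally). This requires a careful estimate of the full $2\times2$ derivative of $\mathcal{F}_{(\mu_1,0)}$ near $\mathcal{C}$, controlling the off-diagonal coupling between the $\theta$- and $y$-directions in terms of $\omega$, $c_1,c_2,e_1,e_2$ and the $C^1$-norms of $\Phi_1,\Psi_1$; it is precisely to keep this coupling small that one needs $\omega$ large and it is the content that the technical hypothesis \textbf{(TH)} is designed to isolate. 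The bifurcation-theoretic Step~2 is comparatively routine once \textbf{(TH)} is in place, being a one-dimensional unfolding of a quadratic tangency.
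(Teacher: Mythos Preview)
Your reduction in Step~1 has a genuine gap. The attracting invariant curve $\mathcal{C}$ of Theorem~\ref{thm:B} exists only under the smallness condition $\omega\cdot\sup(\Phi_1'/\Phi_1)<1/K_F$; for $\omega\gg 1$ the singular limit $h_a(\theta)=\theta+\xi+a-\omega K_F\ln\phi_1(\theta)$ has nondegenerate critical points and is \emph{not} a circle diffeomorphism, so there is no invariant graph on which to restrict $\mathcal{F}_{(\mu_1,0)}$. The correct one-dimensional reduction in this regime is the \emph{singular limit} of Lemma~\ref{important lemma}: one passes to the limit $\mathcal{F}_{(a,\mu_{(a,n)})}\to(\mathbf{0},h_a)$ as $n\to\infty$ along the sequence~\eqref{sequence1}, not to a restriction on a persistent curve. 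Your Step~1 as written therefore does not apply.

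You have also misread the hypothesis \textbf{(TH)}. In the paper it is the condition $\exp(\lambda_0)>2$ on the expansion rate of the Misiurewicz map $h_{a^\star}$ (cf.~\S\ref{Misiurewicz-type map}), not a saddle-node transversality condition. The paper's mechanism for producing sinks is accordingly quite different from yours: rather than catching a multiplier just below $1$ at a saddle-node, one invokes Theorem~\ref{th_review2} (Ott--Wang) to find, for an admissible family with a completely accessible vertex in the graph $\mathcal{P}(J_\delta)$, a sequence $a_n\to\hat a$ at which $h_{a_n}$ has a \emph{superstable} periodic orbit (a critical point is periodic, so one multiplier is $0$). Lemmas~\ref{lemma1} and~\ref{lemma2} supply the combinatorics needed for complete accessibility, and \textbf{(TH)} enters only in Lemma~\ref{lemma2}. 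The translation back to $\mu_1$ is then the explicit formula $\mu_n=\exp((a_n-2n\pi)/(\omega K_F))\to 0$. Because the sink is superstable in the singular limit, its persistence to the two-dimensional map for small $b=\mu_{(a,n)}$ is immediate, so the delicate coupling estimate you anticipate in your ``main obstacle'' never arises.
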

 This sink does not follow from the Newhouse theory \cite{Newhouse79}; it is \emph{superstable} in the sense that one of its Floquet multipliers is very close to zero. The open condition  \textbf{(TH)}  stated in Theorem \ref{thm:C} is technical and depends on a specific variable in the definition of Misiurewicz-type map in Subsection \ref{Misiurewicz-type map}. It will be clear in Section \ref{Proof_ThC}, where the proof of the result is performed.
 
 \bigbreak

 The next two results concern the case $\mu_1=0$ and $\mu_2>0$, where $W^u(\CC_1)\equiv W^s(\CC_2)$ and $W^u(\CC_2)$ meets transversely $W^s(\CC_1)$, giving rise to what we usually call \emph{heteroclinic tangle}. The \emph{Rank-one maps}'s theory  does not apply in this context.
 
\bigbreak
\begin{maintheorem} 
\label{tangency1}
Let $f_{(0,\mu_2)} \in\mathfrak{X}_{\Gamma}^4(\mathbb{V})$, with $\mu_2>0$. The flow of \eqref{general_R3} satisfies the following properties for $\mu_2>0$:
\medbreak
\begin{enumerate}
\item the set $\Lambda \left(\mathcal{G}_{(0, \mu_2)}\right)$ contains a horseshoe with infinitely many branches.
\medbreak
\item  there is a sequence $(\mu_{2,i})_{i\in \NN}$ of positive real numbers converging to zero, such that the manifolds $W^u(\CC_2)$ and $W^s(\CC_2)$ meet tangentially\footnote{This tangency is quadratic (generic). } for the flow of $f_{\left(0, \mu_{2,i}\right)}$.
\medbreak
\item  there exists a positive measure set  of parameters   in $I=[0,\varepsilon]$  so that  $\mathcal{G}_{(0, \mu_2)}$ admits a strange attractor with an ergodic SRB measure.
\medbreak
\item  there is a sequence $(\tilde{\mu}_{2,i})_{i\in \NN}$ of positive real numbers converging to zero, such that the flow of $f_{\left(0, \tilde\mu_{2,i}\right)}$ has a periodic sink.


\end{enumerate}
\end{maintheorem}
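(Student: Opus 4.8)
The plan is to reduce everything to the study of the return map $\mathcal{G}_{(0,\mu_2)}$ to $\Out(\CC_2)$, whose explicit form follows from the first-return-map computation of Section \ref{first return map} together with \textbf{(P7b)}. Since $\mu_1=0$, the connection $W^u(\CC_1)\equiv_{\mathbb{V}^\star}W^s(\CC_2)$ is preserved, so the dynamics near $\mathcal{L}_1$ is a pure passage (a contraction with a rotation), while the transition associated with $\mathcal{L}_2$ is governed by $\Psi_{2\rightarrow 1}$, whose second coordinate is a non-constant function $\Phi_2$ having at least two nondegenerate zeros. Composing the local saddle maps near $\CC_1,\CC_2$ (in the normal-form coordinates of Subsection \ref{coords}) with these global transitions, the return map $\mathcal{G}_{(0,\mu_2)}$ takes, after rescaling, the form of a strongly dissipative perturbation of a one-dimensional map $x\mapsto \mu_2 \Phi_2(\theta)$-type family with a logarithmic ``bending'' coming from the passage near the saddles; the zeros of $\Phi_2$ correspond to the folds of this map. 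This is precisely the structure that produces a Smale horseshoe with infinitely many legs, by the same reasoning as in \cite{ChOW2013} for the homoclinic loop.

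For item (1), I would exhibit, on $\Out(\CC_2)$, a countable family of ``horizontal'' strips $\{H_n\}_{n\ge n_0}$ (indexed by the number of turns spent near $\CC_1\cup\CC_2$, i.e. by how close an orbit passes to the stable manifolds) whose images under $\mathcal{G}_{(0,\mu_2)}$ are ``vertical'' strips crossing all the $H_m$ with the correct cone and contraction/expansion estimates; a Conley--Moser type verification then gives a uniformly hyperbolic invariant set semiconjugate to a full shift on a countable alphabet, contained in $\Lambda(\mathcal{G}_{(0,\mu_2)})$. For item (2), I would track one branch of $W^u(\CC_2)$ under $\mathcal{G}_{(0,\mu_2)}$: as $\mu_2$ varies, this branch sweeps across $W^s(\CC_2)$, and because the images develop quadratic folds (the nondegenerate critical points inherited from the zeros of $\Phi_2$ and the logarithmic bending), at a decreasing sequence $\mu_{2,i}\to 0$ the fold becomes tangent to the local stable manifold; nondegeneracy of the critical point yields that the tangency is quadratic and generic, and unfolds with nonzero speed in $\mu_2$. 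Items (3) and (4) then follow from the classical theory of homoclinic tangencies for dissipative surface diffeomorphisms: near each parameter $\mu_{2,i}$ of quadratic homoclinic tangency, with the saddle being sectionally dissipative, one is in the Mora--Viana / Benedicks--Carleson setting, so there is a positive-measure set of parameters with a Hénon-like strange attractor carrying an ergodic SRB measure (item (3), using also \cite{WY2006}-type statistical conclusions), and, by the Newhouse--Yorke--Gavrilov--Shilnikov mechanism, intervals of parameters accumulating on $\mu_{2,i}$ containing values $\tilde\mu_{2,i}\to 0$ for which there is an attracting periodic orbit (item (4)).

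The main obstacle I expect is item (2): making rigorous that the relevant branch of $W^u(\CC_2)$ genuinely crosses $W^s(\CC_2)$ transversally in the extended phase space and that the resulting tangencies are honestly quadratic. This requires controlling the composition of the near-saddle passage maps (which introduce the logarithmic distortion and, crucially, do \emph{not} destroy the nondegeneracy of the critical points supplied by \textbf{(P7b)}) with the global map, and then checking a genericity/transversality condition in the parameter $\mu_2$ — essentially a Melnikov-type nondegeneracy, which is where Hypotheses \textbf{(P6)--(P7)} and the explicit Melnikov computations of Section \ref{s: Melnikov} enter. Once the quadratic homoclinic tangency and its generic unfolding are established, items (1), (3) and (4) are applications of, respectively, the Conley--Moser conditions, the Mora--Viana theorem, and Newhouse's theorem, adapted to the flow via suspension; the bulk of the new work is the careful derivation and analysis of $\mathcal{G}_{(0,\mu_2)}$ in Section \ref{first return map} and the geometry of the tangency.
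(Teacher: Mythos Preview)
Your proposal is essentially the paper's approach: reduce to the return map $\mathcal{G}_{(0,\mu_2)}$ on $\Out(\CC_2)$, use the folding/stretching geometry to get the infinite-branch horseshoe (item (1)), locate a sequence of quadratic tangencies (item (2)), and then invoke Mora--Viana and Gavrilov--Shilnikov/Newhouse for (3) and (4). The paper packages the geometry of item (2) via an explicit \emph{spiral lemma} (Lemma~\ref{lemma_auxiliar}): the image of $W^u(\CC_2)\cap\In^+(\CC_1)$ under $Loc_2\circ\Psi_{1\to 2}\circ Loc_1$ is a spiral in $\Out(\CC_2)$ accumulating on $\{\overline{y}_1^{(2)}=0\}$, with a fold point that winds around as $\mu_2\to 0$ faster than the curve $\eta_s=W^s_{\loc}(\CC_1)\cap\Out(\CC_2)$, forcing a sequence of quadratic tangencies.

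One point to correct: the fold of this spiral does \emph{not} come from the zeros of $\Phi_2$; those zeros are the points where $W^u(\CC_2)$ hits $W^s(\CC_1)$ (the endpoints of the spiral arm, where the logarithm in the local passage diverges). The fold point is produced by the \emph{maximum} of $\eta_u$ (equivalently, an extremum of $\Phi_2$ on the positive side), which after the local saddle map becomes the turning point of the spiral. This distinction matters for checking that the tangency is genuinely quadratic and unfolds with nonzero speed.

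For item (3), the paper does not apply Mora--Viana directly to the tangency of $W^u(\CC_2)$ with $W^s(\CC_2)$; instead it picks a dissipative hyperbolic periodic point $P_i$ of the horseshoe near a $0$-pulse, shows (again via the spiral mechanism) that $W^u(P_i)$ develops quadratic tangencies with $W^s(P_i)$ along a sequence of parameters, and then invokes \cite{MV93}. Your route via the homoclinic tangency to $\CC_2$ would also work since $\CC_2$ is dissipative by \textbf{(P1)}, but you should be explicit about which saddle you are using.
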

\bigbreak
\bigbreak
The proof of Theorem \ref{tangency1} does not depend on $\omega$ and follows the same lines to the reasoning of \cite{ ChOW2013, LR2017}.  
Item (1) of Theorem \ref{tangency1} is often called the classical  Birkhoff-Melnikov-Smale horseshoe theorem. 

\begin{remark}
Points in the horseshoe stated in Theorem \ref{tangency1}  lie on the topological closure of $W^u(\CC_2)\pitchfork W^s(\CC_1)$. \end{remark}

The horseshoe whose existence is proved in Theorem \ref{tangency1} has infinitely many saddle periodic points, whose Lyapunov mutipliers' modules tend to $+\infty$ and to $0$. The next notion will be useful in the sequel.

\begin{definition}
We say that the embedding  $\mathcal{G}_{(0, \mu_2)}: \Out(\CC_2) \to \Out(\CC_2)$ exhibits \emph{non-uniform expansion} if, given $\rho>0$, for Lebesgue almost all points in $ \Out(\CC_2)$, the map   $\mathcal{G}_{(0, \mu_2)}$ is  well defined and   has a positive upper Lyapunov exponent greater than $\rho$. 
\end{definition}
 
 The next result ensures the existence of a ``large'' strange attractor for the dynamics of \eqref{general_R3} and its non-uniform expansion for most parameters.  

\begin{mainproposition}\label{thm:G}
Let $f_{(0,\mu_2)} \in\mathfrak{X}_{\Gamma}^4(\mathbb{V})$, with $\mu_2>0$. If $\omega\gg1$, then the first return map $\mathcal{G}_{(0, \mu_2)} $ associated to \eqref{general_R3} exhibits a ``large'' strange attractor with \emph{non-uniform expansion}.  
\end{mainproposition}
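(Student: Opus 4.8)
The plan is to pass to the singular limit $\omega\to\infty$, where $\mathcal{G}_{(0,\mu_2)}$ collapses onto a one–dimensional map with logarithmic singularities, to prove non‑uniform expansion at that level, and then to lift it back via the strong transverse contraction; the ``large'' property is read off from the geometry of the branches. Using the explicit first return map of Section~\ref{first return map}: since $\mu_1=0$ the transition $\Psi_{1\to2}$ is affine (no $\Phi_1$–term), while $\Psi_{2\to1}$ carries the term $\mu_2\Phi_2$ with $\Phi_2(0,\cdot)$ having at least two non‑degenerate zeros; composing the two local saddle passages — each a power‑law contraction in the stable coordinate together with a $-\tfrac{\omega}{e_i}\ln|\cdot|$ twist in $\theta$ — with $\Psi_{2\to1}$ and $\Psi_{1\to2}$ one obtains, in coordinates $(y,\theta)\in\Out(\CC_2)$,
\[
\mathcal{G}_{(0,\mu_2)}(y,\theta)=\Bigl(\kappa\,|\alpha y+\mu_2\Phi_2(y,\theta)|^{\beta};\;\;\theta-\omega K_G\ln|\alpha y+\mu_2\Phi_2(y,\theta)|+\mu_2\eta(y,\theta)+\gamma_\omega\Bigr),
\]
where $\beta=c_1c_2/(e_1e_2)>1$ by \textbf{(P1)}, $\kappa,\alpha>0$ and $\gamma_\omega\in\RR$ are constants, $\eta$ is $C^1$, and $K_G$ is as in \eqref{formulas3.1}. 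Since $\beta>1$ the first coordinate is a uniform, strong contraction on the set where the return is defined (namely $\{\alpha y+\mu_2\Phi_2>0\}$, which does not cross $W^s(\CC_1)$), so $\mathcal{G}_{(0,\mu_2)}$ has a thin absorbing annulus $N\subset\Out(\CC_2)$ about $\{y=0\}$ carrying an invariant foliation by almost‑vertical curves contracted by a uniformly small factor; quotienting $N$ by this foliation yields the one‑parameter family of circle maps (well defined off the finite zero set of $\Phi_2$, writing $\Phi_2(\theta)\equiv\Phi_2(0,\theta)$ as in Remark~\ref{remark 2.3})
\[
\Theta_\omega(\theta)=\theta-\omega K_G\ln|\Phi_2(\theta)|+\gamma_\omega\pmod{T},
\]
which is the singular limit of the (rescaled) family; for $\omega\gg1$ the map $\mathcal{G}_{(0,\mu_2)}$ is $C^2$‑close, off the singular fibres, to $\Theta_\omega$ composed with the projection along the foliation.

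Next I would analyse $\Theta_\omega$. It is $C^3$ off the zeros of $\Phi_2$, at which $|\Theta_\omega'|,|\Theta_\omega''|\to\infty$ (integrable, expanding singularities); its turning points are finite in number and $O(1/\omega)$‑close to the critical points of $\Phi_2$, where they are non‑degenerate with $|\Theta_\omega''|$ of order $\omega$; and $|\Theta_\omega'(\theta)|\ge \mathrm{const}\cdot\omega$ off an $O(1/\omega)$‑neighbourhood of the turning points. Thus $\Theta_\omega$ is a Misiurewicz‑type map with logarithmic singularities in the sense of \cite{OW2010, TW12}, provided the standard non‑degeneracy holds: the turning‑point orbits stay uniformly away from the critical set and from the singularities, with exponential growth of $|\Theta_\omega'|$ along them. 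Establishing that condition — together with the distortion control in the cusps near the logarithmic singularities, where $|\Theta_\omega'|$ is unbounded — is the main obstacle; it should follow from the transversality encoded by the Melnikov computation of Section~\ref{s: Melnikov} (non‑degeneracy of $\Phi_2$ and of its zeros), possibly after restricting $\omega$ to a positive‑measure set of large values, and then the theory of rank‑one maps with logarithmic singularities (\cite{WY2003, OW2010, TW12}) applies; alternatively, a direct non‑uniform‑expansion argument (hyperbolic‑times estimates in the spirit of Alves--Bonatti--Viana, exploiting that the ``bad set'' near the turning points is tiny relative to the ambient expansion when $\omega\gg1$) yields a positive lower Lyapunov exponent $\rho>0$ for Lebesgue‑almost every $\theta$.

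Finally I would transfer non‑uniform expansion to $\mathcal{G}_{(0,\mu_2)}$. The uniform contraction in the $y$–direction forces $\Lambda(\mathcal{G}_{(0,\mu_2)})$ into arbitrarily thin neighbourhoods of $\{y=0\}$ and makes the expanding–direction upper Lyapunov exponent of Lebesgue‑almost every point of $\Out(\CC_2)$ equal to that of its foliation projection, hence $\ge\rho>0$, while the transverse exponent is uniformly negative; this is exactly the non‑uniform expansion defined above, and it exhibits a strange attractor (carrying, in the Misiurewicz regime, an ergodic SRB measure, with the statistical properties of \cite{WY2006}). The attractor is ``large'' in the sense of \cite{BST98} because each monotone branch of $\Theta_\omega$ between consecutive zeros of $\Phi_2$ is $\cup$‑shaped and surjects onto $\EU^1$, so its non‑wandering set, and hence that of $\mathcal{G}_{(0,\mu_2)}$, winds around the whole non‑contractible annulus.
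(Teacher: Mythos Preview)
Your approach is essentially the paper's: reduce to the one-dimensional singular limit with logarithmic singularities (the paper writes it as $h_a(\theta)=\theta+\xi+\mu_2\psi_2(\theta)-a-\omega K_G\ln\phi_2(\theta)$, your $\Theta_\omega$) and then invoke the Takahasi--Wang result \cite{TW12} to get positive Lyapunov exponent for Lebesgue-a.e.\ $\theta$; the paper's proof is in fact only a few lines, citing \cite{TW12} directly for the exponential growth $|(h_a^n)'(h_a(c))|>(\omega K_G)^{\lambda n}$ along critical orbits and the fact that the good parameter set has $\mathrm{Leb}_1(\Delta)\to 2\pi$ as $\omega\to\infty$, without the elaborate transfer-back argument you sketch. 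One terminological correction: the ``singular limit'' in this framework is not $\omega\to\infty$ but rather the collapse of the $y$-direction as the perturbation parameter $\mu_2\to 0$ along a sequence $\mu_{(a,n)}$ (exactly as in Lemma~\ref{important lemma} for $\mathcal{F}$); $\omega$ stays fixed and large, and the positive-measure restriction you anticipate is on the $a$-parameter (equivalently on $\mu_2$), not on $\omega$---your phrasing conflates the two roles, though your actual computations are consistent with the correct picture.
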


The proof of this result is proved in Section \ref{Proof_ThE}.
In particular, it is possible to construct invariant probabilities  absolutely continuous with respect to the Lebesgue measure (cf. \cite[Sec. 3]{WY2006}).

\begin{maintheorem} 
\label{mixture}
Let $f_{(\mu_1,\mu_2)} \in\mathfrak{X}_{\Gamma}^4(\mathbb{V})$, with $\mu_1,\mu_2>0$. In the bifurcation parameter $(\mu_1, \mu_2)\in [0, \varepsilon]^2$, there exists a curve $Hom$ associated to the emergence of  homoclinic cycles to $\CC_1$.  Heteroclinic tangles occurs in the convex region defined by this curve. 
\end{maintheorem}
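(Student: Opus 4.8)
The plan is to analyze the full two-parameter return map $\mathcal{G}_{(\mu_1,\mu_2)}$ (equivalently $\mathcal{F}_{(\mu_1,\mu_2)}$) near $\CC_1$ and track the relative position of the invariant manifolds $W^u(\CC_1)$ and $W^s(\CC_1)$ as $(\mu_1,\mu_2)$ vary in $[0,\varepsilon]^2$, using the explicit form of the embeddings $\Psi_{1\to2}$ and $\Psi_{2\to1}$ in \textbf{(P7a)}--\textbf{(P7b)} together with the local linearizing coordinates near $\CC_1$ and $\CC_2$ (this is where the non-resonance hypothesis \textbf{(P2)} and the precise first-return computation of Section \ref{first return map} are invoked). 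The key observation is that the $\mu_1$-term in \eqref{P7a_cond} carries $\Phi_1>0$, so it controls a \emph{signed} gap between $W^u(\CC_1)$ and $W^s(\CC_2)$ in $\In(\CC_2)$, while the $\mu_2$-term in \eqref{P7b_cond} carries $\Phi_2$ with at least two non-degenerate zeros, hence controls the \emph{transversality} of $W^u(\CC_2)$ with $W^s(\CC_1)$ in $\In(\CC_1)$. Composing the two transition maps with the local passage maps near the saddles yields, for the branch of $W^u(\CC_1)$ followed all the way around the cycle back to $\In(\CC_1)$, a displacement function $\mathcal{D}(\theta;\mu_1,\mu_2)$ measuring the distance from $W^u(\CC_1)$ to $W^s(\CC_1)$; a homoclinic cycle to $\CC_1$ appears exactly when $\mathcal{D}(\cdot;\mu_1,\mu_2)$ has a zero.

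First I would write $\mathcal{D}$ in the form $\mathcal{D}(\theta;\mu_1,\mu_2) = -A(\mu_1)\,h_1(\theta) + B(\mu_2)\,h_2(\theta) + \text{(higher order)}$, where $A(\mu_1)$ is, to leading order, proportional to $\mu_1$ times an average of $\Phi_1$ (the ``pulling apart'' contribution from \textbf{(P6a)}), $B(\mu_2)$ is proportional to $\mu_2$ (the ``pushing across'' contribution from \textbf{(P6b)}), and the exponents coming from the local passage near the saddles appear through the contraction/expansion rates $c_i,e_i$; the constants $K_F,K_G$ of \eqref{formulas3.1} record exactly those passage exponents. Because $\Phi_1$ is non-constant with non-degenerate critical points and $\Phi_2$ has at least two non-degenerate zeros, the functions $h_1,h_2$ are genuinely $\theta$-dependent, and one checks (using \textbf{(P6a)}: at $\mu_2=0$, $W^u(\CC_1)$ is pushed strictly \emph{into} $\mathbb{V}^\star$, so $\mathcal{D}<0$ there; and \textbf{(P6b)}: at $\mu_1=0$, $W^u(\CC_2)\pitchfork W^s(\CC_1)$, which forces $W^u(\CC_1)\equiv W^s(\CC_2)$ and a transverse return, so $\mathcal{D}$ changes sign in $\theta$ for $\mu_2>0$). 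Hence for each fixed small $\mu_2>0$, as $\mu_1$ decreases from $\varepsilon$ to $0$ the minimum over $\theta$ of $\mathcal{D}(\theta;\mu_1,\mu_2)$ passes from negative (large $\mu_1$ dominates, everything pushed inside) to attaining $0$; an intermediate value / implicit function argument then produces, for each $\mu_2$, a value $\mu_1 = \phi(\mu_2)$ at which $\min_\theta \mathcal{D} = 0$ with the zero non-degenerate, i.e. a homoclinic cycle to $\CC_1$ is born. The set $Hom = \{(\phi(\mu_2),\mu_2) : \mu_2\in(0,\varepsilon]\}$ is the desired curve; smoothness of $\phi$ follows from the implicit function theorem once non-degeneracy of the relevant critical point of $\mathcal{D}$ is established, and one reads off that on the side of $Hom$ where $\mathcal{D}$ takes both signs in $\theta$ (the ``convex region'' bounded by $Hom$, namely smaller $\mu_1$ relative to $\mu_2$) one has $W^u(\CC_1)\cap W^s(\CC_1)\neq\emptyset$ transversally, producing a heteroclinic tangle (a Birkhoff--Smale horseshoe accumulating on $\CC_1$, exactly as in item (1) of Theorem \ref{tangency1}).

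The main obstacle I anticipate is bookkeeping the interaction of the two perturbation terms through the \emph{local} passage maps near $\CC_1$ and $\CC_2$: unlike the one-parameter situations treated in \cite{ChOW2013} and \cite{Mohapatra}, here the $\mu_1$-controlled gap at $\In(\CC_2)$ gets transported by the (strongly contracting, by \textbf{(P1)}) local map near $\CC_2$ and then composed with the $\mu_2$-controlled transition, so the orders of magnitude in $\mu_1$ and $\mu_2$ that actually compete in $\mathcal{D}$ are not simply linear — they involve the exponents $c_1/e_1$, $c_2/e_2$ appearing in $K_F, K_G$, and one must be careful that the higher-order remainder is genuinely dominated uniformly in $\theta$ and in $\omega$ (the statement asserts $\omega$-independence). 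Establishing the non-degeneracy of the critical point of $\theta\mapsto\mathcal{D}(\theta;\mu_1,\mu_2)$ along $Hom$ — needed both for the implicit function theorem and for ``generic/quadratic'' tangency — is the technical heart; I would handle it by showing that the leading $\theta$-dependent part of $\mathcal{D}$ inherits non-degenerate critical points from $\Phi_1$ (via \textbf{(P7a)}) perturbed by a term inheriting simple zeros from $\Phi_2$ (via \textbf{(P7b)}), so that for generic $(\mu_1,\mu_2)$ near $Hom$ the sum still has only non-degenerate critical points, and the region where its minimum is negative is convex because $\mathcal{D}$ is, to leading order, affine in the parameters $(A(\mu_1),B(\mu_2))$ with $A,B$ monotone — the convexity claim in the theorem then reduces to convexity of a sublevel set of an affine-in-parameters family.
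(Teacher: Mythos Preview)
Your overall strategy --- track the image of $W^u_{\loc}(\CC_1)$ once around the cycle back to $\In(\CC_1)$ and locate the parameter set where it touches $W^s_{\loc}(\CC_1)=\{y_2^{(1)}=0\}$ --- is exactly what the paper does. But the paper carries it out by direct substitution into the maps already computed in Section~\ref{first return map}: starting from $(0,\overline\theta^{(1)})\in\Out(\CC_1)$, one applies $\Psi_{2\to1}\circ Loc_2\circ\Psi_{1\to2}$ and reads off
\[
y_2^{(1)}=b_2\,\frac{\mu_1^{\delta_2}}{\|\mu\|}\,\phi_1(0,\theta^{(1)})^{\delta_2}+\frac{\mu_2}{\|\mu\|}\,\phi_2\bigl(\overline y_1^{(2)},\overline\theta^{(2)}\bigr),
\]
so that $Hom$ is given by the balance $b_2\mu_1^{\delta_2}C_1+\mu_2C_2=0$, i.e.\ $|\mu_2|=C|\mu_1|^{\delta_2}$ for constants $C,C_1,C_2>0$. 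Above this curve the displacement changes sign in $\theta$ and one has transverse homoclinic intersections, hence the tangle.

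There are two concrete errors in your proposal. First, your leading-order ansatz $\mathcal D\sim -A(\mu_1)h_1+B(\mu_2)h_2$ with $A(\mu_1)\propto\mu_1$ is wrong: the $\mu_1\Phi_1$-gap produced in $\In(\CC_2)$ is transported through the local passage $Loc_2$, which raises the radial coordinate to the power $\delta_2=c_2/e_2$ (see \eqref{local_map1}--\eqref{delta_i}), so the contribution is $\mu_1^{\delta_2}\phi_1^{\delta_2}$, not $\mu_1\phi_1$. The constants $K_F,K_G$ that you invoke are the \emph{angular} time-of-flight coefficients; the relevant radial passage exponent here is $\delta_2$, and confusing the two is precisely the ``bookkeeping obstacle'' you flag but do not resolve. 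Second, your convexity argument --- ``affine in $(A(\mu_1),B(\mu_2))$ with $A,B$ monotone, hence convex sublevel set'' --- does not work: an affine relation in $(A,B)$ pulls back to a convex region in $(\mu_1,\mu_2)$ only if $A$ and $B$ are themselves affine (or at least concave/convex in the right combination). In the paper the convexity of the tangle region is immediate from the explicit form $\mu_2=C\mu_1^{\delta_2}$ together with $\delta_2>1$, which is exactly the dissipativity hypothesis \textbf{(P1)}. Without identifying the correct exponent $\delta_2$ you cannot conclude convexity, nor the prevalence statement following the theorem (that the tangle region has full density at the origin in $[0,\varepsilon]^2$).
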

\bigbreak

\begin{figure}[ht]
\begin{center}
\includegraphics[height=5.5cm]{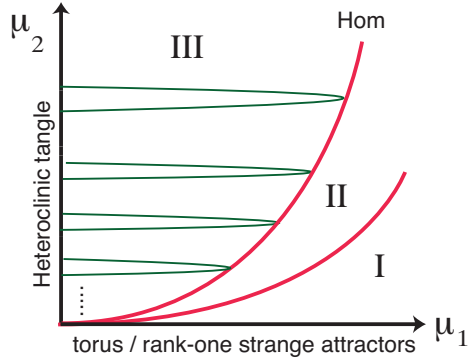}
\end{center}
\caption{\small Plausible bifurcation diagram associated to an element $f_{(\mu_1,\mu_2)}$ of the family $\mathfrak{X}_{\Gamma}^4(\mathbb{V})$.  I -- the flow has an invariant two-dimensional torus if $\omega\approx 0$ and a rank-one strange attractor if $\omega \gg 1$. II -- transition region;  III -- heteroclinic tangle; Hom -- curve that corresponds to the emergence of a homoclinic tangency associated to $\CC_1$. }
\label{bd1}
\end{figure}

The proof of Theorem \ref{mixture} is performed in Section \ref{Proof_ThF}. As suggested in Figure \ref{bd1}, for $\varepsilon>0$ small and $r\in [0,\varepsilon]$, defining $$B_r=\{(\mu_1, \mu_2)  \in [0, \varepsilon]\times [0, \varepsilon]: \mu_1^2+\mu_2^2\leq r^2 \},$$ we have:
$$\lim_{r \rightarrow 0}\frac{\text{Leb}_2(\{(\mu_1, \mu_2)\in [0, r]\times [0, r] : \mathcal{F}_{(\mu_1, \mu_2)} \text{  exhibits heteroclinic tangles}\}\cap B_r)}{\text{Leb}_2(B_r)} =1,$$
where $\emph{Leb}_2$ denotes the usual two-dimensional Lebesgue measure. This is why we say that non-uniform hyperbolicity is a \emph{prevalent phenomena} in the problem under consideration. 

\section{Preliminaries: strange attractors and SRB measures}
\label{s:definitions}
In this section, we gather a collection of technical facts used repeatedly in later sections.
We formalize the notion of strange attractor  for a two-parametric family of diffeomorphisms $H_{(a,b)}$ defined on  $M=     [0,1] \times \EU^1$,  endowed with the induced topology. The set $M$ is also called by \emph{circloid} in \cite{PPS}. In what follows, if $A\subset M$,   $\overline{A}$ denotes its topological closure.

\medbreak
Let $H_{(a,b)}$ be an embedding  such that $H_{(a,b)} (\overline{U} )\subset U$ for some open set $U\subset M$. In the present article, we refer to 
$$
{\Omega} =\bigcap_{m=0}^{+\infty}  H_{({a},b)}^m(\overline{U}).
$$
as an \emph{attractor} and $U$ its \emph{basin}. The attractor $\Omega$ is \emph{irreducible} if it cannot be written as the union of two (or more) disjoint attractors.
\medbreak
   
\begin{definition}
The embedding $H_{(a,b)}$ is said to have a \emph{horseshoe with infinitely many branches}  if %
there exists an invariant subset   $ \Sigma\subset  U$ on which $H_{(a,b)} |_\Sigma$ is topologically conjugated to a full shift of infinitely many symbols.
 \end{definition}

\begin{definition}
We say that $ H_{(a,b)}$ possesses a \emph{strange attractor supporting an ergodic SRB measure} $\nu$ if:
\begin{itemize}
\item for Lebesgue almost all $(y,\theta) \in U$, the $ H_{(a,b)}$--orbit of $(y,\theta)$ has a positive
Lyapunov exponent, \emph{ie}
$$
\lim_{n \in \NN} \frac{1}{n}\log  \|D  H_{({a},b)} ^n(y,\theta) \|>0;
$$

\item $ H_{({a},b)}$ admits a unique ergodic SRB measure (with no-zero Lyapunov exponents) \cite{WY};

\item  for Lebesgue almost all points $(y,\theta)\in U$ and  for every continuous test function $\varphi : U\rightarrow \RR$, we have:

\begin{equation}
\label{limit2}
\lim_{n\in \NN} \quad \frac{1}{n} \sum_{i=0}^{n-1} \varphi \circ  H_{({a},b)}^i(y,\theta) = \int  \varphi \, d\nu.
\end{equation}

\end{itemize}
\end{definition}

Admitting that $H_{({a},b)}$ admits a unique ergodic SRB measure $\nu$, we define convergence of  $H_{({a},b)}$ with respect to $\nu$ as follows:

\begin{definition}
We say that:
\begin{itemize}
\item $H_{({a},b)}$ converges  (in distribution with respect to $\nu$) to the normal distribution if, for every H\"older continuous function $\varphi: U \rightarrow \RR$, the sequence $\left\{\varphi\left( H_{( {a},b)}^i\right) : i\in \NN \right\}$ obeys a \emph{central limit theorem};  \emph{ie}, if $\dpt \int \varphi \, d \nu = 0$ then the sequence 
$
  \left(\frac{1}{\sqrt{m}} \dpt \sum_{i=0}^{m-1}\varphi \circ  H_{( {a},b)}^i\right)_m $ converges in distribution (with respect to $\nu$) to the \emph{normal distribution}.
 
\item the pair $(H_{({a},b)}, \nu)$ is \emph{mixing} if it is isomorphic to a Bernoulli shift. 

\end{itemize}
\end{definition}
We address the reader for \cite{WY} for more information on the subject. 

\section{Rank-one attractors' theory revisited}
\label{s: theory1}
To make a self-contained presentation, we provide an exposition of the theory of rank-one attractors adapted to our purposes.  We hope this saves the reader the trouble of going through the entire length of \cite{WO, WY} to achieve a complete description of the theory. 
\medbreak
  In what follows, let us denote by $C^3(\EU^1,\RR) $ the set of $C^3$--maps from $\EU^1$ (unit circle) to $\RR$. For $h\in  C^3(\EU^1,\RR) $, let 
$$C \equiv C(h)= \{\theta \in \EU^1 : h'(\theta) = 0\}$$ 
be the \emph{critical set} of $h$. For $\delta>0$, let $C_\delta$ be the $\delta$--neighbourhood of $C$ in $\EU^1$ and let $C_\delta$ be the $\delta$--neighbourhood of $\theta\in C$ as illustrated in Figure \ref{misiurewicz1}. The terminology \emph{dist} denotes the euclidian distance on $\RR$.

\subsection{Misiurewicz-type map}
\label{Misiurewicz-type map}
We say that $h\in  C^3(\EU^1,\RR) $ is a \emph{Misiurewicz-type map} (and we denote it by $h\in \mathcal{E}$) if the following assertions hold:
\bigbreak
\begin{enumerate}
\item There exists $\delta_0>0$ such that: \\
\begin{enumerate}
\item $\forall \theta \in C_{\delta_0}$, we have $h''(\theta)\neq 0$ and\\
\item $\forall \theta \in C$ and $n\in \NN$, $dist(h^n(\theta), C)\geq \delta_0$.\\
\end{enumerate}

\bigbreak

\item There exist constants $b_0, \lambda_0 \in \RR^+$ such that for all $\delta<\delta_0$ and $n\in \NN$, we may write: \\
\begin{enumerate}
\item if $h^k(\theta)\notin C_\delta$ for $k\in\{0, ,..., n-1\}$, then $|(h^n)'(\theta)|\geq b_0\,  \delta\,  \exp(\lambda_0\, n)$ and\\
\item  if $h^k(\theta)\notin C_\delta$ for $k\in\{0, ,..., n-1\}$ and $h^n(\theta)\in C_{\delta_0} $, then $|(h^n)'(\theta)|\geq b_0\,     \exp(\lambda_0\, n)$.\\
\end{enumerate}
\bigbreak
\end{enumerate}

Maps in $\mathcal{E}$ are among the simplest with non-uniform expansion. For $\delta_0>0$,  the set $C_{\delta_0}$ induces a partition on $\EU^1$, \emph{ie} the space $\EU^1$ may be divided in $C_{\delta_0}$ and $\EU^1\backslash C_{\delta_0}$.

\subsection*{Digestive remarks about Misiurewicz-type maps} 
\bigbreak
\begin{enumerate}
\item The critical orbits  stay a fixed distance away from the critical set $C$; \\
\item The derivatives grow at a uniform exponential rate (up to a prefactor) along orbits that remain outside $C_\delta$;\\
\item For $\theta \in C_\delta \backslash C$, although $|h'(\theta)|$ is small, the orbit of $\theta \in \EU^1$ does not return to $C_\delta$ again until its derivative has regained an ``amount'' of exponential growth.
\end{enumerate}

\begin{figure}[ht]
\begin{center}
\includegraphics[height=6.9cm]{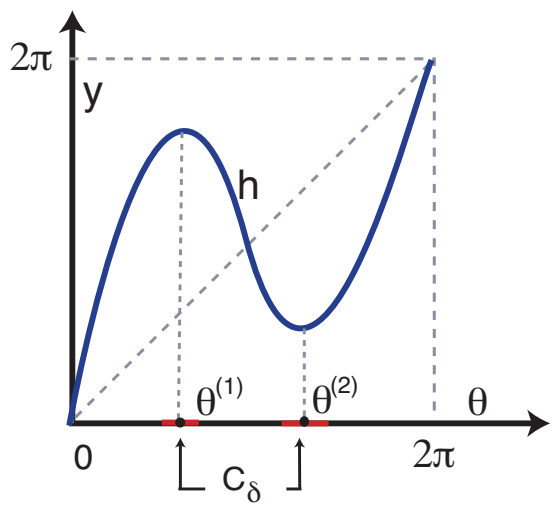}
\end{center}
\caption{\small  Example of a Misiurewicz-type map $h_a:\EU^1 \rightarrow \RR$. For $\delta>0$, the set  $C_{\delta}$  is a $\delta$-neighbourhood of the set of critical points $C$.    }
\label{misiurewicz1}
\end{figure}

\subsection{Admissible family}
We recall the notation and main results of \cite{WY2006}.
Let $$H : [0,2\pi] \times \EU^1 \rightarrow [0,2\pi]$$ be a $C^3$ map. The map $H$ defines a one-parameter family of maps
 $$\{h_a \in C^3([0,2\pi],[0,2\pi]) : a \in \EU^1\}$$ via $h_a(x) = H(x,a)$. We assume that there exists $a^\star \in \EU^1$ such that $h_{a^\star} \in \mathcal{E}$ (\emph{ie} $h_{a^\star}$  is a Misiurewicz map). For each $c \in C (h_{a^\star})$, there exists a continuation $c(a) \in C (h_a)$ provided $a$ is sufficiently close to $a^\star$. Therefore, for $a$ close to $a^\star$, let $C(h_{a^\star}) = \left\{c^{(1)}(a^\star), .... , c^{(q)}(a^\star)\right\}$, where
$$
\forall i \in \{1, ..., q-1\}, \qquad c^{(i)}(a^\star)<c^{(i+1)}(a^\star) \qquad \text{and} \qquad c^{(q+1)}(a^\star) = c^{(1)}(a^\star).
$$

From now on, when there is no risk of misunderstanding, we omit the dependence on $a^\star$ and the superscript $(i)$ in order to simplify the notation. 
For $c(a^\star) \in  C(h_{a^\star})$ we denote $$\beta_c(a^\star) = h_{a^\star} (c(a^\star)).$$ For all parameters $a$ sufficiently close to $a^\star$, there exists a unique continuation $\beta_c(a)$ of $\beta_c(a^\star)$ such that the orbits
$$
\{h^n_{a^\star}(\beta_c(a^\star)): n\in \NN\}\quad \text{and}\quad \{h^n_{a}(\beta_c(a)): n\in \NN\}
$$
have the same itineraries with respect to the partitions of $[0,1]$ induced by $C(h_{a^\star} )$ and $C(h_a)$, respectively. This means that:
$$
\forall n\in \NN, \qquad (h^n_{a^\star}(\beta_c(a^\star)) \in \left( c^{(j)} (a^\star),  c^{(j+1)} (a^\star))   \right)\quad  \Leftrightarrow \quad (h^n_{a}(\beta_c(a )) \in \left( c^{(j)} (a ),  c^{(j+1)} (a ))  \right),
$$
for $j\in \{1,...,q\}$.  In addition:
 
 \begin{lemma}[\cite{WY2006}] The map $a \mapsto \beta_c(a)$ is differentiable.
\end{lemma}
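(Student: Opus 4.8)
The plan is to show that the defining relation for $\beta_c(a)$ can be solved by an implicit-function argument, treating the ``same itinerary'' condition as forcing $\beta_c(a)$ to lie on a stable fiber associated to the orbit of $\beta_c(a^\star)$. First I would set up the framework as follows. Since $h_{a^\star} \in \mathcal{E}$, part (1)(b) of the Misiurewicz condition gives $\operatorname{dist}(h_{a^\star}^n(c(a^\star)), C) \geq \delta_0$ for all $n$ and every $c \in C(h_{a^\star})$; hence, for each critical value $\beta_c(a^\star) = h_{a^\star}(c(a^\star))$, the forward orbit $\{h_{a^\star}^n(\beta_c(a^\star)) : n \geq 0\}$ stays outside $C_{\delta_0}$ (note $\beta_c(a^\star) = h_{a^\star}(c(a^\star))$ is the image of the very next iterate). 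Consequently part (2)(a) applies with $\delta = \delta_0$ and yields uniform exponential expansion along this orbit: $|(h_{a^\star}^n)'(\beta_c(a^\star))| \geq b_0\, \delta_0\, e^{\lambda_0 n}$.

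Next I would characterize $\beta_c(a)$ as a solution of a fixed-point/contraction problem. For $a$ near $a^\star$ and a candidate point $z$ near $\beta_c(a^\star)$, the requirement that $\{h_a^n(z)\}$ shadow $\{h_{a^\star}^n(\beta_c(a^\star))\}$ with the same itinerary pins down $z$ uniquely, because the orbit of $\beta_c(a^\star)$ is uniformly expanding and stays a definite distance $\delta_0$ from the critical set: the uniqueness is exactly the uniqueness of a point whose entire forward orbit remains in a thin tube around a hyperbolic (expanding) orbit, which is the graph-transform / shadowing argument. Concretely, define $\Theta(z,a) = \bigl(h_a^{n}(z) - h_{a^\star}^n(\beta_c(a^\star))\bigr)_{n\geq 1}$ as an element of a suitable weighted sequence space (weights $e^{-\lambda n}$ with $0 < \lambda < \lambda_0$, so that the expanding orbit becomes a contraction when we solve ``backwards''), and observe $\Theta(\beta_c(a^\star), a^\star) = 0$. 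The point $\beta_c(a)$ is the unique $z$ with $\Theta(z,a)$ bounded and small in this norm, i.e. the unique bounded orbit of the non-autonomous system in the tube; one then applies the uniform implicit function theorem (or the standard hyperbolic persistence-of-orbits lemma), using that $D_z\Theta$ at the base point is invertible precisely because of the uniform expansion estimate from the previous paragraph. Since $H$ is $C^3$ and in particular $C^1$ jointly in $(x,a)$, the map $a \mapsto \beta_c(a)$ obtained this way is $C^1$, hence differentiable, which is the assertion.

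The main obstacle is making the shadowing/implicit-function step rigorous uniformly in $n$ while keeping control that the shadowing orbit never enters $C_\delta$, so that the expansion estimate from \textbf{(2)(a)} remains available along the whole orbit; one must choose the tube radius small enough (depending on $\delta_0$, $b_0$, $\lambda_0$) that an orbit $\delta_0/2$-close to $\{h_{a^\star}^n(\beta_c(a^\star))\}$ still avoids $C_{\delta_0/2}$, and then verify the contraction constant in the weighted norm is $<1$. Everything else — joint smoothness of the solution in $a$ from $C^3$ regularity of $H$, and the identification of the solution with the itinerary-preserving continuation $\beta_c(a)$ — is routine once the hyperbolic persistence lemma is in place. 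Alternatively, one can invoke the abstract result of \cite{WY2006} directly, since this lemma is quoted verbatim from there; the sketch above merely recalls why it is true.
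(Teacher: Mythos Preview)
The paper does not prove this lemma at all: it is stated with the attribution \cite{WY2006} and no argument is supplied, the author simply uses it as input for Definition~\ref{def:admissible}. So there is no ``paper's own proof'' to compare against.

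Your sketch is the standard (and correct) argument behind the cited result: the Misiurewicz condition forces the forward orbit of $\beta_c(a^\star)$ to stay at distance $\geq \delta_0$ from $C$, hence it is uniformly expanding by \textbf{(2)(a)}, and then a shadowing/implicit-function step produces the itinerary-preserving continuation $\beta_c(a)$ with $C^1$ dependence on $a$ (using the joint $C^3$ regularity of $H$). You also correctly flag the only genuine technical point, namely choosing the tube radius so that the shadowing orbit stays outside $C_{\delta_0/2}$ and the expansion estimate persists. Since you already note that one may alternatively invoke \cite{WY2006} directly, your proposal is in fact strictly more informative than what appears in the present paper.
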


The previous lemma will be implicitly used in the next definition.

\begin{definition}
\label{def:admissible}
Let $H : [0,1] \times \EU^1 \rightarrow [0,1]$ be a $C^3$ map. The associated one-parameter family $\{h_a : a \in  \EU^1\}$ is \emph{admissible} if: \\ 
\begin{enumerate}
\item there exists $a^\star \in \EU^1$ such that $h_{a^\star} \in \mathcal{E}$; \\
\item for all $c\in C(h_{a^\star})$, we have
$$
\xi(c)=\frac{d}{da} (h_a(c(a)-\beta_c(a))\big|_{a=a^\star} = \frac{d}{da} (h_a(c(a)- h_{a^\star} (c(a^\star))\big|_{a=a^\star}  \neq 0.
$$
\end{enumerate}
\end{definition}

\subsection{Rank-one maps}
\label{rank_one}

\bigbreak

Let $M= [0,2\pi]\times \EU^1$, induced with the usual topology. We consider the two-parameter family of maps $H_{(a,b)}: M \rightarrow M$, where $a \in\EU^1$ and $b \in \RR$ is a scalar. Let $B_0 \subset \RR\backslash\{0\}$ with  0 as an accumulation point; this will be a crucial point in order to prove our results in Subsection \ref{ss: singular limit}. We assume the following conditions:
\bigbreak

\begin{description}
\item[\text{(H1) Regularity conditions}]  
\begin{enumerate}  
\item For each $b\in B_0$, the function $(x,y,a)\mapsto H_{(a,b)}$ is at least $C^3$--smooth.
\item Each map $H_{(a,b)}$ is an embedding of $M$ into itself.
\item There exists $k\in \RR^+$ independent of $a$ and $b$ such that for all $a \in  \EU^1$, $b\in B_0$ and $(y_1, \theta_1), (y_2, \theta_2) \in M$, we have:
$$
\frac{|\det DH_{(a,b)}(y_1, \theta_1)|}{|\det DH_{(a,b)}(y_2, \theta_2)|} \leq k.
$$
\end{enumerate}

\bigbreak
\item[\text{(H2) Existence of a singular limit}] 
For $a\in \EU^1$, there exists a map $$H_{(a,0)}: M \rightarrow  \{0\}\times \EU^1$$ such that the following property holds: for every $(y, \theta) \in M$ and $a\in [0,2\pi]$, we have
$$
\lim_{b \rightarrow 0} H_{(a,b)}(y, \theta) = H_{(a,0)}(y, \theta).
$$

\bigbreak
\item[\text{ (H3) $C^3$--convergence to the singular limit}]   For every choice of $a\in \EU^1$, the maps $(y, \theta, a) \mapsto H_{(a,b)}$  converge in the $C^3$--topology to $(y, \theta, a) \mapsto H_{(a,0)}$ on $M\times\EU^1$ as $b$ goes to zero.

\bigbreak
\item[\text{(H4) Existence of a sufficiently expanding map within the singular limit}]

The\-re exists $a^\star \in \EU^1$  such that $h_{a^\star}(\theta)\equiv H_{(a^\star, 0)}(0, \theta)$ is a Misiurewicz-type map.
\bigbreak
\item[\text{(H5) Parameter transversality}]  Let $  C_{a^\star}$ denote the critical set of a Misiu\-rewicz-type map $h_{a^\star}$. For each $x\in C_{a^\star}\equiv C(h_{a^\star})$, let $p = h_{a^\star}(x)$, and let $ {x(\tilde{a})}$ and $ {p(\tilde{a})}$ denote the 
continuations of $x$ and $p$, respectively, as the parameter $a$ varies around $a^\star$. The point $ {p(\tilde{a})}$  is the unique point such that $ {p(\tilde{a})}$  and $p$ have identical symbolic itineraries under $h_{a^\star}$ and $h_{\tilde{a}}$, respectively. We have:
$$
\frac{d}{da} h_{\tilde{a}}(x(\widetilde{a}))|_{a=a^\star} \neq \frac{d}{da} p(\tilde{a})|_{a=a^\star}.
$$

\bigbreak

\item[\text{(H6) Nondegeneracy at turns}] For each $x\in C_{a^\star}$, we have
$$
\frac{d}{dy} H_{(a^\star,0)}(y,\theta) |_{y=0} \neq 0.
$$

\bigbreak

\item[\text{(H7) Conditions for mixing}] If $J_1, \ldots, J_r$ are the intervals of monotonicity of a Misiu\-rewicz-type map $h_{a^\star}$, then:
\medbreak
\begin{enumerate}
\item $\exp(\lambda_0/3)>2$ (see the meaning of $\lambda_0$ in Subsection \ref{Misiurewicz-type map}) and
\medbreak
\item if $Q=(q_{im})$ is the matrix of all possible transitions between the intervals of monotonicity of $h_{a^\star}$ defined by:
\begin{equation*}
\left\{
\begin{array}{l}
1 \qquad \text{if} \qquad J_m\subset h_{a^\star} (J_i)\\  
0 \qquad \text{otherwise},\\
\end{array}
\right.
\end{equation*}
then there exists $N\in \NN$ such that $Q^N>0$ (in other words,  all entries of the matrix $Q^N$, endowed with the usual product topology,  are positive).
\bigbreak
\end{enumerate}

\end{description}

\begin{remark}
\label{terminologia1}
By \textbf{(H2)}, identifying $\EU^1 \times \{0\}$ with $\EU^1$, we refer to $H_{(a,0)}$  the restriction $h_a : \EU^1 \rightarrow \EU^1$ defined by $h_a(\theta) = H_{(a,0)}(\theta, 0)$ as the \emph{singular limit} of $H_{(a,b)}$.
\end{remark}

\subsection{Wang and Young's reduction}

 The results developed in \cite{WY, WY2003} are about maps with attracting sets on which there is strong dissipation
and (in most places) a single direction of instability.  Two-parameter families
${H_{(a,b)}}$ have been considered  and it has been proved that if a singular limit makes sense (for $b=0$) and if the resulting family of one-dimensional maps has certain
``good'' properties, then some of them can be passed back to the two-dimensional system ($b > 0$). They in
turn allow us to prove results on strange attractors for a positive Lebesgue measure set
of $a$. 

Conditions \textbf{(H1)--(H7)} are simple and checkable; when satisfied, they guarantee the existence of  strange attractors with a package of statistical and geometric properties:
\medbreak
\begin{theorem}[\cite{WY}, adapted]
\label{th_review}
Suppose the family $H_{(a,b)}$ satisfies \textbf{(H1)--(H7)}. Then, for all sufficiently small $b\in  B_0$, there exists a subset $\Delta \subset [0,2\pi]$ with positive Lebesgue measure such that for $a\in \Delta$, the map $H_{({a},b)}$ admits an irreducible strange attractor $\tilde{\Omega}\subset \Omega$  that supports a unique ergodic SRB measure $\nu$. The orbit of Lebesgue almost all points in $\tilde{\Omega} $ has positive Lyapunov exponent  and is asymptotically distributed according to $\nu$. 
\end{theorem}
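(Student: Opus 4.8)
The plan is to obtain Theorem~\ref{th_review} as a direct consequence of the theory of rank-one attractors of Wang and Young \cite{WY2003, WY, WY2006}, by verifying that conditions \textbf{(H1)--(H7)} of Subsection~\ref{rank_one} are, after the obvious change of coordinates, a faithful restatement of the standing hypotheses of that theory on a two-parameter family of embeddings of an annulus admitting a one-dimensional singular limit. Concretely, I would first recall the statement of the main rank-one theorem of \cite{WY2003} together with the ergodic and statistical refinements of \cite{WY, WY2006}, and then check, line by line, that the circloid $M=[0,2\pi]\times\EU^1$, the scalar $b\in B_0$, and the circular parameter $a$ play the roles of, respectively, their phase space (interval $\times$ circle), their strength-of-contraction parameter, and their bifurcation parameter.

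The core of the argument is the following dictionary. Hypothesis \textbf{(H1)} is exactly their regularity package: uniform $C^3$-smoothness in $(x,y,a)$, the embedding property, and the bounded-distortion bound on $|\det DH_{(a,b)}|$ needed to transport Lebesgue-density estimates along orbits. Hypotheses \textbf{(H2)}--\textbf{(H3)} are the existence of, and $C^3$-convergence to, the singular limit $H_{(a,0)}\colon M\to\{0\}\times\EU^1$, which under Remark~\ref{terminologia1} we identify with the one-parameter family $\{h_a\}$ of circle maps. Hypothesis \textbf{(H4)} asserts that $h_{a^\star}\in\mathcal{E}$, i.e.\ $h_{a^\star}$ is a Misiurewicz-type map in the sense of Subsection~\ref{Misiurewicz-type map}; together with the transversality $\xi(c)\neq0$ of Definition~\ref{def:admissible}, encoded here as \textbf{(H5)} (the derivative of $h_a(c(a))-\beta_c(a)$ at $a^\star$ being nonzero, which is meaningful by the differentiability lemma of \cite{WY2006}), this is precisely their notion of an admissible one-parameter family through a sufficiently expanding map. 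Hypothesis \textbf{(H6)}, $\partial_y H_{(a^\star,0)}(0,\theta)\neq0$ on the critical set, is their nondegeneracy condition at the turns that makes the folding genuinely quadratic. Finally \textbf{(H7)(a)} ($\exp(\lambda_0/3)>2$) and \textbf{(H7)(b)} (primitivity of the transition matrix $Q$ between the monotonicity intervals of $h_{a^\star}$) are the combinatorial and quantitative conditions under which the rank-one attractor of \cite{WY, WY2006} is \emph{irreducible} (topologically mixing) and carries a Bernoulli SRB measure. With this correspondence in hand, the parameter-exclusion scheme of \cite{WY2003, WY} applies verbatim and yields a positive Lebesgue measure set $\Delta$ of parameters $a$ over which the conclusions hold; the allowance for $b$ to range only over a set $B_0\subset\RR\setminus\{0\}$ accumulating at $0$, rather than over a half-neighbourhood of $0$, is immaterial, since the conclusions of \cite{WY, WY2006} hold for every $b$ with $|b|$ sufficiently small.

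It then remains to read off the conclusions: the trapping property $H_{(a,b)}(\overline U)\subset U$ makes $\Omega=\bigcap_{m\ge0}H_{(a,b)}^m(\overline U)$ a genuine attractor in the sense of Section~\ref{s:definitions}, its irreducible piece $\tilde\Omega\subset\Omega$ is the attractor produced by the Wang--Young construction, $\nu$ is the unique ergodic SRB measure, and the positivity of the Lyapunov exponent for Lebesgue-a.e.\ orbit in $\tilde\Omega$ together with the equidistribution statement~\eqref{limit2} are literally their output (the sharper statistical properties---central limit theorem, decay of correlations, the Bernoulli property---also follow, using \textbf{(H7)}, and will be invoked in later sections). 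I expect the only genuinely non-mechanical step to be the bookkeeping in this translation: Wang and Young phrase their results on a fixed model annulus with a fixed parameter interval, so one must verify that the affine reparametrizations to $[0,2\pi]\times\EU^1$ and to $a\in\EU^1$, and the identification in Remark~\ref{terminologia1} of the singular image with the circle on which $h_a$ acts, respect every one of \textbf{(H1)--(H7)} and every quantity ($\delta_0$, $\lambda_0$, $b_0$, $Q$) occurring in them. This is routine but must be carried out with care, and it is the step I would spell out most explicitly.
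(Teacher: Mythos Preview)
The paper does not supply a proof of this theorem at all: it is stated as a result quoted (``adapted'') from \cite{WY}, and the surrounding Section~\ref{s: theory1} is explicitly a review of the Wang--Young theory rather than new work. Your proposal---to justify the statement by exhibiting \textbf{(H1)--(H7)} as a faithful translation of the standing hypotheses in \cite{WY, WY2003, WY2006} and then invoking their parameter-exclusion theorem---is therefore exactly the right (and only possible) approach, and it matches what the paper does implicitly by citation.
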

\medbreak
The  map $H_{({a},b)}$ has \emph{exponential decay of correlations} for H\"older continuous observables. The theory may be extended for $  [0,1]^{N-1}\times \EU^1$, with $N\geq 2$ \cite{WY2006}.

\subsection{Periodic attractors in singular limits of families of rank-one maps}

We now introduce the combinatorics needed to   prove Theorem \ref{th_review2}.
Let $\delta<\delta_0$ be fixed ($\delta_0>0$ is the constant coming from the definition of \emph{Misiurewicz-type map} in Subsection \ref{Misiurewicz-type map}). 
For $ 1\leq i\leq q$, let $J^{(i)}$ be a subinterval of $C^{(i)}_\delta$, the connected component of $C_\delta$ containing the critical point $c^{(i)}$, and assume that there exist $n = n(i)$ and $j = j(i)$ associated to $J^{(i)}$ such that: \\

\begin{enumerate}
\item $h^k(J^{(i)})\cap C_\delta =\emptyset$ for all $0<k<n$ and \\
\item $h^n(J^{(i)})= C_\delta^{(j)}$.\\
\end{enumerate}

In other words, we have: \\

\begin{description}
\item[$q$] number of connected components of $\EU^1\backslash C_\delta$ ($q\geq 1$); \\
\item[$n(i)$] number of interactions needed to $J^{(i)}$ to intersect the critical set; \\ 
\item[$j(i)$] label of the connected component of the critical set intersected by $h^{n(i)}(J^{(i)})$.\\
\end{description}

\bigbreak
Now, for $\delta>0$ fixed, define the collection:

$$
J_\delta =\left\{  \left(J^{(i)}, n(i), j(i)\right): 1\leq j\leq q\right\}
$$
\bigbreak
We associate a \emph{directed graph} $\mathcal{P}(J_\delta )$ with $J_\delta $ as follows: \\
\begin{itemize}
\item the graph $\mathcal{P}(J_\delta )$  contains $q$ vertices $v_1,..., v_q$
 representing $c_1,..., c_q$;\\
 \item there exists a directed edge from $v_i$ to $v_\ell$ in $\mathcal{P}(J_\delta )$ if and only if $j(i) = \ell$.\\
\end{itemize}

\noindent According to \cite{OW2010}, we define the concept of completely accessible vertex.
\begin{definition}
 We say that a vertex $v_{i_0}$ in  $\mathcal{P}(J_\delta )$  is \emph{completely accessible} if for every $ 1\leq i\leq q$, there
exists a directed path from $v_i$ to $v_{i_0}$  in the graph $\mathcal{P}(J_\delta )$. 
\end{definition}

For fixed $\lambda<\lambda_0/5$ and   $\alpha>0$ small, let  $\Delta(\lambda, \alpha)$ be the set of $a\in \EU^1$ for which the following conditions hold for a critical point $c\in C\equiv C(h_a)$:\\
\begin{description}
\item[\textbf{(CE1)}]   $dist(h_a^n(c), C ) \geq \min\{\delta_0/2,  \exp (-\alpha n)\}$; \\
\item[\textbf{(CE2)}]    $|(h_a^n)'(h_a(c))|\geq 2b_0\delta_0 \exp (\lambda n)$.\\
\end{description}
These assertions are usually called by $(\lambda, \alpha)$--\emph{Collet-Eickmann conditions}. Next lemma says that, in the $C^3$--topology, if $a^\star \in \EU^1$ is such that $h_{a^\star}\in \mathcal{E}$, we may ``easily" find other values close to $a^\star$ for which $h_a\in \mathcal{E}$. \bigbreak
\begin{lemma}[\cite{WY2006}, adapted]
 If $a^\star\in \EU^1$ is such that $h_{a^\star}\in \mathcal{E}$ then 
 $$
 \liminf_{r\rightarrow 0^+}\frac{\emph{Leb}_1(\Delta(\lambda, \alpha)\cap [a^\star-r,a^\star+r])}{2r}>0,
 $$
 where $\emph{Leb}_1$ denotes the usual one-dimensional Lebesgue measure.
 \end{lemma}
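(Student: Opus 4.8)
The plan is to establish the Lemma by combining two ideas: first, that maps in $\mathcal{E}$ satisfy the $(\lambda,\alpha)$--Collet--Eckmann conditions \textbf{(CE1)--(CE2)} with room to spare (since $\lambda<\lambda_0/5$ and $\alpha$ is small, while the Misiurewicz estimates of Subsection \ref{Misiurewicz-type map} give exponential growth at rate $\lambda_0$ and the critical orbits stay a \emph{fixed} distance $\delta_0$ from $C$), and second, a parameter-exclusion/large-deviation argument showing that the set of $a$ violating \textbf{(CE1)} or \textbf{(CE2)} near $a^\star$ has density strictly less than $1$. This is essentially the argument of Benedicks--Carleson / Tsujii for one-dimensional maps, transcribed to the circle; since the statement is quoted as ``\cite{WY2006}, adapted'', the proof I would write is a guided tour of that argument rather than a from-scratch construction.

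First I would fix notation: write $\beta_c(a) = h_a(c(a))$ for the continuation of the critical value (differentiable in $a$ by the preceding lemma), and for each $n$ introduce the parameter sets
$$
E_n^{(1)} = \{ a : \dist(h_a^n(c(a)), C) < \min\{\delta_0/2, \exp(-\alpha n)\}\}, \qquad E_n^{(2)} = \{a : |(h_a^n)'(\beta_c(a))| < 2b_0\delta_0\exp(\lambda n)\},
$$
so that $\EU^1\setminus\Delta(\lambda,\alpha) = \bigcup_n (E_n^{(1)}\cup E_n^{(2)})$. The key technical input is the \emph{bounded distortion / parameter-phase comparison}: along the stretch of parameters where the critical orbit has not yet made a deep return, the map $a\mapsto h_a^n(c(a))$ has derivative comparable (up to a bounded factor) to the phase-space derivative $(h_a^n)'(\beta_c(a))$, which by the Misiurewicz expansion estimate is at least $b_0\delta\exp(\lambda_0 n)$ on any orbit avoiding $C_\delta$. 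This is where hypothesis \textbf{(H5)}/admissibility (the transversality $\xi(c)\neq 0$) enters, giving the initial non-degeneracy that lets the comparison start. Then the measure of the ``bad'' parameters making a return closer than $\exp(-\alpha n)$ at time $n$ is, by this comparison, at most of order $\exp(-\alpha n)/(b_0\delta\exp(\lambda_0 n)) = O(\exp(-(\lambda_0+\alpha)n))$, which is summable; similarly a standard induction on return times controls $E_n^{(2)}$ because each deep return of depth $\sim\exp(-\alpha k)$ costs a derivative factor $\exp(-\alpha k)$ that is recovered at exponential rate $\lambda_0$ before the next return (digestive remark (3) of Subsection \ref{Misiurewicz-type map}), so the accumulated loss stays below $\exp((\lambda_0/5)n)$ and thus $|(h_a^n)'|\ge 2b_0\delta_0\exp(\lambda n)$ survives.

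Consequently I would bound
$$
\operatorname{Leb}_1\bigl((\EU^1\setminus\Delta(\lambda,\alpha))\cap [a^\star-r, a^\star+r]\bigr) \le \sum_{n\ge N_0} C\exp(-\gamma n) \cdot (\text{something} \le 2r) \le \tfrac12 \cdot 2r
$$
for $r$ small enough, using that for large $n$ the relevant parameter intervals are tiny and for small $n<N_0$ one can check directly (shrinking $r$) that $a^\star$ itself, being in $\mathcal{E}$, forces the finitely many conditions with a definite margin, hence they hold on a full-density neighbourhood. Taking $r\to 0^+$ then gives $\liminf_{r\to 0^+}\operatorname{Leb}_1(\Delta(\lambda,\alpha)\cap[a^\star-r,a^\star+r])/(2r) \ge 1/2 > 0$, which is the assertion.

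The main obstacle is the bounded-distortion / parameter-vs-phase derivative comparison: making precise that $\frac{d}{da}h_a^n(c(a))$ and $(h_a^n)'(\beta_c(a))$ stay within a uniformly bounded ratio requires carefully controlling the ``derivative of the critical point'' term $c'(a)$ and the contributions of the non-autonomous dependence on $a$ along the orbit, exactly as in the Benedicks--Carleson scheme — this is the heart of the matter and the only place where serious work is hidden. Everything else (summability of the exclusion estimates, the finite-$n$ initial segment, passing to the $\liminf$) is bookkeeping. Since the statement is explicitly borrowed and ``adapted'' from \cite{WY2006}, I would present this as a sketch with the distortion estimate cited from there and the exclusion counting reproduced in enough detail to make the paper self-contained.
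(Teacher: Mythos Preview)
The paper does not supply its own proof of this lemma: it is stated as a citation (``\cite{WY2006}, adapted'') and used as a black box in the lead-up to Theorem~\ref{th_review2}. So there is no in-paper argument to compare against; the authors simply import the positive-density-of-good-parameters result from Wang--Young.

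Your sketch is a faithful outline of the Benedicks--Carleson/Tsujii/Wang--Young parameter-exclusion scheme that underlies that reference, and the ingredients you identify --- the parameter/phase-space derivative comparison driven by the transversality $\xi(c)\neq 0$, the summable exclusion of close-return times via the exponential gap between $\lambda_0$ and $\lambda<\lambda_0/5$, and the direct verification of the finitely many initial conditions near $a^\star$ --- are exactly the right ones. One small caution: the lemma as stated in the paper hypothesises only $h_{a^\star}\in\mathcal{E}$, but the argument you outline (and the result in \cite{WY2006}) needs the ambient family to satisfy the transversality condition of Definition~\ref{def:admissible}; you correctly invoke this, but you should make explicit that admissibility is part of the standing hypotheses, not something derived from $h_{a^\star}\in\mathcal{E}$ alone. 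With that clarified, your proposal is a correct expansion of what the paper leaves to the citation.
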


 \bigbreak

\begin{theorem}[\cite{OW2010}, adapted]
\label{th_review2}
 
 Let $\{h_a : a \in \EU^1\}$ be an admissible family and let $a^\star \in \EU^1$ be such that $h_{a^\star} \in \mathcal{E}$. Fix $\lambda<\lambda_0/5$. Then for $\alpha < \lambda$ sufficiently small, there exists $\delta_1 > 0$ sufficiently small such that the following holds. If $h_{a^\star}$ admits a collection $J_\delta $ such that the directed graph $\mathcal{P}(J_\delta)$ has a completely accessible vertex for some $\delta<\delta_1$, then for every $\hat{a} \in  \Delta(\lambda, \alpha)$ sufficiently close to $a^\star$, there exists a sequence $a_n$ converging to $\hat{a}$ such that for every $n \in \NN$, the map $h_{a_n}$ admits a  periodic sink.

\end{theorem}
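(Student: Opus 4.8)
The plan is to combine the distortion estimates available for Misiurewicz-type maps with a parameter-exclusion argument of Wang--Young type, implemented along the combinatorial skeleton provided by the collection $J_\delta$. First I would fix $\hat a \in \Delta(\lambda,\alpha)$ close to $a^\star$; by the preceding Lemma such parameters exist in positive density near $a^\star$, and for such $\hat a$ the map $h_{\hat a}$ satisfies the $(\lambda,\alpha)$--Collet--Eckmann conditions \textbf{(CE1)--(CE2)} at each critical point. The idea is then to perturb $\hat a$ slightly so that one critical orbit is forced to land \emph{exactly} on a critical point, producing a superattracting periodic orbit; the completely accessible vertex in $\mathcal{P}(J_\delta)$ is precisely what guarantees that every critical point can be routed, through the blocks $(J^{(i)},n(i),j(i))$, into the basin of one distinguished critical point $c_{i_0}$, so that a single well-chosen parameter adjustment closes up a periodic sink through $c_{i_0}$.

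The key steps, in order, are: (1) Using \textbf{(H5)}/admissibility, show that the map $a \mapsto h_a^{m}(c(a)) - c^{(j)}(a)$ has nonzero derivative at $a=a^\star$ for the relevant iterate count $m$ — this is the transversality that lets us move the image of a critical orbit across a target critical point at positive speed, and it follows by the standard telescoping/Melnikov-type sum for the parameter derivative of deep iterates, controlled by \textbf{(CE2)}. (2) For $\delta<\delta_1$ with $\delta_1$ small, use assumptions (1)--(2) on $J_\delta$ (that $h^k(J^{(i)})$ avoids $C_\delta$ for $0<k<n(i)$ and $h^{n(i)}(J^{(i)})=C_\delta^{(j(i))}$) together with the expansion estimate in the definition of $\mathcal E$ to get bounded distortion of $h^{n(i)}$ on $J^{(i)}$; this makes the return map between critical neighbourhoods a nearly-affine expanding map, so that accessibility of $v_{i_0}$ translates into: for any critical point $c$, there is an iterate whose image covers $C_\delta^{(i_0)}$ with bounded distortion. (3) Follow a critical orbit of $c_{i_0}$ until it next enters $C_\delta$; by \textbf{(CE1)} and accessibility it does, and by bounded distortion the relevant branch of the appropriate iterate $h^{N}$ maps a small parameter interval around $\hat a$ diffeomorphically, with derivative in $a$ bounded below, onto an interval in phase space that properly contains $c_{i_0}(a)$'s location. (4) Apply the intermediate value theorem in the parameter to find $a_1$ near $\hat a$ with $h_{a_1}^{N_1}(c_{i_0}(a_1)) = c_{i_0}(a_1)$: then $c_{i_0}(a_1)$ is a periodic point with $(h_{a_1}^{N_1})'=0$ there (since $c_{i_0}$ is a critical point and \textbf{(H6)} is \emph{not} needed here — criticality is in the $\theta$ direction), i.e. a periodic sink. (5) Iterate the construction inside ever-smaller parameter neighbourhoods to produce a sequence $a_n \to \hat a$, each giving a periodic sink; a Baire/nesting argument using that $\Delta(\lambda,\alpha)$ and the Collet--Eckmann estimates persist on a positive-measure set near $a^\star$ ensures the construction can be restarted at each scale.

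The main obstacle I expect is Step (3): controlling the \emph{first return to $C_\delta$} of the perturbed critical orbit uniformly in the parameter, because a priori the orbit of $c_{i_0}$ under $h_a$ may spend a very long (and $a$-dependent) time outside $C_\delta$, during which the parameter derivative $\partial_a h_a^n(c_{i_0}(a))$ and the phase-space derivative $(h_a^n)'$ must be compared. The resolution is the standard Wang--Young \emph{derivative-ratio} lemma: along Collet--Eckmann orbits one shows $|\partial_a h_a^n(c(a))| \asymp |(h_a^n)'(h_a(c(a)))|$ up to a multiplicative constant, so the parameter window shrinks at exactly the same exponential rate as the phase-space target, and the intermediate value argument still applies. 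Making this ratio estimate rigorous — in particular checking it survives the passages through $C_\delta\setminus C$ where $|h'|$ is small but the orbit then recovers exponential growth before the next return (the third ``digestive remark'' above) — is the technical heart of the proof, but it is exactly the content borrowed from \cite{WY2006, OW2010}, so I would invoke those estimates rather than reprove them.
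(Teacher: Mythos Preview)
The paper does not supply a proof of this theorem: it is stated in the review section ``Rank-one attractors' theory revisited'' as a result adapted from \cite{OW2010}, with no argument given beyond the surrounding definitions and the combinatorial setup of $J_\delta$ and $\mathcal{P}(J_\delta)$. There is therefore nothing in the paper to compare your proposal against; the theorem functions here purely as a black box that is \emph{applied} in Section~\ref{Proof_ThC} (via Lemmas~\ref{lemma1} and~\ref{lemma2}, which verify the complete-accessibility hypothesis for the specific singular limit $h_a$).

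That said, your sketch is a faithful outline of how the original result in \cite{OW2010} is actually obtained: the closing-up of a superattracting periodic orbit through the distinguished critical point $c^{(i_0)}$ by an intermediate-value argument in the parameter, driven by the Wang--Young comparison $|\partial_a h_a^n(c(a))| \asymp |(h_a^n)'(h_a(c(a)))|$ along Collet--Eckmann orbits, is exactly the mechanism. Your identification of Step~(3) --- uniform control of the first return to $C_\delta$ and the derivative-ratio lemma --- as the technical heart is accurate, and your remark that this is precisely what is imported from \cite{WY2006, OW2010} matches how the present paper treats the matter. One small correction: the role of complete accessibility is slightly different from what you wrote --- it is not that \emph{every} critical point is routed into $c^{(i_0)}$'s basin, but rather that the orbit of $c^{(i_0)}$ itself can be steered (via the directed edges of $\mathcal{P}(J_\delta)$) back to $C_\delta^{(i_0)}$ regardless of which component of $C_\delta$ it first re-enters; this is what guarantees a closed loop through $c^{(i_0)}$ can always be arranged.
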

The periodic sink is \emph{superstable} because it is a critical point for the singular limit  ($\Rightarrow$ one of the Lyapunov multipliers is very close to $0$).

\section{Computation of the first return map}
\label{first return map}
For $\varepsilon>0$, in this section we denote by $B_{\varepsilon}(O_i)$ the set of points $X\in \mathcal{V}\subset \RR^2$ such that $\|X -O_i\|<\varepsilon$.
When the phase space of \eqref{general_R2_perturbed} is augmented with a $\EU^1$ factor, the hyperbolic saddles $O_1$ and $O_2$ of \eqref{general_R2}
become hyperbolic periodic solutions that we call by $\CC_1$ and $\CC_2$. These hyperbolic periodic orbits persist
for $\mu=(\mu_1, \mu_2)$ sufficiently small (in the $C^k$--norm, $k\geq 3$). 
By \cite{WO}, under hypotheses \textbf{(P1)--(P5)}, there exist  $\varepsilon_0 , \mu_0> 0$ and  a $\mu$-dependent coordinate system $(x, y, \theta)$ 
defined on the open set $V_i=B_{\varepsilon_0}(O_i)\times \EU^1$ such that for every $\mu =(\mu_1, \mu_2)\in  [0,\mu_0]\times  [0,\mu_0]$, we may write $$\CC_i =  \left\{\left(x_1^{(i)}, x_2^{(i)}, \theta^{(i)}\right): \quad x_1^{(i)}=x_2^{(i)} = 0, \quad \theta^{(i)} \in \EU^1\right\}$$ and the stable and unstable manifolds are locally flat:

$$
W^s(\CC_i) \cap V_i \subset \left\{\left(x_1^{(i)}, x_2^{(i)}, \theta^{(i)}\right): \quad x_2^{(i)}=0, \quad \theta^{(i)} \in \EU^1 \right\}
$$
and
$$
W^u( \CC_i) \cap V_i \subset \left\{\left(x_1^{(i)}, x_2^{(i)}, \theta^{(i)}\right): \quad x_1^{(i)}=0, \quad \theta^{(i)} \in \EU^1\right\}.
$$

For $\mu \in \,\, ]0, \mu_0]\times \, ]0, \mu_0]$  define the cross sections:

\begin{eqnarray*}
\In (\CC_1)&=& \left\{ \left(x_1^{(1)}, x_2^{(1)}, \theta^{(1)}\right):  \qquad x_1^{(1)}=\varepsilon_0, \quad -\|\mu\|/C_1 \leq x_2^{(1)}\leq C_1\|\mu\| , \quad \theta^{(1)} \in \EU^1\right\} \\ \\
\Out (\CC_1)&=& \left\{ \left(x_1^{(1)}, x_2^{(1)}, \theta^{(1)}\right):  \qquad x_2^{(1)}=\varepsilon_0, \quad 0 \leq x_1^{(1)}\leq C'_1\|\mu\| , \quad \theta^{(1)} \in \EU^1 \right\} \\ \\
\In (\CC_2)&=& \left\{ \left(x_1^{(2)}, x_2^{(2)}, \theta^{(2)}\right):  \qquad x_1^{(2)}=\varepsilon_0, \quad -\|\mu\|/C_2 \leq x_2^{(2)}\leq C_2\|\mu\| , \quad \theta^{(2)} \in \EU^1\right\} \\ \\
\Out (\CC_2)&=& \left\{\left(x_1^{(2)}, x_2^{(2)}, \theta^{(2)}\right): \qquad x_2^{(2)}=\varepsilon_0, \quad -\|\mu\|/C'_2 \leq x_1^{(2)}\leq C'_2\|\mu\|, \quad \theta^{(2)} \in \EU^1\right\}
\end{eqnarray*}
where the constants $C_i>0$ are suitably chosen and $C'_i$ satisfy $C'_i \mu_0 \ll \varepsilon_0$.

\subsection{Magnified coordinates}
\label{coords}
For $\mu =(\mu_1, \mu_2)\in \, \, ] 0, \mu_0]\times ]0, \mu_0]$, we make following change of coordinates:
\begin{equation}
\label{mag_coords1}
 \left(\|\mu\|\,  y_1^{(1)}, \|\mu\| \, y_2^{(1)}, \theta^{(1)}\right) \mapsto  \left(x_1^{(1)},\,   x_2^{(1)}, \theta^{(1)}\right)
\end{equation}
and 
\begin{equation}
\label{mag_coords2}
 \left(\|\mu\|\, y_1^{(2)}, \|\mu\|\, y_2^{(2)}, \theta^{(2)}\right) \mapsto  \left(  x_1^{(1)},\,  x_2^{(2)}, \theta^{(2)}\right)
\end{equation}
and therefore we obtain (see Table 2): 
\begin{eqnarray*}
\In (\CC_1)&=& \left\{ \left(y_1^{(1)}, y_2^{(1)}, \theta^{(1)}\right):  \qquad y_1^{(1)}=\varepsilon_0/\|\mu\|, \quad - 1/C_1 \leq y_2^{(1)}\leq C_1 , \quad \theta^{(1)} \in \EU^1\right\} \\ \\
\Out (\CC_1)&=& \left\{\left(y_1^{(1)}, y_2^{(1)}, \theta^{(1)}\right):   \qquad y_2^{(1)}=\varepsilon_0/\|\mu\|, \quad 0 \leq y_1^{(1)}\leq C'_1  , \quad \theta^{(1)} \in \EU^1 \right\} \\ \\
\In (\CC_2)&=& \left\{\left(y_1^{(1)}, y_2^{(1)}, \theta^{(1)}\right):   \qquad y_1^{(2)}=\varepsilon_0/\|\mu\|, \quad -1/C_2 \leq y_2^{(2)}\leq C_2\  , \quad \theta^{(2)} \in \EU^1\right\} \\ \\
\Out (\CC_2)&=& \left\{\left(y_1^{(1)}, y_2^{(1)}, \theta^{(1)}\right):  \qquad y_2^{(2)}=\varepsilon_0/\|\mu\|, \quad -1/C'_2 \leq y_1^{(2)}\leq C'_2 , \quad \theta^{(2)} \in \EU^1\right\}.
\end{eqnarray*}

\begin{table}[htb]
\begin{center}
\label{Table3}
\begin{tabular}{|c|c||c|c|} \hline 
{Set}  & \qquad  Notation \qquad  \qquad & {Set}  & \qquad  Notation \qquad  \qquad   \\
\hline \hline
&&&\\
$\In (\CC_1) $ &  $\left(y_1^{(1)}, y_2^{(1)}, \theta^{(1)}\right)$ & $\In (\CC_2) $ & $\left(y_1^{(2)}, y_2^{(2)}, \theta^{(2)}\right)$   \\  & && \\
 \hline \hline  &&&\\
$\Out (\CC_1) $ &$\left(\overline{y}_1^{(1)}, \overline{y}_2^{(1)}, \overline{\theta}^{(1)}\right)$ &$\Out (\CC_2) $ &$\left(\overline{y}_1^{(2)}, \overline{y}_2^{(2)}, \overline{\theta}^{(2)}\right)$   \\ 
&&&  \\ \hline

\hline

\end{tabular}
\end{center}
\label{notationA}
\bigskip
\caption{\small Coordinates and notation of the cross sections after the change of coordinates \eqref{mag_coords1} and  \eqref{mag_coords2}. }
\end{table} 

For $i\in \{1,2\}$, the sets $\In (\CC_i)$ may be divided as follows: $$
\In (\CC_i)\quad  =\quad  \In^+ (\CC_i) \quad \dot{\cup} \quad W^s(\CC_i) \quad \dot{\cup} \quad \In^- (\CC_i)  ,
$$
according to the sign of the $y_2^{(i)}$ coordinate: positive (negative) for initial conditions in $\In^+ (\CC_i)$ $(\In^- (\CC_i))$, zero for initial conditions in $W^s(\CC_i)$. We define analogously the sets $\Out^+ (\CC_2)$ and $\Out^- (\CC_2)$.

\bigbreak
For $i\in \{1,2\}$, we start by computing a normal form for \eqref{general_R3}  valid in $V_i\times   ]\, 0, \mu_0]^2$ and the associated local map near $\CC_i$, say $$Loc_i: \In (\CC_i)\backslash W^s(\CC_i) \rightarrow \Out(\CC_i).$$ The notation $\|\star\|_{\CC^3}$ denote the $C^3$--norm defined for maps defined in $V_i\times ]0, \mu_0]^2$. Recall that $V_i= B_{\varepsilon_0}(O_i) \times \EU^1$ is a genus two torus.
\begin{proposition}[\cite{WO}, adapted]
\label{local map prop}
System  \eqref{general_R3}  may be written, in terms of coordinates $\left(y_1^{(i)}, y_2^{(i)}, \theta^{(i)}\right)$ on $V_i\times [0, \mu_0]^2$,  in the following form:
\begin{equation}
\label{normal form 2}
\left\{
\begin{array}{l}
\dot y_1^{(i)}= \left(-c_i +\|\mu\| g_1\left(y_1^{(i)}, y_2^{(i)}, \theta^{(i)}; \mu \right) \right)y_1^{(i)}\\ \\
\dot y_2^{(i)}=\left(e_i +\|\mu\| g_2\left(y_1^{(i)}, y_2^{(i)}, \theta^{(i)}; \mu \right) \right) y_2^{(i)}\\ \\
\dot \theta^{(i)}= \omega \\
\end{array}
\right.
\end{equation}
There exists $K_1\in \RR^+$ such that the maps $g_1, g_2$ are analytic on $V_i\times ]\, 0, \mu_0]$ and satisfies $\| g_1\|_{\CC^3}, \|g_2 \|_{\CC^3}\leq K_1$, $i\in \{1,2\}$.
\end{proposition}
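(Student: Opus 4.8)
\medbreak
\noindent The plan is to deduce \eqref{normal form 2} from two classical ingredients: a smooth linearization of the unperturbed planar saddle when $\mu=0$, made possible by the non-resonance condition \textbf{(P2)}, and a $\mu$-dependent flattening of the invariant manifolds of the persisting hyperbolic periodic orbit $\CC_i$ when $\mu>0$, made possible by \textbf{(P5)} together with the stable manifold theorem. The whole statement is local around $\CC_i$, so it suffices to work on $V_i\times[0,\mu_0]^2$ and to track the dependence on $\|\mu\|$; this is carried out in \cite{WO}, the only new point being the adaptation to a two-parameter forcing and to the magnified coordinates of Subsection \ref{coords}.

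First I would treat the case $\mu=0$. By \textbf{(P1)} the point $O_i$ is a hyperbolic equilibrium of \eqref{general_R2}, and \textbf{(P2)} is exactly the Diophantine (small-divisor) condition that excludes resonances of all orders between $-c_i$ and $e_i$; hence the classical smooth (indeed, in the analytic category, Brjuno-type) linearization theorems for non-resonant hyperbolic germs yield a coordinate change on $B_{\varepsilon_0}(O_i)$ conjugating \eqref{general_R2} to $\dot x_1=-c_i x_1$, $\dot x_2=e_i x_2$. Adjoining $\dot\theta=\omega$ — the change being $\theta$-independent — carries this to the flow of \eqref{general_R3} on $V_i$ at $\mu_1=\mu_2=0$, and in these coordinates $W^s(\CC_i)$ and $W^u(\CC_i)$ are the flat cylinders $\{x_2=0\}$ and $\{x_1=0\}$, as recorded above.

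For $\mu\in\,]0,\mu_0]^2$ I would proceed as follows. By \textbf{(P5)} the forcing terms and their first derivatives vanish along $\CC_i$, so $\CC_i$ remains an invariant hyperbolic periodic orbit with the same linearization transverse to the flow (spectrum $\{-c_i,e_i\}$), for all small $\mu$. The stable manifold theorem with parameters gives local manifolds $W^s_{\loc}(\CC_i)=\{x_2=w^s(x_1,\theta;\mu)\}$ and $W^u_{\loc}(\CC_i)=\{x_1=w^u(x_2,\theta;\mu)\}$, with $w^s,w^u$ tangent to the axes along $\CC_i$, as smooth as the data jointly in $(x,\theta,\mu)$, and identically zero (hence $O(\|\mu\|)$ together with their derivatives) at $\mu=0$. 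The $\mu$-dependent near-identity map $(x_1,x_2,\theta)\mapsto(x_1-w^u(x_2,\theta;\mu),\ x_2-w^s(x_1,\theta;\mu),\ \theta)$ — a diffeomorphism on a possibly smaller $V_i$ for $\mu_0$ small, equal to the identity at $\mu=0$ and not touching $\theta$ — straightens both manifolds to $\{x_1=0\}$ and $\{x_2=0\}$. Each of these sets being invariant, Hadamard's lemma lets one factor out the corresponding variable, producing $\dot x_1=x_1A(x_1,x_2,\theta;\mu)$, $\dot x_2=x_2B(x_1,x_2,\theta;\mu)$, $\dot\theta=\omega$, with $A,B$ of class $C^3$ (inheriting, up to the finitely many derivatives consumed by the transformations, the $C^4$ regularity of the forcing, which is why $C^4$ is imposed in \eqref{general_R3}); moreover $A\equiv-c_i$ and $B\equiv e_i$ identically at $\mu=0$, since there the coordinates are the linearizing ones.

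It remains to extract the $\|\mu\|$ scaling. As the forcing enters \eqref{general_R3} linearly in $(\mu_1,\mu_2)$ and all the above coordinate changes are smooth in $\mu$ and equal the identity at $\mu=0$, Taylor's formula in $\mu$ about the origin gives $\|A(\cdot;\mu)+c_i\|_{C^3}\le K_1\|\mu\|$ and $\|B(\cdot;\mu)-e_i\|_{C^3}\le K_1\|\mu\|$ on the compact set $V_i\times[0,\mu_0]^2$ for some $K_1>0$; then $g_1:=(A+c_i)/\|\mu\|$ and $g_2:=(B-e_i)/\|\mu\|$ are $C^3$ on $V_i\times\,]0,\mu_0]^2$ with $\|g_1\|_{C^3},\|g_2\|_{C^3}\le K_1$. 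Passing finally to the magnified coordinates $x=\|\mu\|y$ of Subsection \ref{coords} is a linear rescaling that preserves the form of the equations and, since $\|\mu\|\le\mu_0<1$, the $C^3$ bounds, so \eqref{normal form 2} follows. The one genuinely delicate step is the linearization at $\mu=0$: it relies entirely on the Diophantine control furnished by \textbf{(P2)} — the reason that hypothesis is present — whereas everything after it is bookkeeping of uniformity in $\|\mu\|$.
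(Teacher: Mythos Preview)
The paper does not give its own proof of this proposition: it is stated as ``\cite{WO}, adapted'' and simply imported from Wang--Ott, so there is no in-paper argument to compare against. Your outline is precisely the standard route taken in \cite{WO}: analytic (Siegel/Brjuno-type) linearization of the unperturbed saddle using the Diophantine condition \textbf{(P2)}, persistence and flattening of the local invariant manifolds of $\CC_i$ using \textbf{(P5)} and the parametrized stable/unstable manifold theorem, factorization via Hadamard's lemma, and extraction of the $\|\mu\|$-prefactor by Taylor expansion in $\mu$ at $\mu=0$.

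Two small remarks. First, the statement in the paper asserts that $g_1,g_2$ are \emph{analytic}; this comes from the fact that in \cite{WO} the unperturbed field is analytic and the linearizing conjugacy is analytic, but once the $C^4$ forcings $P_j,Q_j$ enter, only finite differentiability survives --- your $C^3$ conclusion is what is actually used downstream (Lemma~\ref{return1}, Lemma~\ref{important lemma}), and is the honest claim. Second, your final sentence about the magnified rescaling $x=\|\mu\|y$ is correct but worth one extra line: after substitution the arguments of $g_j$ become $(\|\mu\|y_1,\|\mu\|y_2,\theta;\mu)$, and the chain rule only multiplies $y$-derivatives by powers of $\|\mu\|\le 1$, so the $C^3$ bound is indeed preserved on the (now larger) domain $V_i$ in $y$-coordinates. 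With these caveats your proposal is correct and matches the approach the paper is citing.
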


Notice that, by construction, $\mathbb{V}^\star$ is a neighbourhood of $\Gamma= \mathcal{L}_1\cup\mathcal{L}_2$ such that all solutions starting at $\mathbb{V}^\star$ remain inside $\mathbb{V}^\star$ for all positive $t$.

\bigbreak

For $\mu =(\mu_1, \mu_2)\in \, \, ] 0, \mu_0]^2$, let $q_0^{(i)}=\left(y_1^{(i)}(0), y_2^{(i)}(0), \theta^{(i)}(0)\right) \in \mathbb{V}^\star \cap \In (\CC_i)\backslash W^s(\CC_i)$ and let 
$$q^{(i)}\left(t, q_0^{(i)}; \mu\right)= \left[y_1^{(i)}\left(t, q_0^{(i)}; \mu\right),\, \,  y_2^{(i)}\left(t, q_0^{(i)}; \mu\right),\, \,  \theta^{(i)}\left(t, q_0^{(i)}; \mu\right)\right],\qquad t\geq 0$$ denote the unique solution of \eqref{normal form 2} with $q^{(i)}\left(0, q_0^{(i)}; \mu\right)=q_0^{(i)}$ (initial condition). Integrating \eqref{normal form 2}, we may write:

\begin{equation}
\label{normal form 3}
\left\{
\begin{array}{l}
 y_1^{(i)}\left(t, q_0^{(i)}; \mu\right)=  y_1^{(i)}(0) \exp  \dpt\int_0^t  \left[-c_i +\|\mu\| g_1^{(i)}\left(q^{(i)}\left(s, q_0^{(i)}; \mu\right)\right)ds\right]  \\ \\
 y_2^{(i)}\left(t, q_0^{(i)}; \mu\right)=  y_2^{(i)}(0)\exp \dpt \int_0^t \left[e_i +\|\mu\| g_2^{(i)}\left(q^{(i)}\left(s, q_0^{(i)}; \mu\right)\right)ds\right]  \\\\
\theta^{(i)}\left(t, q_0^{(i)}; \mu\right)= \theta_0^{(i)} + \omega t.\\
\end{array}
\right.
\end{equation}

The expression \eqref{normal form 3} may be rephrased as:

\begin{equation}
\label{normal form 4}
\left\{
\begin{array}{l}
y_1^{(i)}(t, q_0, \mu)= y_1^{(i)}(0) \exp \left( t  \left[-c_i + w_1^{(i)}\left(t, q_0^{(i)}; \mu\right)\right] \right)  \\ \\
 y_2^{(i)}(t, q_0, \mu)= y_2^{(i)}(0) \exp \left( t  \left[e_i + w_2^{(i)}\left(t, q_0^{(i)}; \mu\right)\right] \right)  \\ \\
\theta^{(i)}(t, q_0; \mu)= \theta_0^{(i)} + \omega t\\
\end{array}
\right.
\end{equation}
where  
\begin{equation}
\label{w_i expression}
w_j^{(i)}\left(t, q_0^{(i)}; \mu\right) = \frac{1}{t} \int_0^t \|\mu\|\, \,  g_j^{(i)}\left(q^{(i)}\left(s, q_0^{(i)}; \mu\right); \mu\right) \, ds, \qquad \text{for} \qquad j\in \{1,2\}. 
\end{equation}

The next result establishes the $C^3$--control of  $w_j$ on  $V_i  \times ]0, \mu_0]^2$, $i, j\in \{1,2\}$.

\begin{proposition}[\cite{WO}, adapted]
For $j\in {1,2}$, there exists $K_2\in \RR^+$ such that the following holds: for any $T^\star>1$ such that all solutions of \eqref{general_R3} that start in  $(\In(\CC_j)\backslash W^s(\CC_j))\cap \mathbb{V}^\star$  remain in $V_i$ up to time $T^\star$, we have $\| w_j\|_{\CC^3}\leq K_2\mu$. 
\end{proposition}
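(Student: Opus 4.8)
The statement asserts a $C^3$-bound $\|w_j\|_{\CC^3}\le K_2\,\mu$ with a \emph{single} constant $K_2$, uniform over all admissible escape times $T^\star$. The plan is to exploit the fact that the time average defining $w_j$ (see \eqref{w_i expression}) only integrates the bounded quantity $\|\mu\|\,g_j^{(i)}$ over the orbit segment, and that the prefactor $\tfrac1t$ exactly compensates the length of integration, so no exponential-in-$T^\star$ growth can enter the zeroth-order estimate. Indeed, from Proposition \ref{local map prop} we have $\|g_j^{(i)}\|_{\CC^3}\le K_1$, hence pointwise
\[
\left| w_j^{(i)}\left(t, q_0^{(i)}; \mu\right)\right| \;=\; \frac{\|\mu\|}{t}\left| \int_0^t g_j^{(i)}\left(q^{(i)}\left(s, q_0^{(i)}; \mu\right); \mu\right)\, ds\right| \;\le\; \|\mu\|\,K_1,
\]
and this bound is independent of $t\le T^\star$, hence independent of $T^\star$ itself. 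So the $C^0$-part of the claim is immediate with $K_2\ge K_1$.

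The real content is to propagate this $T^\star$-independence through the first and higher derivatives in the variables $(q_0^{(i)},\mu)$ (and in $t$), i.e. to control $\partial^\beta w_j$ for all multi-indices $|\beta|\le 3$. The plan is to differentiate the integral formula \eqref{w_i expression} under the integral sign and organise terms by the chain rule. A derivative of $w_j$ in an initial-condition direction produces $\tfrac{\|\mu\|}{t}\int_0^t Dg_j^{(i)}\big(q^{(i)}(s)\big)\cdot \partial q^{(i)}(s)\,ds$; here the dangerous factor is the variational quantity $\partial q^{(i)}(s)/\partial q_0^{(i)}$, which a priori grows like $e^{e_i s}$ along the unstable direction. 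The key step is to observe that on the relevant orbit segments one is working in the \emph{magnified} coordinates and, more importantly, that the $y_1$-component is exponentially \emph{contracted} (rate $-c_i$) while the escape condition forces the $y_2$-component to remain of bounded size (at most $\varepsilon_0/\|\mu\|$, the width of the sections) precisely up to time $T^\star$. One then splits the variational flow into its stable and unstable blocks and uses that the factor $\tfrac1t$ together with the a priori confinement of the orbit to $V_i$ on $[0,T^\star]$ converts each $\int_0^t e^{e_i(s-t)}\,ds$-type kernel into a $T^\star$-independent constant; the crucial cancellation is that growth of the variation is always paired against the reciprocal time normalisation and against the contraction of the complementary coordinate. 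I would carry this out one order at a time: first the gradient, then the Hessian, then the third derivatives, at each stage feeding the previous bound into the Gronwall-type estimate for the next-order variational equation, so that the constants $K_1$ and the section widths combine into one final $K_2$.

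Concretely, the steps in order are: (i) record the $C^0$ bound above; (ii) write down the first variational equation associated to \eqref{normal form 2}, bounding its coefficients by $c_i,e_i$ and $\|\mu\|K_1$ via Proposition \ref{local map prop}; (iii) show the confinement hypothesis on $T^\star$ (all solutions starting in $(\In(\CC_j)\setminus W^s(\CC_j))\cap\mathbb{V}^\star$ remain in $V_i$ up to $T^\star$) forces the unstable coordinate to stay bounded by the fixed section width, so the variational matrix does \emph{not} grow past a $T^\star$-independent size on the admissible segment; (iv) insert these bounds into the differentiated forms of \eqref{w_i expression}, using $\tfrac1t$ to absorb the length of the segment, and obtain a $T^\star$-independent bound on $\nabla w_j$; (v) iterate for the second and third derivatives via the higher variational equations, each time reusing the confinement to keep the coefficient functions and their derivatives bounded by $K_1$; (vi) collect all the pieces into a single constant $K_2\in\RR^+$ so that $\|w_j\|_{\CC^3}\le K_2\,\mu$ holds simultaneously for every admissible $T^\star$.

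The main obstacle — and the reason the naive estimate only gives a $T^\star$-dependent $K_2(T^\star)\sim e^{e_iT^\star}$ — is exactly controlling the unstable-direction growth of the variational flow in step (iii)--(iv). The whole argument hinges on the observation that the escape/confinement condition built into the definition of $T^\star$ caps the unstable coordinate at the fixed width $\varepsilon_0/\|\mu\|$ of the cross sections, so that the orbit cannot actually explore a region where the variation has grown large while still lying in $V_i$; combined with the strong dissipation $c_i>e_i$ from \textbf{(P1)} and the $\tfrac1t$ normalisation, this is what removes the exponential and yields a single uniform $K_2$. I expect the bookkeeping for the third-order derivatives to be the most tedious part, but no new idea beyond the stable/unstable splitting and the time-normalisation cancellation should be required there.
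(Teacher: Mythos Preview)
The paper does not prove this proposition at all; it is quoted directly from \cite{WO} (Wang--Ott), so there is no in-paper argument to compare against. What I can assess is whether your plan would reproduce the Wang--Ott bound.

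Your $C^0$ estimate is fine. The gap is in steps (iii)--(iv). You argue that the confinement hypothesis caps the unstable coordinate $y_2^{(i)}(s)$ by the section width, and then infer that ``the variational matrix does \emph{not} grow past a $T^\star$-independent size''. These are two different objects: confinement bounds the \emph{position} $y_2^{(i)}(s)$, while the quantity entering $\partial_{q_0}w_j$ is the \emph{variation} $\partial y_2^{(i)}(s)/\partial y_2^{(i)}(0)\approx e^{e_i s}$. The confinement constraint on $T^\star$ (all solutions with $|y_2^{(i)}(0)|\le C_j$ remain in $V_i$) only yields $e^{e_iT^\star}\lesssim \varepsilon_0/(\|\mu\|\,C_j)$, so the variation is bounded by $O(1/\|\mu\|)$, not by an absolute constant. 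Feeding this into your differentiated integral gives $|\partial_{q_0}w_j|\lesssim \|\mu\|\cdot K_1\cdot O(1/\|\mu\|)=O(1)$, one power of $\|\mu\|$ short of the claimed $K_2\mu$; higher derivatives fare worse. The kernel you write, $\int_0^t e^{e_i(s-t)}\,ds$, does not arise from the computation: the integrand carries $e^{e_is}$, and $\tfrac1t\int_0^te^{e_is}\,ds\sim e^{e_it}/(e_it)$ is not uniformly bounded, so the $\tfrac1t$ normalisation alone cannot be the mechanism.

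What is missing from your plan is precisely the r\^ole of the Diophantine hypothesis \textbf{(P2)}, which you never invoke. In \cite{WO} the non-resonance condition is used to construct the normal form of Proposition~\ref{local map prop} so that the remainders $g_j^{(i)}$ carry additional structure (vanishing along the invariant manifolds, or dependence on the unrescaled variables $x=\|\mu\|y$); it is this structure, not a generic Gronwall argument, that supplies the extra powers of $\|\mu\|$ needed to push the $C^3$ norm down to $K_2\mu$. The dissipation $c_i>e_i$ you cite plays no r\^ole in the first-order estimate and cannot rescue the counting above. To make the argument go through you would need to open up the normal-form construction in \cite{WO} and track how \textbf{(P2)} controls the $y$-dependence of $g_j^{(i)}$.
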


Let $q_0^{(i)} \in  \In(\CC_i) \backslash W^s(\CC_i)$ and   $\mu =(\mu_1, \mu_2)\in \, \, ] 0, \mu_0]^2$. The time of flight $T\equiv T\left(q_0^{(i)}; \mu \right)$ inside $V_i$ may be determined explicitly by solving the equation:
$$
\varepsilon_0/\|\mu\| = y_2^{(i)}(0) \exp \left(T \left(e_i + w_2^{(i)}\left(T, q_0^{(i)}; \mu\right)\right)\right),
$$
from where we deduce that
$$
T\equiv T \left(q_0^{(i)}; \mu\right)= \frac{1}{e_i +  w_2^{(i)}\left(T, q_0^{(i)}; \mu\right)}\ln \left(\frac{\varepsilon_0}{\|\mu\|\,  y_2^{(i)} (0)}\right).
$$
Proposition 5.7 and Lemma 7.4  of  \cite{WO}  provide a precise  control of $T$, in the $C^3$--norm (check the last paragraph of \cite[Sec. 7]{WO}).

\subsection{Local map}
\label{ss:local_map}
For $i\in \{1,2\}$, from now on, let us omit the constant components of the cross sections $\In^+(\CC_i)$ and $\Out(\CC_i)$. More specifically, let us use the covers (see Table 2):
$$
\In (\CC_i): \quad \left(y_1^{(i)}, y_2^{(i)}, \theta^{(i)}\right) \mapsto  \left( y_2^{(i)}, \theta^{(i)}\right)
$$
and $$
\Out (\CC_i): \quad \left(\overline{y}_1^{(i)}, \overline{y}_2^{(i)}, \overline\theta^{(i)}\right) \mapsto  \left(\overline{y}_1^{(i)},  \overline\theta^{(i)}\right). $$

 The local map $\emph{Loc}\, _i$ near $\CC_i$ sends $\left( y_2^{(i)}, \theta^{(i)}\right)\in \In^+(\CC_i)$ to  coordinates $ \left(\overline{y}_1^{(i)},  \overline\theta^{(i)}\right)$ in $\Out(\CC_i)$ and it is given by:
 \begin{eqnarray}
 \label{local_map1}
\overline{y}_1^{(i)} &=& \frac{\varepsilon_0}{\|\mu\|} \left[\frac{\varepsilon_0}{\|\mu\|\,  y_2^{(i)}} \right]^{-\delta_i} \\
\nonumber \overline\theta ^{(i)} &=& \theta^{(i)} + \frac{\omega}{e_i+w_2^{(i)}} \ln \left( \frac{\varepsilon_0}{\|\mu\|\, y_2^{(i)}}\right),
\end{eqnarray}
where 
\begin{equation}
\label{delta_i}
\delta_i \equiv \delta_i\left(t, q_0^{(i)}; \mu\right) ={\frac{c_i + w_1^{(i)}}{e_i +w_2^{(i)}}}>1.
\end{equation}
These formulas will be simplified later. 
 Note that $\dpt \lim_{(\mu_1, \mu_2)\rightarrow (0,0)} \delta_i\left(t, q_0^{(i)}; \mu\right) =c_i/e_i>1$.

A corresponding map can be constructed from $\In^-(\CC_i)$ to $\Out(\CC_i)$, but we are interested in trajectories following the  heteroclinic cycle $\Gamma$ in the positive $y_2^{(i)}$-direction. 

\subsection{The global map}
We assume that for $\mu=(\mu_1, \mu_2)\in\, ]0, \mu_0]^2$, the flow generated by \eqref{general_R3} induces a map from $\Out(\CC_i)$ into $\In(\CC_{i+1})$ satisfying  conditions \textbf{(P7a)} and \textbf{(P7b)} -- see Figure \ref{manifolds1}.
The global map $\Psi_{1\rightarrow 2}: \Out(\CC_1) \rightarrow \In(\CC_2)$ is given in the rescaled coordinates defined in Subsection \ref{coords}, by:

\begin{equation}
\label{global6.9}
\begin{array}{l}
y_2^{(2)}= \dpt b_1 \overline{y}_1^{(1)}  +\frac{\mu_1}{\|\mu\|} \phi_1\left( \overline{y}_1^{(1)}, \overline\theta^{(1)}\right)  \\ \\
\theta^{(2)}= \overline\theta^{(1)} + \xi_1 + \mu_1\psi_1\left(  \overline{y}_1^{(1)}, \overline\theta^{(1)}\right),
\end{array}
\end{equation}
where  $b_1 \neq 0$,  $ \phi_1\left( \overline{y}_1^{(1)}, \overline\theta^{(1)}\right)=\Phi_1\left(\|\mu\|\overline{y}_1^{(1)}, \overline\theta^{(1)}\right)$ and $\psi_1\left(  \overline{y}_1^{(1)}, \overline\theta^{(1)}\right)= \Psi_1\left( \|\mu\|\overline{y}_1^{(1)}, \overline\theta^{(1)}\right) $,  for $\Phi_1, \Psi_1$ are the maps defined in \textbf{(P7a)}.
Analogously, the global map $\Psi_{2\rightarrow 1}: \Out(\CC_2) \rightarrow \In(\CC_1)$ is given in the rescaled coordinates by:
\begin{equation}
\begin{array}{l}
y_2^{(1)}= b_2 \overline{y}_1^{(2)}  +\dfrac{\mu_2}{\|\mu\|}  \phi_2\left( \overline{y}_1^{(2)}, \overline\theta^{(2)}\right)  \\ \\
\theta^{(1)}= \overline\theta^{(2)} + \xi_2 + \mu_2\psi_2\left( \overline{y}_1^{(2)}, \overline\theta^{(2)}\right),
\end{array}
\end{equation}
for $b_2 \neq 0$, $ \phi_2\left( \overline{y}_1^{(2)}, \overline\theta^{(2)}\right)=\Phi_2\left(\|\mu\|\overline{y}_1^{(2)}, \overline\theta^{(2)}\right)$ and $\psi_2\left(  \overline{y}_1^{(2)}, \overline\theta^{(2)}\right)= \Psi_2\left( \|\mu\|\overline{y}_1^{(2)}, \overline\theta^{(2)}\right) $, where $\Phi_2, \Psi_2$ are the maps defined in \textbf{(P7b)}.

\begin{remark}
At this stage, we may need to slighly change the positive constants $C_1$, $C_2$, $C_1'$ and $C_2'$ in order that the global maps are well defined (this would correspond to ``shrink'' the domain of definition of  $\Psi_{1\rightarrow 2}$ and $\Psi_{2\rightarrow 1}$). We omit this technicality. 
\end{remark}

\begin{figure}[ht]
\begin{center}
\includegraphics[height=8.9cm]{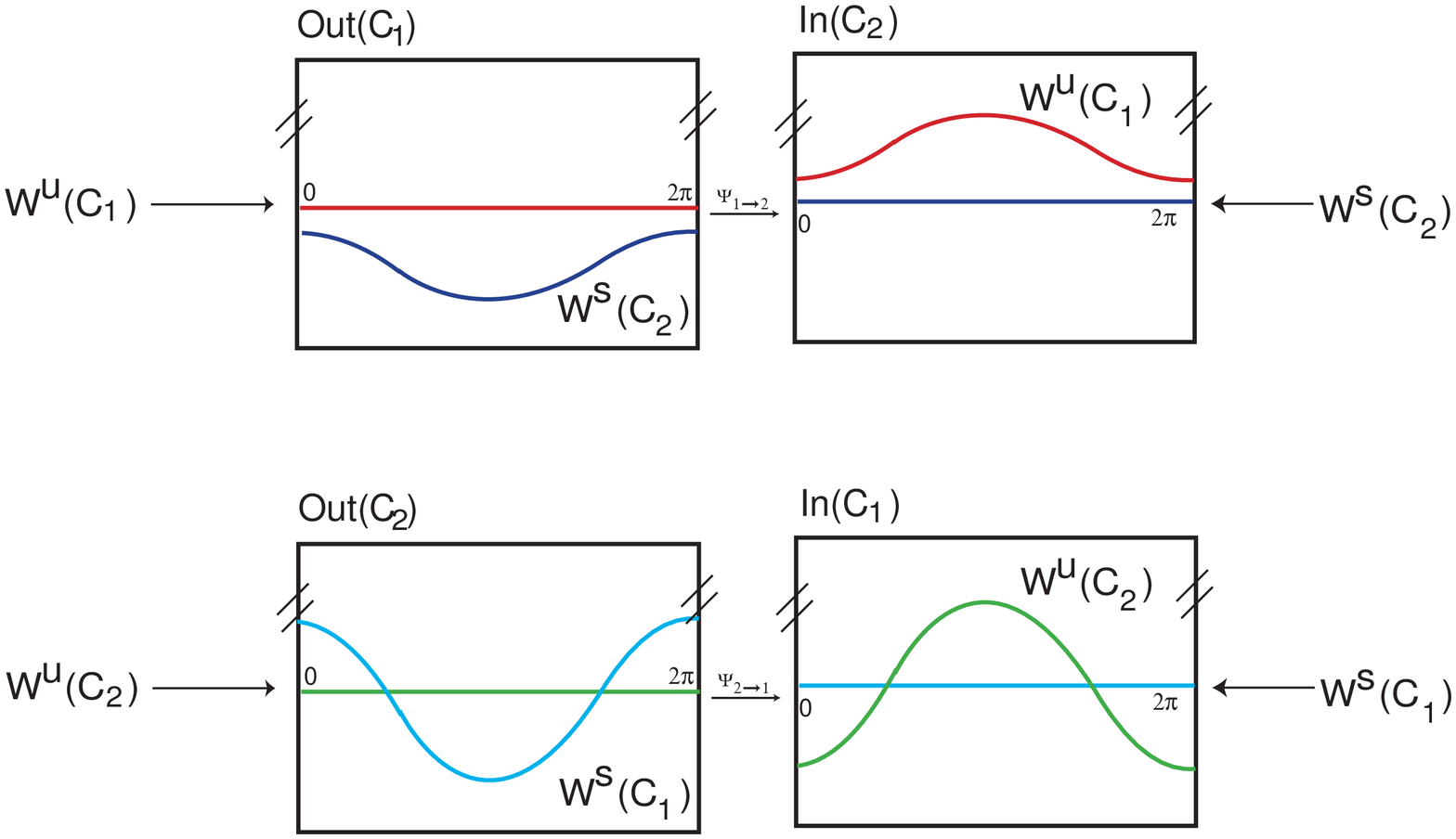}
\end{center}
\caption{\small Transition maps from $\Out(\CC_1)$ to $\In (\CC_2)$ (Case I) and from $\Out(\CC_2)$ to $\In (\CC_1)$ (Case II). In Case I: $W^u(\CC_1) \cap W^s(\CC_2) = \emptyset$. In Case II: $W^u(\CC_2) \pitchfork W^s(\CC_1)$.  Double bars mean that the sides are identified.  }
\label{manifolds1}
\end{figure}

\subsection{First return maps to a cross section}
\label{ss: 6.4}
For the flow of \eqref{general_R3}, the first return map to $\Out^+(\CC_1)$,  $$\mathcal{F}_{(\mu_1,0)}= \emph{Loc}\, _1 \circ \Psi_{2\rightarrow 1} \circ \emph{Loc}\, _2  \circ \Psi_{1\rightarrow 2}:\Out^+(\CC_1)\rightarrow \Out(\CC_1) $$ is then given by 

$$
\mathcal{F}_{(\mu_1,0)}\left(\overline{y}_1^{(1)} ,   \theta^{(1)}\right) = \left( \mathcal{F}_1  ,  \mathcal{F}_2\right) 
$$
where

\begin{eqnarray*}
    \mathcal{F}_1&=& {\varepsilon_0^{1-\delta_1}}\|\mu\|^{\delta_1-1}   \left[ b_2\varepsilon_0 ^{1-\delta_2}\| \mu \|^{\delta_2-1}   \left[ b_1\overline{y}_1^{(1)} + \frac{\mu_1}{\|\mu\|} \phi_1\left( \overline{y}_1^{(1)}, \overline\theta^{(1)}\right)\right]^{\delta_2}+ \frac{\mu_2}{\|\mu\|} \phi_2\left( \overline{y}_1^{(2)}, \overline\theta^{(2)}\right) \right]^{\delta_1}\\   \\ 
\mathcal{F}_2&=&  \overline\theta^{(1)}+ \xi_1+\xi_2 + \mu_1\psi_1\left(  \overline{y}_1^{(1)}, \overline\theta^{(1)}\right) +  \mu_2\psi_2\left( \overline{y}_1^{(2)}, \overline\theta^{(2)}\right) -
\left(\frac{\omega}{e_2+\omega_2^{(2)}} +\frac{\omega}{e_1+\omega_2^{(1)}}\right )\ln \left(\frac{\| \mu\|}{\varepsilon_0}\right) \\
&& - \frac{\omega}{e_2+\omega_2^{(2)}}\ln \left[ b_1\overline{y}_1^{(1)} + \frac{\mu_1}{\|\mu\|} \phi_1\left( \overline{y}_1^{(1)}, \overline\theta^{(1)}\right)\right] \\
&&-\frac{\omega}{e_1+\omega_2^{(1)}} \ln \left[b_2 \varepsilon_0^{1-\delta_2} \|\mu\|^{\delta_2-1}\left[ b_1\overline{y}_1^{(1)} + \frac{\mu_1}{\|\mu\|} \phi_1\left( \overline{y}_1^{(1)}, \overline\theta^{(1)}\right)\right]^{\delta_2}+ \frac{\mu_2}{\|\mu\|} \phi_2\left( \overline{y}_1^{(2)}, \overline\theta^{(2)}\right)\right].
\end{eqnarray*}

The first return map to $\Out(\CC_2)$,  $\mathcal{G}_{(0, \mu_2)}= \emph{Loc}\, _1 \circ \Psi_{1\rightarrow 2} \circ \emph{Loc}\, _1  \circ \Psi_{2\rightarrow 1} :\Out(\CC_2)\rightarrow \Out(\CC_2) $ is given by 

$$
\mathcal{G}_{(0, \mu_2)} \left(\overline{y}_1^{(2)} ,   \theta^{(2)}\right) = \left( \mathcal{G}_1  ,  \mathcal{G}_2\right) 
$$
where

\begin{eqnarray*}
    \mathcal{G}_1&=& \frac{\varepsilon_0}{\|\mu\|}  \left[b_1 \left(\frac{\|\mu\|}{\varepsilon_0}\right)^{\delta_1} \left[ b_2 \overline{y}_1^{(2)} + \frac{\mu_2}{\|\mu\|} \phi_2\left( \overline{y}_1^{(2)}, \overline\theta^{(2)}\right)\right]^{\delta_1}+ \frac{\mu_1}{\|\mu\|} \phi_1\left( \overline{y}_1^{(1)}, \overline\theta^{(1)}\right) \right]^{\delta_2}\\   \\ 
\mathcal{G}_2&=&  \overline\theta^{(2)}+ \xi_1+\xi_2 + \mu_2\psi_2\left(  \overline{y}_1^{(2)}, \overline\theta^{(2)}\right) +  \mu_1\psi_1\left( \overline{y}_1^{(1)}, \overline\theta^{(1)}\right) -
\left(\frac{\omega}{e_1+\omega_2^{(1)}} +\frac{\omega}{e_2+\omega_2^{(2)}}\right )\ln \left(\frac{\| \mu\|}{\varepsilon_0}\right) \\
&& - \frac{\omega}{e_1+\omega_2^{(1)}}\ln \left[ b_2 \overline{y}_1^{(2)} + \frac{\mu_2}{\|\mu\|} \phi_2\left( \overline{y}_1^{(2)}, \overline\theta^{(2)}\right)\right] \\
&&-\frac{\omega}{e_2+\omega_2^{(2)}} \ln \left[b_1 \varepsilon_0^{1-\delta_1} \|\mu\|^{\delta_1-1}\left[ b_2\overline{y}_1^{(2)} + \frac{\mu_2}{\|\mu\|} \phi_2\left( \overline{y}_1^{(2)}, \overline\theta^{(1)}\right)\right]^{\delta_1}+ \frac{\mu_1}{\|\mu\|} \phi_1\left( \overline{y}_1^{(1)}, \overline\theta^{(1)}\right)\right].
\end{eqnarray*}

\subsection{Simplified model for the return maps $\mathcal{F}_{(\mu_1,0)} $ and $\mathcal{G}_{(0, \mu_2)}$  }
\label{ss:simplified}
In what follows we assume, without loss of generality, that $b_1=b_2=\varepsilon_0=1$. This assumption simplifies the formulas and do not restrict the generality of the results. 

\begin{lemma}
\label{return1}
If  $b_1=b_2=\varepsilon_0=1$, then the first return map $\mathcal{F}_{(\mu_1,0)}$ to $\Out(\CC_1)$ may be written as   $\mathcal{F}_{(\mu_1,0)}=( \mathcal{F}_1,  \mathcal{F}_2)$ where:
\begin{eqnarray*}
    \mathcal{F}_1&=& \mu_1^{\delta-1} \left[ \overline{y}_1^{(1)} +   \phi_1\left( \overline{y}_1^{(1)}, \overline\theta^{(1)}\right)\right]^{\delta}\\
\mathcal{F}_2&=&  \overline\theta^{(1)}+ \xi  + \mu_1\psi_1\left(  \overline{y}_1^{(1)}, \overline\theta^{(1)}\right) -
\omega K_F \ln \left({ \mu_1} \right)   - \omega K_F  \ln \left[ \overline{y}_1^{(1)} +  \phi_1\left( \overline{y}_1^{(1)}, \overline\theta^{(1)}\right)\right] \\
\end{eqnarray*}
with $\delta=\delta_1\delta_2$ (\footnote{See equation   \eqref{delta_i} for the definitions of $\delta_1$ and $\delta_2$.}), $\xi=\xi_1+\xi_2$ and $ K_F= \frac{1}{e_2+\omega_2^{(2)}} +\frac{ \delta_2}{e_1+\omega_2^{(1)}},$ where $\omega_2^{(1)}$ and $\omega_2^{(2)}$ are the integrals defined in \eqref{normal form 4}.
\end{lemma}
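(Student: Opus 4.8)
The plan is to obtain $\mathcal{F}_{(\mu_1,0)}$ by composing the four maps $\emph{Loc}\,_1 \circ \Psi_{2\rightarrow 1} \circ \emph{Loc}\,_2 \circ \Psi_{1\rightarrow 2}$ and then simplifying under $b_1=b_2=\varepsilon_0=1$ together with $\mu_2=0$, so $\|\mu\|=\mu_1$. First I would substitute these normalizations directly into the general formulas for $\mathcal{F}_1$ and $\mathcal{F}_2$ already displayed in Subsection \ref{ss: 6.4}. Since $\mu_2=0$, every term carrying a factor $\mu_2/\|\mu\|$ vanishes, in particular the contributions of $\phi_2$ and $\psi_2$; what remains inside the iterated bracket collapses. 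With $\varepsilon_0=1$ the prefactors $\varepsilon_0^{1-\delta_i}$ become $1$, and the two factors $\|\mu\|^{\delta_1-1}$ and $(\|\mu\|^{\delta_2-1})^{\delta_1}=\|\mu\|^{\delta_1\delta_2-\delta_1}$ multiply to $\|\mu\|^{\delta_1\delta_2-1}=\mu_1^{\delta-1}$ with $\delta=\delta_1\delta_2$. The inner bracket $[b_1\overline{y}_1^{(1)} + \tfrac{\mu_1}{\|\mu\|}\phi_1]$ becomes $[\overline{y}_1^{(1)}+\phi_1]$ since $\mu_1/\|\mu\|=1$, and raising it to the power $\delta_2$ and then to $\delta_1$ gives the exponent $\delta$. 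This yields $\mathcal{F}_1 = \mu_1^{\delta-1}\bigl[\overline{y}_1^{(1)}+\phi_1(\overline{y}_1^{(1)},\overline\theta^{(1)})\bigr]^{\delta}$ as claimed.

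Next I would treat $\mathcal{F}_2$. Dropping the $\mu_2$-terms leaves $\overline\theta^{(1)} + \xi_1+\xi_2 + \mu_1\psi_1 - \bigl(\tfrac{\omega}{e_2+\omega_2^{(2)}} + \tfrac{\omega}{e_1+\omega_2^{(1)}}\bigr)\ln\|\mu\|$ minus the two logarithmic terms. Setting $\xi=\xi_1+\xi_2$ handles the constant. For the logarithms: the first is $-\tfrac{\omega}{e_2+\omega_2^{(2)}}\ln[\overline{y}_1^{(1)}+\phi_1]$, and the second, after discarding the $\mu_2$-term and using $\varepsilon_0=1$, $b_2=1$, is $-\tfrac{\omega}{e_1+\omega_2^{(1)}}\ln\bigl[\|\mu\|^{\delta_2-1}(\overline{y}_1^{(1)}+\phi_1)^{\delta_2}\bigr] = -\tfrac{\omega}{e_1+\omega_2^{(1)}}\bigl[(\delta_2-1)\ln\mu_1 + \delta_2\ln(\overline{y}_1^{(1)}+\phi_1)\bigr]$. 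Now I collect the $\ln\mu_1$ coefficients: $-\tfrac{\omega}{e_2+\omega_2^{(2)}} - \tfrac{\omega}{e_1+\omega_2^{(1)}} - \tfrac{\omega(\delta_2-1)}{e_1+\omega_2^{(1)}} = -\omega\bigl(\tfrac{1}{e_2+\omega_2^{(2)}} + \tfrac{\delta_2}{e_1+\omega_2^{(1)}}\bigr) = -\omega K_F$, using the stated definition of $K_F$. The coefficients of $\ln(\overline{y}_1^{(1)}+\phi_1)$ likewise sum to $-\tfrac{\omega}{e_2+\omega_2^{(2)}} - \tfrac{\omega\delta_2}{e_1+\omega_2^{(1)}} = -\omega K_F$. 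This produces exactly $\mathcal{F}_2 = \overline\theta^{(1)} + \xi + \mu_1\psi_1 - \omega K_F\ln\mu_1 - \omega K_F\ln[\overline{y}_1^{(1)}+\phi_1]$.

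Finally I should remark that the reduction $b_1=b_2=\varepsilon_0=1$ is genuinely without loss of generality: $\varepsilon_0$ is just the (arbitrary) size of the normal-form neighbourhood, and nonzero $b_1,b_2$ can be absorbed by a linear rescaling of the $\overline{y}_1^{(i)}$ coordinates and a shift of the $\phi_i$, neither of which affects the qualitative statements (existence of invariant curves, SRB measures, sinks) proved later. I would state this explicitly to justify the standing assumption, and note that $K_F$ here coincides in the singular limit $\mu_1\to 0$ (where $\omega_2^{(i)}\to 0$) with the constant $K_F = \tfrac{1}{e_2} + \tfrac{c_2/e_2}{e_1}$ of \eqref{formulas3.1}, since then $\delta_2\to c_2/e_2$. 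The only mild subtlety — not really an obstacle — is bookkeeping: one must be careful that the arguments $\overline{y}_1^{(1)}, \overline\theta^{(1)}$ fed into $\phi_1,\psi_1$ are consistently the $\Out(\CC_1)$ coordinates and that the dependence of $\delta_1,\delta_2$ and $\omega_2^{(i)}$ on the flight times is carried along implicitly (as already flagged in \eqref{delta_i} and Remark \ref{remark 2.3}); with that understood the computation is entirely mechanical.
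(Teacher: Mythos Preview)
Your proposal is correct and follows essentially the same approach as the paper: substitute $\mu_2=0$ (so $\|\mu\|=\mu_1$) and $b_1=b_2=\varepsilon_0=1$ into the general expressions for $\mathcal{F}_1,\mathcal{F}_2$ from Subsection~\ref{ss: 6.4}, then collect the powers of $\mu_1$ and the coefficients of the two logarithms to produce $\delta=\delta_1\delta_2$ and $K_F$. Your additional remarks on why the normalization is without loss of generality and on the limit $K_F\to \tfrac{1}{e_2}+\tfrac{c_2/e_2}{e_1}$ are correct and helpful, though the paper relegates those observations to the surrounding text rather than the proof itself.
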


\begin{proof}
First of all note that when $\mu_1>0$ and $\mu_2=0$, then $\|\mu\|=\mu_1$. 
Using the results of \S \ref{ss: 6.4}, the first return map $\mathcal{F}_{(\mu_1, 0)}= \emph{Loc}_1 \circ \Psi_{2\rightarrow 1} \circ \emph{Loc}_2  \circ \Psi_{1\rightarrow 2} $ is given by 

$$
\mathcal{F}_{(\mu_1, 0)}\left(\overline{y}_1^{(1)} ,  \theta^{(1)}\right) = \left( \mathcal{F}_1  , \mathcal{F}_2   \right) 
$$
where
\begin{eqnarray*}
    \mathcal{F}_1&=& \mu_1^{\delta-1} \left[ \overline{y}_1^{(1)} +   \phi_1\left( \overline{y}_1^{(1)}, \overline\theta^{(1)}\right)\right]^{\delta}.\\
\end{eqnarray*}

Simplifying expression of $\mathcal{F}_2$ given in \S \ref{ss: 6.4}, we get:
\begin{eqnarray*}
\mathcal{F}_2&=&  \overline\theta^{(1)}+ \xi_1+\xi_2 + \mu_1\psi_1\left(  \overline{y}_1^{(1)}, \overline\theta^{(1)}\right) -
\left(\frac{\omega}{e_2+\omega_2^{(2)}} +\frac{\omega}{e_1+\omega_2^{(1)}}\right )\ln \left({ \mu_1} \right) \\
&& - \frac{\omega}{e_2+\omega_2^{(2)}}\ln \left[ \overline{y}_1^{(1)} +  \phi_1\left( \overline{y}_1^{(1)}, \overline\theta^{(1)}\right)\right] \\
&&-\frac{\omega}{e_1+\omega_2^{(1)}} \ln \left[\mu_1^{\delta_2-1}\left[ \overline{y}_1^{(1)} +\phi_1\left( \overline{y}_1^{(1)}, \overline\theta^{(1)}\right)\right]^{\delta_2} \right]\\
 &=&  \overline\theta^{(1)}+ \xi_1+\xi_2 + \mu_1\psi_1\left(  \overline{y}_1^{(1)}, \overline\theta^{(1)}\right) -
\left(\frac{\omega}{e_2+\omega_2^{(2)}} +\frac{\omega \delta_2}{e_1+\omega_2^{(1)}}\right )\ln \left({ \mu_1} \right) \\
&& - \left(\frac{\omega}{e_2+\omega_2^{(2)}}   +\frac{\omega\delta_2}{e_1+\omega_2^{(2)}} \right)\ln \left[ y_1^{(1)} +  \phi_1\left( \overline{y}_1^{(1)}, \overline\theta^{(1)}\right)\right] \\
 &=&  \overline\theta^{(1)}+ \xi  + \mu_1\psi_1\left(  \overline{y}_1^{(1)}, \overline\theta^{(1)}\right) -
\omega K_F  \ln \left({ \mu_1} \right)   - \omega K_F  \ln \left[ y_1^{(1)} +  \phi_1\left( \overline{y}_1^{(1)}, \overline\theta^{(1)}\right)\right] \\
\end{eqnarray*}
where $\xi $ and $K_F$ are as stated. 
\end{proof}

\begin{lemma}
\label{return2}
If  $b_1=b_2=\varepsilon_0=1$, then the first return map $\mathcal{G}_{(0,\mu_2)}$ to $\Out(\CC_2)\backslash W^s(C_2)$ may be written as   $\mathcal{G}_{(0,\mu_2)}=( \mathcal{G}_1,  \mathcal{G}_2)$ where:
\begin{eqnarray*}
    \mathcal{G}_1&=& \mu_2^{\delta-1} \left[ \overline{y}_1^{(2)} +   \phi_2\left( \overline{y}_1^{(2)}, \overline\theta^{(2)}\right)\right]^{\delta}\\
\mathcal{G}_2&=&  \overline\theta^{(2)}+ \xi  + \mu_2\psi_2\left(  \overline{y}_1^{(2)}, \overline\theta^{(2)}\right) -
\omega K_G \ln \left({ \mu_2} \right)   - \omega K_G  \ln \left[ \overline{y}_1^{(2)} +  \phi_2\left( \overline{y}_1^{(2)}, \overline\theta^{(2)}\right)\right] \\
\end{eqnarray*}
where $\xi=\xi_1+\xi_2$ and $K_G = \frac{1}{e_1+\omega_2^{(1)}} +\frac{ \delta_1}{e_2+\omega_2^{(2)}}.$

\end{lemma}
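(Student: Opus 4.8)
The plan is to mirror the computation carried out in Lemma~\ref{return1} for $\mathcal{F}_{(\mu_1,0)}$, exploiting the formal symmetry between the two return maps that is already visible in \S\ref{ss: 6.4}: the expression for $\mathcal{G}_{(0,\mu_2)}$ is obtained from that of $\mathcal{F}_{(\mu_1,0)}$ by swapping the roles of the two saddles, i.e.\ interchanging the indices $1\leftrightarrow 2$ (so $\delta_1\leftrightarrow\delta_2$, $\CC_1\leftrightarrow\CC_2$, $\phi_1\leftrightarrow\phi_2$, $\psi_1\leftrightarrow\psi_2$) and setting $\mu_2>0$, $\mu_1=0$ rather than the other way around.

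First I would record that when $\mu_1=0$ and $\mu_2>0$ we have $\|\mu\|=\mu_2$, so that every occurrence of $\|\mu\|$ in the formulas of \S\ref{ss: 6.4} becomes $\mu_2$. Substituting $b_1=b_2=\varepsilon_0=1$ into the expression for $\mathcal{G}_1$ from \S\ref{ss: 6.4} collapses the prefactors $\varepsilon_0/\|\mu\|$ and $(\|\mu\|/\varepsilon_0)^{\delta_1}$, and the inner bracket $b_2\overline{y}_1^{(2)}+\tfrac{\mu_2}{\|\mu\|}\phi_2(\cdots)$ becomes simply $\overline{y}_1^{(2)}+\phi_2(\overline{y}_1^{(2)},\overline\theta^{(2)})$ since $\mu_2/\|\mu\|=1$; raising this to the power $\delta_1$ and then to the power $\delta_2$ yields the power $\delta=\delta_1\delta_2$, and collecting the powers of $\mu_2$ gives the single prefactor $\mu_2^{\delta-1}$, exactly as in the $\mathcal{F}_1$ computation. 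This produces the stated formula for $\mathcal{G}_1$.

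Next I would simplify $\mathcal{G}_2$. Setting $\mu_1=0$ kills the term $\mu_1\psi_1(\overline{y}_1^{(1)},\overline\theta^{(1)})$, leaving $\mu_2\psi_2(\overline{y}_1^{(2)},\overline\theta^{(2)})$; the constant term is $\xi_1+\xi_2=\xi$. The logarithmic terms are handled exactly as in Lemma~\ref{return1}: the $\ln(\|\mu\|/\varepsilon_0)=\ln\mu_2$ contribution, together with the extra $\ln\mu_2^{\delta_1-1}$ coming from expanding the last logarithm of the nested bracket, combine so that the coefficient of $\ln\mu_2$ is $-\omega\bigl(\tfrac{1}{e_1+\omega_2^{(1)}}+\tfrac{\delta_1}{e_2+\omega_2^{(2)}}\bigr)=-\omega K_G$; and the two logarithms in $\overline{y}_1^{(2)}+\phi_2(\cdots)$ merge with total coefficient $-\omega K_G$ as well, where $K_G=\tfrac{1}{e_1+\omega_2^{(1)}}+\tfrac{\delta_1}{e_2+\omega_2^{(2)}}$. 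The arithmetic is identical to the display chain in the proof of Lemma~\ref{return1} with the index swap $1\leftrightarrow 2$, so one could even phrase the proof as ``by the symmetry of the construction under interchanging $\CC_1$ and $\CC_2$, apply Lemma~\ref{return1}.''

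I do not expect any genuine obstacle: this is a bookkeeping lemma and the only thing requiring care is keeping the index convention straight when matching $\delta=\delta_1\delta_2$ (commutative, so no ambiguity) and verifying that the asymmetric combination defining $K_G$ really is $\tfrac{1}{e_1+\omega_2^{(1)}}+\tfrac{\delta_1}{e_2+\omega_2^{(2)}}$ and not its mirror $\tfrac{1}{e_2+\omega_2^{(2)}}+\tfrac{\delta_2}{e_1+\omega_2^{(1)}}$. The correct order is dictated by which local map is applied last in the composition $\mathcal{G}_{(0,\mu_2)}=\emph{Loc}_1\circ\Psi_{1\rightarrow2}\circ\emph{Loc}_1\circ\Psi_{2\rightarrow1}$ (note the cross section is $\Out(\CC_2)$, so the first expansion encountered going forward is governed by $\delta_1$ near $\CC_1$), and one simply reads it off the $\mathcal{G}_2$ formula of \S\ref{ss: 6.4}.
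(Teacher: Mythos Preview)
Your proposal is correct and takes essentially the same approach as the paper, which simply states that the proof is omitted since it is similar to that of Lemma~\ref{return1}. Your write-up in fact supplies more detail than the paper does, correctly identifying the index swap $1\leftrightarrow 2$ and checking that the resulting constant is $K_G=\tfrac{1}{e_1+\omega_2^{(1)}}+\tfrac{\delta_1}{e_2+\omega_2^{(2)}}$ rather than its mirror.
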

We omit the proof of Lemma \ref{return2} since it is similar to that of Lemma \ref{return1}.

\begin{remark}
Using  the integrals defined in \eqref{w_i expression}, note that when $\mu_1=\mu_2=0$, the formulas $\omega_2^{(1)} = \omega_2^{(2)} \equiv 0$. For this case, we obtain formulas \eqref{formulas3.1}:
$$
 K_F= \frac{1}{e_2 } +\frac{ \delta_2}{e_1}= \frac{e_1+c_2}{e_1e_2}\neq 0 \qquad \text{and} \qquad K_G=  \frac{1}{e_1} +\frac{ \delta_1}{e_2}=\frac{e_2+c_1}{e_1e_2} \neq 0 .
$$

\end{remark}

In order to improve the readability of the manuscript, we use the   the  terminology of Table 3 in the  Sections \ref{s: Proof Theorems A and B}--\ref{Proof_ThE}: 

\begin{table}[htb]
\begin{center}
\begin{tabular}{|c|c|} \hline 
&\\
$\mathcal{F}_{(\mu_1,0)} \mapsto \mathcal{F}_{\mu}$ & \quad Sections \ref{s: Proof Theorems A and B} and \ref{Proof_ThC} \qquad \\
&\\
\hline \hline
&\\
$\mathcal{G}_{(0,\mu_2)}\mapsto \mathcal{G}_{\mu}$ & \quad Sections \ref{s: proof Th D} and \ref{Proof_ThE} \qquad \\  &\\
 \hline \hline 
 \end{tabular}
\end{center}
\label{notationB}
\bigskip
\caption{\small Notation for the next sections where $\mu_1, \mu_2\in\, \,  ]0, \varepsilon]$.}
\end{table}

\section{Proofs of Theorems \ref{thm:B} and \ref{thm:F} }
\label{s: Proof Theorems A and B}
This case reports the scenario described by $\mu_1>0$ and  $\mu_2=0$. This is why we use the first return map $\mathcal{F}_{(\mu_1, 0)} \mapsto \mathcal{F}_\mu $ (cf. Table 3).

\subsection{The singular limit}
\label{ss: singular limit}
Let  $k: \RR^+ \rightarrow \RR$ be the invertible map defined by $$k(x)= -\omega K_F  \ln (x).$$
For $\mu_0<\varepsilon$, define now the decreasing sequence $(\mu_{n})_n$ such that, for all $n\in \NN$, we have: \\
\begin{enumerate}
\item  $\mu_{n} \in\, ]0,  \mu_0[ $ and \\
\item $k(\mu_{n}) \equiv 0 \mod 2\pi$.
\end{enumerate}
\bigbreak

\noindent Since $k$ is an invertible map,  for $a \in \EU^1  $ fixed and $n\geq n_0\in \NN$, let  
\begin{equation}
\label{sequence1}
\mu_{(a, n)}= k^{-1}(k(\mu_{n})+a)\,\,  \in \,\, ]0, \mu_0[.
\end{equation}
It is easy to check that: 
\begin{equation}
\label{sequence2}
k(\mu_{(a, n)})= -\omega K_F  \ln (\mu_{n} )+a=a \mod 2\pi.
\end{equation}
Define $\mathcal{F}_{(a, \mu_{(a, n)})}$ as $\mathcal{F}_{\mu_{(a, n)}}$. 
The following proposition establishes $\CC^3$--convergence to a singular limit as $n \rightarrow +\infty$.

\begin{lemma} 
\label{important lemma}
In the $C^3$--norm, for $a\in \EU^1$, the following equality holds:
$$
\lim_{n\in \NN} \|\mathcal{F}_{(a, \mu_{(a, n)})}   -(\textbf{0}, h_a)\| =0$$
where $\textbf{0}$ is the null map and 
\begin{equation}
\label{circle map}
h_a \left(  \overline\theta^{(1)}\right)=  \overline\theta^{(1)}+ \xi   +
a   - \omega K_F  \ln \left[    \phi_1\left( 0, \overline\theta^{(1)}\right)\right] . 
\end{equation}
\end{lemma}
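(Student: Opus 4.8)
The plan is to take the explicit formula for $\mathcal{F}_{(\mu_1,0)}=(\mathcal{F}_1,\mathcal{F}_2)$ from Lemma~\ref{return1}, substitute $\mu_1=\mu_{(a,n)}$, and show that each component converges in $C^3$ to the stated limit as $n\to+\infty$. First I would observe that, along the sequence $(\mu_{(a,n)})_n$, we have $\mu_{(a,n)}\to 0$ (since $k(x)=-\omega K_F\ln x\to+\infty$ as $x\to 0^+$, the preimages $k^{-1}(k(\mu_n)+a)$ must tend to $0$ because $k(\mu_n)=2\pi\cdot(\text{integer})\to+\infty$). Consequently $\delta=\delta_1\delta_2\to (c_1/e_1)(c_2/e_2)>1$, the integrals $\omega_2^{(1)},\omega_2^{(2)}\to 0$ so $K_F\to\frac{1}{e_2}+\frac{\delta_2}{e_1}$, and all the $C^3$--small remainder terms $w_j^{(i)}$ (controlled by Proposition following \eqref{w_i expression}) vanish with $\|\mu\|$.

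For the first component, $\mathcal{F}_1=\mu_1^{\delta-1}\bigl[\overline y_1^{(1)}+\phi_1(\overline y_1^{(1)},\overline\theta^{(1)})\bigr]^{\delta}$. Here $\phi_1(\overline y_1^{(1)},\overline\theta^{(1)})=\Phi_1(\|\mu\|\overline y_1^{(1)},\overline\theta^{(1)})$; since $\Phi_1$ takes values in $\RR^+$ and is $C^3$, the bracket stays bounded away from $0$ and bounded above on the (compact) domain $0\le\overline y_1^{(1)}\le C_1'$, uniformly in $n$. Because $\delta-1>c>0$ is bounded below, the prefactor $\mu_1^{\delta-1}\to 0$, and a routine estimate of the derivatives up to order three (the bracket and its derivatives are uniformly bounded, and $\mu_1^{\delta-1}\to0$ together with its controlled dependence) gives $\|\mathcal{F}_1\|_{C^3}\to 0$, i.e.\ convergence to the null map $\mathbf 0$. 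The one point requiring a little care is that $\delta$ itself depends on $(y,\theta)$ through the $w_i^{(i)}$, so differentiating $\mu_1^{\delta-1}$ produces factors of $\ln\mu_1\cdot\partial\delta$; but $\partial\delta=O(\|\mu\|)$ while $\mu_1^{\delta-1}|\ln\mu_1|^3\to 0$, so these terms are still negligible.

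For the second component, $\mathcal{F}_2=\overline\theta^{(1)}+\xi+\mu_1\psi_1(\overline y_1^{(1)},\overline\theta^{(1)})-\omega K_F\ln(\mu_1)-\omega K_F\ln[\overline y_1^{(1)}+\phi_1(\overline y_1^{(1)},\overline\theta^{(1)})]$. By \eqref{sequence2}, $-\omega K_F^0\ln(\mu_{(a,n)})\equiv a\pmod{2\pi}$ where $K_F^0=\frac{1}{e_2}+\frac{\delta_2}{e_1}$ is the limiting value; since $\mathcal{F}_2$ is only meaningful mod $2\pi$ (it is an angular coordinate on $\EU^1$), the term $-\omega K_F\ln\mu_1$ converges to $a$ modulo $2\pi$ — one must check that $(K_F-K_F^0)\ln\mu_1\to 0$, which holds because $K_F-K_F^0=O(\|\mu\|)$ whereas $\|\mu\|\,|\ln\mu_1|\to 0$ (and similarly for its derivatives). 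The term $\mu_1\psi_1\to 0$ in $C^3$ since $\psi_1$ is $C^1$... here one needs the stronger $C^3$ bound on $\Psi_1$; I would note $\psi_1(\overline y_1^{(1)},\overline\theta^{(1)})=\Psi_1(\|\mu\|\overline y_1^{(1)},\overline\theta^{(1)})$ and invoke whatever $C^3$ regularity of $\Psi_1$ is actually available (or restrict to $C^1$ convergence if that is all the hypotheses give, which still suffices for the intended applications). Finally, $\phi_1(\overline y_1^{(1)},\overline\theta^{(1)})=\Phi_1(\|\mu\|\overline y_1^{(1)},\overline\theta^{(1)})\to\Phi_1(0,\overline\theta^{(1)})$ in $C^3$ as $\|\mu\|\to 0$ (chain rule: each $\|\mu\|$-derivative brings down a factor $\|\mu\|$), so $\overline y_1^{(1)}+\phi_1\to\Phi_1(0,\overline\theta^{(1)})$ uniformly with derivatives, hence $\ln[\cdots]\to\ln[\Phi_1(0,\overline\theta^{(1)})]$ in $C^3$ (using that $\Phi_1(0,\cdot)$ is bounded away from $0$). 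Assembling these pieces gives $\mathcal{F}_2\to h_a$ in $C^3$ with $h_a$ as in \eqref{circle map}.

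The main obstacle is the interplay between the vanishing prefactor $\mu_1^{\delta-1}$ (resp.\ the diverging $\ln\mu_1$) and the $(y,\theta)$--dependence of $\delta$, $K_F$ and the $w_i^{(i)}$: one must verify that every derivative of these "slowly varying" coefficients is $O(\|\mu\|)$ and therefore beats the logarithmic divergence, so that $\|\mu\|\,|\ln\mu_1|^k\to 0$ for $k\le 3$ is the quantitative fact doing all the work. This is exactly where the $C^3$--control of $w_j^{(i)}$ furnished by the Propositions of Section~\ref{first return map} is essential; modulo that input, the rest is a bookkeeping of chain-rule estimates, which I would present compactly rather than term by term.
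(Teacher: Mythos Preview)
Your approach is exactly the paper's: invoke the identity $k(\mu_{(a,n)})\equiv a\pmod{2\pi}$ for the angular shift, and use the explicit formula of Lemma~\ref{return1} together with the $C^3$--controls on $w_j^{(i)}$, $\delta$, $K_F$ (Proposition~\ref{local map prop} and the $C^3$ estimates of Section~\ref{first return map}, ultimately Lemma~7.4 of \cite{WO}) to pass to the limit term by term. The paper's own proof is a one--line reference to precisely these ingredients; you have simply unpacked them, and your identification of the key quantitative fact $\|\mu\|\,|\ln\mu_1|^k\to 0$ (to kill the $(y,\theta)$--dependence of $\delta$ and $K_F$) is the right emphasis.

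One small slip: when you write ``$\overline y_1^{(1)}+\phi_1\to\Phi_1(0,\overline\theta^{(1)})$'', the additive $\overline y_1^{(1)}$ does \emph{not} vanish in the limit --- the genuine singular limit of $\mathcal F_2$ as a function of $(\overline y_1^{(1)},\overline\theta^{(1)})$ is $\overline\theta^{(1)}+\xi+a-\omega K_F\ln\bigl[\overline y_1^{(1)}+\Phi_1(0,\overline\theta^{(1)})\bigr]$. The map $h_a$ in \eqref{circle map} is its restriction to $\overline y_1^{(1)}=0$ (this is exactly how Remark~\ref{rem7.2} reads it, and is what the rank--one framework \textbf{(H2)}, Remark~\ref{terminologia1} requires). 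So the statement of the lemma is to be understood in that sense; your computation is correct once you keep the $\overline y_1^{(1)}$ term and then set it to zero at the end.
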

\begin{proof}
The proof of this lemma follows from the fact  that $k(\mu_{(a, n)})=a \mod{2\pi}$. The $C^3$--convergence is a consequence of Hypothesis \textbf{(P2)}, Proposition \ref{local map prop} and Lemma \ref{return1}.  See also Lemma 7.4  of  \cite{WO}.
 \end{proof}

\begin{remark}
\label{rem7.2}The map $  h_a\left(\theta^{(1)}\right) =\theta^{(1)}+\xi+a  -\omega K_F \ln  \left(\phi_1(\theta^{(1)})\right)\equiv  \mathcal{F}_{(a, \mu_{(a, n)})}^1 \left(0, \theta^{(1)}\right)$ is a Morse function with finitely  many nondegenerate critical points (by Hypothesis \textbf{(P7a)}).
\end{remark}

\subsection{Verification of the hypotheses of the theory of rank-one maps.}
\label{ss:verification}
From now on, our focus will be the sequence of two-dimensional maps 
\begin{equation}
\label{family}
\mathcal{F}_{(a,b)}=  \mathcal{F}_{(a, \mu_{(a, n)})}\qquad \text{with} \qquad n \in \mathbb{N} \qquad \text{and} \qquad a \in \EU^1 \text{ fixed}.
\end{equation}
 Since our starting point  is an attracting heteroclinic cycle  (for $\mu_1=\mu_2=0$), the \emph{absorbing sets} defined in Subsection 2.4 of \cite{WY} follow from the existence of the attracting annular region. Now, we show that the family of maps \eqref{family}  satisfies Hypotheses \textbf{(H1)--(H6)} stated in Subsection \ref{rank_one}.

\medbreak
\begin{description}
\item[\text{(H1)}]  
The first two items are immediate.  We establish the distortion bound \textbf{(H1)(3)} by studying $D\mathcal{F}_{(a, \mu_{(n,a)})}$. Direct computation  implies that for every $\mu \in (0, \tilde\mu)$ and $\left( \overline{y}_1^{(1)}, \overline\theta^{(1)}\right) \in \Out(\CC_1)\cap \mathbb{V}^\star$, one gets:

 $$|\det D \mathcal{F}_{(a, \mu(n,a))}\left( \overline{y}_1^{(1)}, \overline\theta^{(1)}\right)| = |\det \emph{Loc}\, _1|| \det  \Psi_{2\rightarrow 1}|| \det  \emph{Loc}\, _2 ||\det  \Psi_{1\rightarrow 2} |$$
where
\begin{eqnarray*}
\left| \det  \Psi_{1\rightarrow 2} \left( \overline{y}_1^{(1)}, \overline\theta^{(1)}\right)\right|& = & \left|
\left(b_1 +\frac{\partial \phi_1}{\partial y}\right)\left(1+\mu_1 \frac{\partial \Psi_1 }{\partial \theta^{(1)}}\right)
 -\mu_1 \frac{\partial \psi_1}{\partial \overline{y}^{(1)}} \frac{\partial \phi_1}{\partial \overline\theta^{(1)}}\right| \\
\left| \det \emph{Loc}\, _2 \left(  {y}_2^{(2)},  \theta^{(2)}\right)\right|& = & \left| \mu_1^{\delta_2-1} \left(y_2^{(2)}\right)^{\delta_2-1} \right| \\
\left| \det  \Psi_{2\rightarrow 1} \left( \overline{y}_1^{(2)}, \overline\theta^{(2)}\right)\right|& = & \left|\left(b_2 +\frac{\partial \phi_2}{\partial y}\right)\left(1+\mu_1 \frac{\partial \Psi_2 }{\partial \theta^{(1)}}\right) -\mu_1 \frac{\partial \psi_2}{\partial \overline{y}^{(2)}} \frac{\partial \phi_2}{\partial \overline\theta^{(2)}}\right| \\
\left| \det \emph{Loc}\, _1 \left(  {y}_2^{(1)},  \theta^{(1)}\right)\right|& = & \left| \mu_1^{\delta_1-1} \left(y_2^{(1)}\right)^{\delta_1-1} \right| \\
  \end{eqnarray*}

Since  $\left(y_2^{(2)}\right)^{\delta_2-1}, \left(y_2^{(1)}\right)^{\delta_1-1}$ are positive and $b_1, b_2\neq 0$ (because $c_1, c_2\neq 0$ in \textbf{(P6)})  we conclude that there exists $\mu^\star>0$ small enough such that:
$$
\forall \mu \in \, ]\, 0, \mu^\star \, [, \qquad \left| \det D \mathcal{F}_{(a, \mu_{(n,a)})}\left( \overline{y}_1^{(1)}, \overline\theta^{(1)}\right)\right|  \in  \,\,  ]\, k_1^{-1}, k_1\, [ ,
$$
for some $k_1>1$. This implies that hypothesis \textbf{(H1)(3)}  is satisfied.
\bigbreak
\item[\text{(H2) and (H3)}] It follows from Lemma \ref{important lemma} where $b=\mu_{(n,a)}$ (see \eqref{family}). 

\bigbreak
\item[\text{(H4) and (H5)}] 
These hypotheses are connected with the family of circle maps $$h_a: \EU^1 \rightarrow \EU^1$$ defined in Remark \ref{rem7.2}. We now use the   following result:

\begin{proposition}[\cite{WY2003}, adapted]
\label{Prop 7.3}
Let  $\Phi: \EU^1 \rightarrow \RR$ be a $C^3$ function with nondegenerate critical points. Then there exist $L_1$ and $\delta$ such that
of $L\geq L_1$ and $\Psi: \EU^1 \rightarrow \RR$  is a $\CC^3$ map with $\|\Psi\|_{C^2}\leq \delta$ and $\|\Psi\|_{C^3}\leq 1$, then the family
$$
h_a(\theta)=\theta+a+L(\Phi(\theta) +\Psi(\theta)),\qquad a\in \EU^1
$$
satisfies \textbf{(H4) and (H5)}. If $L$ is sufficiently large, then    \textbf{(H7)} is also verified. 
\end{proposition}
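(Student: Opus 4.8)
\textbf{Plan of proof for Proposition \ref{Prop 7.3}.}
The strategy is to reduce the verification of \textbf{(H4)}, \textbf{(H5)} and \textbf{(H7)} to standard facts about one-parameter families of circle endomorphisms with a large ``twist'' term, following the argument in \cite{WY2003}. The point is that, once $L$ is large, the map $\theta\mapsto L(\Phi(\theta)+\Psi(\theta))$ dominates and the family $h_a$ behaves, after rescaling, like a strongly expanding circle map outside small neighbourhoods of the critical points of $\Phi$, while the perturbation $\Psi$, being small in $C^2$ and bounded in $C^3$, cannot destroy the non-degeneracy of these critical points nor the relevant transversality.

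\textbf{Step 1 (locating and controlling the critical set).}
First I would observe that $h_a'(\theta)=1+L(\Phi'(\theta)+\Psi'(\theta))$, so the critical set $C(h_a)$ consists of the solutions of $\Phi'(\theta)+\Psi'(\theta)=-1/L$. Since $\Phi$ has finitely many non-degenerate critical points, $\Phi'$ has simple zeros; for $L$ large and $\|\Psi\|_{C^2}\le\delta$ small, the implicit function theorem gives, near each zero $c$ of $\Phi'$, a unique critical point $c(a)$ of $h_a$ depending smoothly on $a$, with $h_a''(c(a))=L(\Phi''(c)+\Psi''(c))+o(1)\neq 0$ because $\Phi''(c)\neq 0$ and $\delta$ is small. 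This is exactly condition \textbf{(H6)}-type non-degeneracy at the level of the singular map, and it also shows $C(h_a)$ is finite with $q$ points, all non-degenerate, which is the first half of \textbf{(H4)}.

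\textbf{Step 2 (Misiurewicz expansion, i.e.\ \textbf{(H4)}).}
Next I would show $h_{a^\star}\in\mathcal{E}$ for a suitable $a^\star$. Away from $C_\delta$ one has $|h_{a^\star}'(\theta)|\ge L|\Phi'(\theta)+\Psi'(\theta)|-1\ge cL\delta-1$, which exceeds any prescribed $\exp(\lambda_0)$ once $L\ge L_1$; this gives the uniform exponential growth estimates in part (2) of the definition of Misiurewicz-type map. For part (1)(b) one must ensure the critical orbits stay a definite distance from $C$: here I would use the parameter $a$ as in \cite{WY2003} — the term $a$ translates the whole graph, so one can choose $a^\star$ so that the (finitely many) forward images $h_{a^\star}^n(c)$ of each critical point avoid $C_{\delta_0}$; since outside $C_\delta$ the dynamics is uniformly expanding, only finitely many iterates need to be controlled before the orbit is trapped in the expanding region, and a transversality/Baire argument in $a$ produces such $a^\star$. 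Part (1)(a) is immediate from Step 1.

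\textbf{Step 3 (parameter transversality \textbf{(H5)}, and \textbf{(H7)}).}
For \textbf{(H5)} I would differentiate in $a$: since $\partial_a h_a(\theta)=1$ identically, the critical value $h_a(c(a))$ moves with speed $1+h_a'(c(a))\,\dot c(a)=1$ (the second term vanishes as $c(a)$ is critical), whereas the continuation $p(a)$ of the itinerary-matched point moves with a speed determined by the expanding branches, which one computes to be $O(1/L)$ or at any rate bounded away from $1$ for $L$ large; hence $\frac{d}{da}h_a(c(a))\ne\frac{d}{da}p(a)$ at $a=a^\star$. For \textbf{(H7)}, enlarging $L$ makes $\lambda_0$ as large as we wish (Step 2 shows $\lambda_0\gtrsim\log(cL\delta)$), so $\exp(\lambda_0/3)>2$ holds; and since a uniformly expanding circle map with a large expansion constant has all its monotonicity branches eventually covering the whole circle, the transition matrix $Q$ satisfies $Q^N>0$ for some $N$. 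I expect the \textbf{main obstacle} to be Step 2(1)(b): arranging that the critical orbits stay uniformly away from $C$ is not automatic and requires the careful selection of $a^\star$ — this is precisely the delicate part of the Misiurewicz construction, and I would lean on the corresponding argument in \cite{WY2003} rather than redo it in full.
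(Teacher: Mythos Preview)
The paper does not prove Proposition~\ref{Prop 7.3} at all: it is quoted verbatim as an adapted statement from \cite{WY2003} and used as a black box in the verification of \textbf{(H4)}, \textbf{(H5)} and \textbf{(H7)}. Your sketch is therefore not being compared against a proof in the paper but against the original Wang--Young argument, and your outline follows that argument faithfully; the identification of Step~2(1)(b) as the delicate point, with the parameter $a$ used to steer critical orbits away from $C_{\delta_0}$, is exactly right.

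One small simplification you can make: since $h_a'(\theta)=1+L(\Phi'(\theta)+\Psi'(\theta))$ does not involve $a$, the critical set $C(h_a)$ is actually \emph{independent} of $a$. So in Step~1 there is no need to track a continuation $c(a)$, and in Step~3 the computation $\frac{d}{da}h_a(c(a))=1$ holds trivially because $\dot c(a)=0$, not merely because $h_a'(c(a))=0$. This also cleans up the transversality computation: the quantity $\xi(c)$ in Definition~\ref{def:admissible} reduces to the series $\sum_{k\ge 0}\big[(h_{a^\star}^k)'(h_{a^\star}(c))\big]^{-1}$, whose leading term is $1$ and whose tail is $O(L^{-1})$ by the expansion from Step~2, giving $\xi(c)\ne 0$ directly.
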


\noindent  It is immediate to check that the family $h_a$
satisfies Properties \textbf{(H4)} and \textbf{(H5)}.

\bigbreak

\item[\text{(H6)}] The computation follows  from direct computation using  the expression of $\mathcal{F}_\mu\left(\overline{y}_1^{(1)},\overline{\theta}^{(1)}\right)$. Indeed, for each $\theta\in C_{a^\star}$ (set of critical points of $h_{a^\star}$ defined in \eqref{circle map}), we have
$$
\frac{d}{dy} \mathcal{F}_{(a,\mu_{(n,a)})}\left(\overline{y}_1^{(1)},\theta^{(1)} \right) \big|_{\overline{y}_1^{(1)}=0} \neq 0. $$

\bigbreak

\item[\text{(H7)}] It follows from Proposition \ref{Prop 7.3} if $\omega$ is large enough.

\end{description}

\bigbreak
We apply the theory developed by \cite{WY} to prove Theorems \ref{thm:B} and   \ref{thm:F}.

\subsection{Proof of Theorem \ref{thm:B}: attracting torus}
\label{Proof_ThA}
The map $h_a(\theta)=\theta+ \xi   +
a   - \omega K_F  \ln \left[    \phi_1\left(\theta\right)\right] $ is a diffeomorphism on the circle   if and only if:
 
\begin{eqnarray*}
h_a'(\theta)>0 &\Leftrightarrow&  1- \omega K_F  \frac{\phi_1'(\theta)}{\phi_1(\theta)}>0 \\ \\
&\overset{\eqref{global6.9}}{\Leftrightarrow}  &  1- \omega K_F  \frac{\Phi_1'(\theta)}{\Phi_1(\theta)}>0 \\ \\
 &\Leftrightarrow&  \omega\times  \sup_{\theta\in \EU^1}\frac{\Phi_1'(\theta)}{\Phi_1(\theta)}<1/ K_F .
\end{eqnarray*}
In particular, if $\dpt \omega\times  \sup_{\theta\in \EU^1}\frac{\Phi_1'(\theta)}{\Phi_1(\theta)}<\frac{1}{ K_F}$, the map $h_a$ is a diffeomorphism on $\mathcal{C}$  ($\Rightarrow$ the flow of \eqref{general_R3} has an invariant torus). The circle $\mathcal{C}$  is attracting by Lemma \ref{important lemma} and it is not contractible because it may be seen as the graph of a map. Theorem \ref{thm:B} is proved.
 
\subsection{Proof of  Theorem \ref{thm:F}: rank-one strange attractors}
\label{Proof_ThB}
 Since the family $\mathcal{F}_{(a, \mu(n,a))}$ satisfies \textbf{(H1)--(H7)} then, for $\mu^+=\min\{\varepsilon, \mu^\star\}>0$ and $\omega\gg 1$,  there exists a subset of $\Delta \subset [0, \mu^+]$ with positive Lebesgue
measure such that for $\mu\in \Delta$, the map $\mathcal{F}_{\mu}$ admits a strange attractor in 
$$ \Omega \subset \bigcap_{m=0}^{+\infty}  \mathcal{F}_{ \mu}^m(\Out^+(\CC_1))$$   supporting a unique ergodic SRB measure $\nu$. Denoting by $\text{Leb}_1$ the one-dimensional Lebesgue measure, from  the reasoning of   \cite[Sec. 3]{WO}, we have:  
\begin{equation}
\label{abunda2}
\liminf_{r\rightarrow 0^+}\, \,  \frac{ {Leb}_1 \left\{\mu \in [0,r]\cap \Delta: \mathcal{F}_\mu  \text{  has a strange attractor with a SRB measure}\right\}}{r}  >0.
\end{equation}
 Theorem \ref{thm:F} is shown.

 \subsection*{Technical remarks}

 \begin{enumerate}
 \item Throughout the proof, it is essential that the domain of definition of $\mathcal{F}_\mu$ is diffeomorphic to a cylinder. Otherwise the results of \cite{WO} cannot be applied. \\

\item   The SRB measure $\nu$ obtained in this result  is global in the sense that almost every point in $\Omega$ is generic with respect to $\nu$.
The orbit of Lebesgue almost all points in ${\Omega}$ has positive Lyapunov exponent  and is asymptotically distributed according to $\nu$.    \\

\item
 The strange attractor $\Omega$ is non-uniformly hyperbolic, non-structurally stable and is the limit of an increasing sequence of uniformly hyperbolic invariant sets.
\end{enumerate}

\section{Proof of  Theorem \ref{thm:C}}
\label{Proof_ThC}
In order to prove  Theorem \ref{thm:C}, we  use  Theorem \ref{th_review2}.
We make use of the fact that the family $h_a: \EU^1\rightarrow \EU^1$, $a\in \EU^1$, is admissible (see \S \ref{ss:verification}). 
In particular, there exists $a^\star \in \EU^1$ such that $h_{a^\star} \in \mathcal{E}$ (is a Misiurewicz map).
In this section we assume the following technical hypothesis on $\lambda_0>0$ (this constant comes from the definition of  Misiurewicz-type map of  \S \ref{def:admissible}). 
\bigbreak
 \begin{description}
\item[(TH)] $\exp(\lambda_0)>{2}$.
\end{description}

\bigbreak

Let $S_i$ be one of the connected components of $C_\delta^{(i)}\backslash\{c^{(i)}\}$, $i=1,..., q$. This interval is contained  in one monotonicity interval of $h_{a^\star}$ (see \textbf{(H7)}).
\begin{lemma}
\label{lemma1}
Let ${a^\star}\in \EU^1$ be such that $h_{a^\star}\in \mathcal{E}$. Then, there exists $m_1 \in \NN$ such that the following conditions hold:\\
\begin{enumerate}
\item $h_{a^\star}^{k} (S_i)\cap C_\delta = \emptyset$, for all $k\in \{1, ..., m_1-1\}$ and\\
\item $h_{a^\star}^{m_1} (S_i)\cap C_\delta \neq \emptyset$.
\end{enumerate}
\end{lemma}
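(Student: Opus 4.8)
\textbf{Proof plan for Lemma \ref{lemma1}.}

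The plan is to exploit the two defining properties of a Misiurewicz-type map $h_{a^\star}\in\mathcal{E}$ from Subsection \ref{Misiurewicz-type map}, namely the uniform exponential expansion outside $C_\delta$ and the fact that critical orbits stay a definite distance $\delta_0$ away from $C$. First I would fix $\delta<\delta_0$ and pick $S_i$, one of the two connected components of $C_\delta^{(i)}\setminus\{c^{(i)}\}$; by \textbf{(H7)} this interval lies inside a single monotonicity interval of $h_{a^\star}$, so $h_{a^\star}$ restricted to (iterates of) $S_i$ is injective as long as the images avoid $C$. The key quantitative observation is that $S_i$ is an interval of definite length: since $c^{(i)}$ is a non-degenerate critical point and $C_\delta^{(i)}$ is the $\delta$-component of $C_\delta$ about it, $S_i$ has length comparable to $\delta$, bounded below by some $\kappa\delta$.

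The heart of the argument is a length-growth estimate. Suppose, for contradiction (or for the inductive hypothesis), that $h_{a^\star}^k(S_i)\cap C_\delta=\emptyset$ for all $k\in\{0,1,\dots,n-1\}$. Then along $S_i$ every point has its first $n$ iterates outside $C_\delta$, so property (2a) of the definition of $\mathcal{E}$ gives $|(h_{a^\star}^n)'(\theta)|\geq b_0\,\delta\,\exp(\lambda_0 n)$ for every $\theta\in S_i$. By the mean value theorem and injectivity of $h_{a^\star}^n$ on $S_i$ (guaranteed because the intermediate images avoid $C\subset C_\delta$ and stay in one monotonicity branch at each step — here \textbf{(H7)} and the avoidance hypothesis are used together), the length of $h_{a^\star}^n(S_i)$ is at least $b_0\,\delta\,\exp(\lambda_0 n)\cdot |S_i|\geq b_0\kappa\,\delta^2\,\exp(\lambda_0 n)$. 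Since $\EU^1$ has finite total length, say $\mathrm{length}(\EU^1)=L$, this forces
$$
b_0\kappa\,\delta^2\,\exp(\lambda_0 n)\leq L,
$$
which fails once $n$ is large enough. Hence there is a smallest integer $m_1\in\NN$ with $h_{a^\star}^{m_1}(S_i)\cap C_\delta\neq\emptyset$, and by minimality $h_{a^\star}^{k}(S_i)\cap C_\delta=\emptyset$ for $k\in\{1,\dots,m_1-1\}$; a priori one should also note $k=0$ is excluded since $S_i\subset C_\delta^{(i)}$, but the statement only asks about $k\geq 1$, so this is automatic. One takes $m_1=m_1(i)$; if a uniform $m_1$ is desired, set it to be the maximum over the finitely many indices $i\in\{1,\dots,q\}$.

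The main obstacle I anticipate is the injectivity/monotonicity bookkeeping: to pass from the pointwise derivative bound to a genuine length bound for $h_{a^\star}^n(S_i)$ one needs $h_{a^\star}^n$ to be monotone on $S_i$, and this can only be asserted while the orbit of $S_i$ does not cross a critical point. This is exactly why the hypothesis ``$h_{a^\star}^k(S_i)\cap C_\delta=\emptyset$'' (which in particular keeps the images away from $C$) must be carried along in the induction, and why the conclusion is phrased with the first return to $C_\delta$ rather than to $C$ itself. A secondary subtlety is ensuring $S_i$ genuinely has length $\gtrsim\delta$; this follows from the non-degeneracy of the critical point $c^{(i)}$ together with part (1a) of the definition of $\mathcal{E}$ (which gives $h''\neq 0$, hence a genuine fold of controlled size, on $C_{\delta_0}$), but it is worth stating explicitly. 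Everything else is the standard finite-length-versus-exponential-growth contradiction.
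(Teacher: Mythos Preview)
Your approach is essentially the same as the paper's: assume the iterates of $S_i$ never meet $C_\delta$, use the expansion estimate (2a) from the definition of $\mathcal{E}$ to force the length of $h_{a^\star}^n(S_i)$ to grow exponentially, and contradict the finite length of $\EU^1$. One correction is needed, however. You apply (2a) directly to $\theta\in S_i$ under the hypothesis ``$h_{a^\star}^k(S_i)\cap C_\delta=\emptyset$ for all $k\in\{0,1,\dots,n-1\}$'', but this hypothesis is false at $k=0$: by construction $S_i\subset C_\delta^{(i)}\subset C_\delta$, so $h_{a^\star}^0(\theta)=\theta\in C_\delta$ and the premise of (2a) fails. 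You even notice this later (``$k=0$ is excluded since $S_i\subset C_\delta^{(i)}$'') but do not adjust the derivative estimate accordingly.

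The fix is exactly what the paper does: shift by one step and apply (2a) to points of $h_{a^\star}(S_i)$ for $n-1$ iterations. That is, if $h_{a^\star}^k(S_i)\cap C_\delta=\emptyset$ for $k=1,\dots,n-1$, then every $y\in h_{a^\star}(S_i)$ satisfies $h_{a^\star}^j(y)\notin C_\delta$ for $j=0,\dots,n-2$, so (2a) gives $|(h_{a^\star}^{n-1})'(y)|\geq b_0\,\delta\,\exp(\lambda_0(n-1))$, and hence $\mathrm{diam}(h_{a^\star}^n(S_i))\geq b_0\,\delta\,\exp(\lambda_0(n-1))\cdot\mathrm{diam}(h_{a^\star}(S_i))$. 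Your lower bound $|S_i|\gtrsim\delta$ (or equivalently a lower bound on $\mathrm{diam}(h_{a^\star}(S_i))$) then finishes the contradiction exactly as you wrote. Your monotonicity bookkeeping is correct and more explicit than the paper's.
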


\begin{proof}
Suppose, by contradiction, that $h_{a^\star}^{n}(S_i) $ never intersect $C_\delta$, for all $n\in \NN$. 
For $C\subset \EU^1$, let us denote by \emph{diam(C)} the maximum distance of points in $C$. Therefore,   for $y \in h_{a^\star}(S_i)$ we have:
\begin{eqnarray*}
\emph{diam}(h_{a^\star}^n(S_i)) &\geq&  \emph{diam}(h_{a^\star}(S_i)) \, m \qquad \text{where} \qquad m= \inf_{y\in h_{a^\star}(S_i)}\emph{diam}((h_{a^\star}^{n-1})'(y)) \\ \\ &>&  \emph{diam}(h_{a^\star}(S_i)) b_0 \delta \exp (\lambda_0(n-1)) \\ \\
  &>&  C   b_0 \delta \exp (\lambda_0(n-1)), \qquad \text{for some}\quad  C>0.
\end{eqnarray*}
Since $\dpt \lim_{n\rightarrow +\infty}C    b_0 \delta \exp (\lambda_0(n-1))=+\infty$ and $\emph{diam}( S_i) \leq 2\pi$, this is a contradiction. Then the images of $S_i$ under $h_{a^\star}$ should intersect $C_\delta$.
\end{proof}

From Lemma \ref{lemma1}, we may identify three disjoint possibilities: \\
\begin{enumerate}
\item there exists $j_0 \in \{1, ..., q\}$ such that $C_\delta^{(j_0)}\subset h_{a^\star}^{m_1}(S_i)$ $\Rightarrow n(i)=m_1$.\\
\item $]c^{(l)}, c^{(l+1)}[\subset h_{a^\star}^{m_1}(S_i)$ for some $l\in\{1, ..., q\}$.  Since $[0,2\pi]\subset h_{a^\star}(]c^{(l)}, c^{(l+1)}[)$ for all $l\in \{1, ..., q\}$ and $\omega \gg 1$ (remind that for $\omega\gg 1$ the map $h_{a^\star}$ is mixing by Proposition \ref{Prop 7.3}), it follows that  $h_{a^\star}^{m_1}(S_i)\cap C_\delta^{(j_0)}\neq \emptyset$ $\Rightarrow n(i)=m_1+1$.\\
\item none of the above.\\
\end{enumerate}

Under the notation of Lemma \ref{lemma1}, let $L_0$ be one  connected component of $h_{a^\star}^{m_1}(S_i)\backslash C_\delta$ with one endpoint at $h_{a^\star}^{m_1}(c^{(i)})$, $i=1,..., q$.

\begin{lemma}
\label{lemma2}
If \textbf{(TH)} holds, there exists $m_2\in \NN$ and a subinterval $L_1$ of $L_0$ such that $h_{a^\star}^k(L_1)\cap C_\delta=\emptyset$ for all $k<m_2$ and $h_{a^\star}^{m_1}(L_1)=\, ]c^{(l)}, c^{(l+1)}[$ for some $l\in \{1, ..., q\}$. 
\end{lemma}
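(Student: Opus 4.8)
The statement of Lemma \ref{lemma2} is the natural continuation of Lemma \ref{lemma1}: having pushed $S_i$ forward until its image first touches $C_\delta$, producing the component $L_0$ of $h_{a^\star}^{m_1}(S_i)\backslash C_\delta$ with an endpoint at the critical-orbit point $h_{a^\star}^{m_1}(c^{(i)})$, we now want to iterate $L_0$ further until one of its subintervals is stretched all the way across a full monotonicity interval $]c^{(l)},c^{(l+1)}[$. The plan is to argue by the same expansion-along-orbits-outside-$C_\delta$ mechanism used in Lemma \ref{lemma1}, but now carefully keeping track of a \emph{shrinking family of subintervals} so that the iterate stays outside $C_\delta$ until the very last step.

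First I would observe that $L_0$ has one endpoint at $h_{a^\star}^{m_1}(c^{(i)})$, and by property (1)(b) in the definition of Misiurewicz-type map (\S\ref{Misiurewicz-type map}, the condition $dist(h^n(\theta),C)\ge\delta_0$ for $\theta\in C$), the whole critical orbit stays a definite distance $\delta_0$ away from $C$; in particular $h_{a^\star}^{m_1}(c^{(i)})\notin C_{\delta_0}$, so $L_0$ has positive length bounded below away from $0$ and lies outside $C_\delta$ by construction. Next, consider the forward iterates $h_{a^\star}^k(L_0)$. If some $h_{a^\star}^k(L_0)$ already contained a full gap $]c^{(l)},c^{(l+1)}[$ we would be done with $L_1=L_0$; otherwise, as long as $h_{a^\star}^k(L_0)\cap C_\delta=\emptyset$ the derivative estimate 2(a) gives $|(h_{a^\star}^k)'|\ge b_0\,\delta\,\exp(\lambda_0 k)$ on $L_0$, so $\diam(h_{a^\star}^k(L_0))\ge b_0\,\delta\,\exp(\lambda_0 k)\,\diam(L_0)\to\infty$, which is impossible on $\EU^1$. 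Hence there is a smallest $k$, call it $m_2$, at which $h_{a^\star}^{m_2}(L_0)$ meets $C_\delta$ (or crosses a gap). At that step the image is long — of size comparable to a fixed fraction of $\EU^1$ once $\exp(\lambda_0 m_2)$ is large, which is where hypothesis \textbf{(TH)}, $\exp(\lambda_0)>2$, enters: it guarantees genuine doubling at each step outside $C_\delta$, so that by the time the orbit re-enters $C_\delta$ the interval has length at least (say) $\delta_0$, hence contains a whole monotonicity interval's worth of $\EU^1$, and in particular $h_{a^\star}^{m_2}(L_0)$ contains some $]c^{(l)},c^{(l+1)}[$. Finally I would take $L_1\subset L_0$ to be the preimage (under $h_{a^\star}^{m_2}|_{L_0}$, which is a diffeomorphism onto its image since no critical point of any iterate lies in the interior) of $]c^{(l)},c^{(l+1)}[$; by minimality of $m_2$ we have $h_{a^\star}^k(L_1)\subset h_{a^\star}^k(L_0)$ disjoint from $C_\delta$ for all $k<m_2$, and $h_{a^\star}^{m_2}(L_1)=\,]c^{(l)},c^{(l+1)}[$ exactly. (I note the statement as written says $h_{a^\star}^{m_1}(L_1)$; this is evidently a typo for $h_{a^\star}^{m_2}(L_1)$, and the proof produces $m_2$.)

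The main obstacle I anticipate is the bookkeeping at the moment of re-entry into $C_\delta$: one must be sure that when $h_{a^\star}^{m_2}(L_0)$ first intersects $C_\delta$, it does so by \emph{overshooting} — i.e. it already contains a full component of $\EU^1\backslash C_\delta$ between two consecutive critical points — rather than merely nicking the boundary of $C_\delta$. This is exactly what \textbf{(TH)} is for: because $C_\delta$ has $q$ small components of total length $O(q\delta)$, while each iterate outside $C_\delta$ at least doubles length, the interval cannot ``sneak in'' to $C_\delta$ without first having grown past length $\delta_0\gg q\delta$ (for $\delta$ small), and an interval longer than the largest gap between critical points and longer than $\diam(C_\delta)$ must contain some $]c^{(l)},c^{(l+1)}[$. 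Making the constants in ``choose $\delta$ small enough'' explicit — relating $\delta<\delta_1$ to $\delta_0$, $q$, $b_0$, $\lambda_0$ — is the one genuinely technical point; everything else is the verbatim argument of Lemma \ref{lemma1} applied to $L_0$ in place of $S_i$.
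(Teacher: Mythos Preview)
The paper gives no proof beyond citing Lemma~3.2 of \cite{OW2010}, so your sketch is already more detailed than the paper's own treatment, and your overall strategy --- iterate, use the Misiurewicz expansion outside $C_\delta$, then pull back a covered gap --- is the correct shape and matches the intent of the citation. You also correctly flag the typo $m_1\to m_2$ and correctly construct $L_1$ as a preimage under the diffeomorphism $h_{a^\star}^{m_2}|_{L_0}$.

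However, your account of how \textbf{(TH)} enters contains a genuine gap. Estimate~2(a) in \S\ref{Misiurewicz-type map} reads $|(h_{a^\star}^n)'|\ge b_0\,\delta\,\exp(\lambda_0 n)$, with prefactor $b_0\delta$ small when $\delta$ is small; hence $\exp(\lambda_0)>2$ does \emph{not} give ``genuine doubling at each step outside $C_\delta$''. Furthermore, at the first return of $h_{a^\star}^{m_2}(L_0)$ to $C_\delta$ the image has length $\ge\delta_0-\delta$ simply because one endpoint lies on the critical orbit (distance $\ge\delta_0$ from $C$) while the other reaches $\partial C_\delta$ --- this uses no expansion and no \textbf{(TH)} --- and that length is far short of forcing coverage of a full monotonicity interval $]c^{(l)},c^{(l+1)}[$, which can be much longer than $\delta_0$. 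So your ``overshooting'' heuristic does not close, and the sentence ``an interval longer than the largest gap \dots\ must contain some $]c^{(l)},c^{(l+1)}[$'' is asserted without having established that the image ever reaches that length before meeting $C_\delta$.

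What is actually required (and what the reference to \cite{OW2010} is meant to supply) is an inductive pruning scheme rather than a single first-return step: one maintains a nested family of subintervals of $L_0$, always anchored at the critical-orbit endpoint, trimming at each stage the portion whose image has entered $C_\delta$, and shows that the \emph{images} nonetheless grow geometrically. Here estimate~2(b) is the relevant one, since the trimmed endpoint satisfies $h^n(\theta)\in C_{\delta_0}$ and therefore enjoys $|(h^n)'(\theta)|\ge b_0\exp(\lambda_0 n)$ without the $\delta$-penalty; \textbf{(TH)} is what ensures this growth outpaces the loss to trimming. That balance is precisely the ``one genuinely technical point'' you flag at the end --- but it is the crux of the proof, not a routine detail, and your present outline does not provide it.
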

\begin{proof}
The proof follows the same lines to those of  Lemma 3.2 of \cite{OW2010}.
\end{proof}

Using Lemmas \ref{lemma1} and \ref{lemma2},  for $\delta>0$ sufficiently small, the map $h_{{a^\star}}$ admits a collection $J_\delta$ such that all vertices of  the directed graph $\mathcal{P}(J_\delta)$ are completely accessible. By Theorem \ref{th_review2}, we may conclude that   for every $\alpha>0$ sufficiently small and for every $\hat{a}\in \Delta(\lambda, \alpha)$ close to ${a^\star}$, there exists a sequence $(a_n)_{n\in \NN}$ converging to $\hat{a}$ for which $h_{a_n}$ admits a superstable sink. 
By \eqref{sequence2}, we have  $$\mu_n= \exp \left(\frac{a_n-2n\pi}{\omega K_F }\right), \quad n\in \NN. $$ 
It is easy to see that $\dpt \lim_{n\rightarrow +\infty}\mu_n=0$. 
 Setting  $\mu_{1,n}=\mu_n$, $n\in \NN$, we obtain  the sequence needed to prove  Theorem  \ref{thm:C}.   The way this superstable periodic orbit (which is a \emph{critical point} of $h_{a_n}$) is obtained has been sketched in Figure \ref{periodic_orbit}.

  \begin{figure}[h]
\begin{center}
\includegraphics[height=5.0cm]{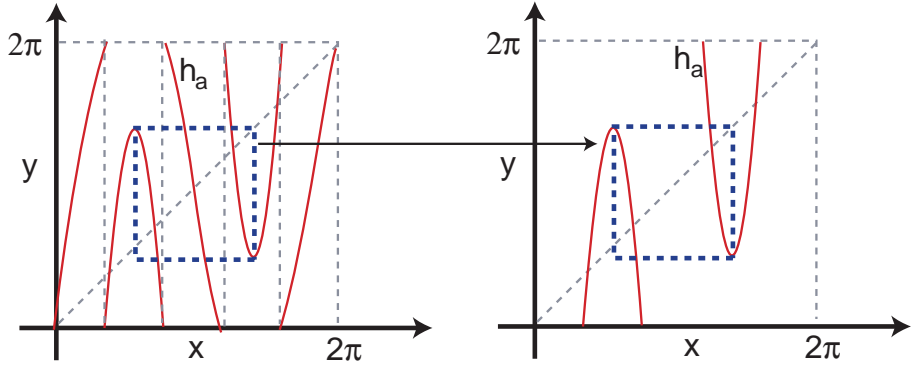}
\end{center}
\caption{\small  Graph of the map $h_a$ for $a=\mu_n$ and $\omega \gg 1$ with $q=2$ (number of critical points). Indicated is a superstable periodic orbit of period 2. }
\label{periodic_orbit}
\end{figure}

\section{Proof of  Theorem \ref{tangency1}}
\label{s: proof Th D}
This case reports the scenario described by Hypotheses \textbf{(P1)--(P5)} and \textbf{(P6b)--(P7b)} with $\mu_1=0$ and  $\mu_2>0$. We use the first return map $\mathcal{G}_{(0,\mu_2)} \mapsto \mathcal{G}_\mu $ (cf. Table 3). 
The proof of Theorem \ref{tangency1} follows the arguments of \cite{LR2017, RLA} and Appendices B and C of \cite{Wang2}. For the sake of completeness, we reproduce the main ideas of the proof.  It was shown  in   \cite{RLA}, that $\Lambda(\mathcal{G}_{(0, \mu_2)})$  contains infinitely many horseshoes near the heteroclinic network which emerges near $W^u(\CC_2)\pitchfork W^s(\CC_1)$. 
We describe the geometry of  $W^u(\CC_2)\cap \In(\CC_1)$ and $W^s(\CC_1)\cap \Out(\CC_2)$ for  $\mu_1=0$ and $\mu_2 \neq 0$. First, we introduce the notation depicted in Figure \ref{spirals1}: \\
\begin{itemize}
\item 
$(O_2^1,0)$ and $(O_2^2,0)$ with $0<O_2^1<O_2^2<2\pi$ are the coordinates of the two points
 where $W^u_\loc(\CC_2)$ meets $W^s_\loc(\CC_1)$ in $\Out(\CC_2)$ in the first turn around the cycle;\\
\item
$(I_1^1,0)$ and $(I_1^2,0)$  with $0<I_1^1<I_1^2<2\pi$  are the coordinates of the two  points  
where $W^u_\loc(\CC_2)$ meets $W^s_\loc(\CC_1)$ in $\In(\CC_1)$ in the first turn around the cycle; \\
\item
$(I_{1}^i,0)$ and $(O_2^i,0)$ are on the same trajectory for each $i \in \{1,2\}$ (also called $0$-pulses).\\
\end{itemize}

By \textbf{(P6b)}, for $\mu_1=0$ and small $\mu_2>0$, the curves $W^s_{\loc}(\CC_1)\cap \Out(\CC_2)$ and $W^u_\loc(\CC_2)\cap \In(\CC_1)$
are the  graphs of periodic functions $\eta_s$ and $\eta_u$, for which we  make the following conventions (see the meaning of $C_1$ and $C'_2$ in Subsection \ref{coords}): \\

\begin{itemize}
\item
$ W^s_{\loc}(\CC_1)\cap \Out(\CC_2)$ 
is the graph of   $\eta_s: \EU^1 \rightarrow [-1/C_2', C'_2]$;\\
\item
$   W^u_{\loc}(\CC_2)\cap \In(\CC_1)$ is the graph
  of  $\eta_u: \EU^1 \rightarrow [-1/C_1, C_1]$; \\
\item
   $\eta_u'(I_1^1), \eta_s'(O_2^1)>0$ and $\eta_u'(I_1^2), \eta_s'(O_2^2)<0$. \\
\end{itemize}

The maximum value of $\eta_u$ depends on $\mu$ and is  attained at some point of the type:
$$
\left(y_2^{(1)}, \theta^{(1)}\right)= ( \theta^\star(\mu), M) \qquad \text{with} \qquad I_1^1<\theta^\star <I_1^2 \qquad \text{and}\qquad 0<M<C_1<1.
$$

We will need to  introduce the  definition of spiral on an annulus $\mathcal{A}$  parametrised by  the coordinates $(y, \theta)\in [0, C_1]\times \EU^1$.

\begin{definition}
\label{spiral_def}
A \emph{spiral} on the annulus $\mathcal{A}$ \emph{accumulating on the circle} defined by  $y=0$ is a curve on $\mathcal{A}$, without self-intersections, that is the image, by the parametrisation $(y,\theta )$, of a $C^1$ map
$H:(b,c)\rightarrow [0,C_1]\times \EU^1$, 
$$
H(s)=\left(y(s), \theta(s) \right),
$$
 such that:\\
\begin{enumerate}
\renewcommand{\theenumi}{\roman{enumi}}
\renewcommand{\labelenumi}{{\theenumi})}
\item \label{monotonicity}
there are $\tilde{b}\le \tilde{c}\in (b,c)$ for which both $\theta(s)$ and $y(s)$ are monotonic in each of the intervals $(b,\tilde{b})$ and $(\tilde{c},c)$; \\
\item \label{turns}
either $\dpt \lim_{s\to b^+}\theta(s)=\lim_{s\to c^-}\theta(s)=+\infty$ or $\dpt \lim_{s\to b^+}\theta(s)=\lim_{s\to c^-}\theta(s)=-\infty;$ \\
\item \label{accumulates}
$\dpt \lim_{s\to b^+}y(s)=\lim_{s\to c^-}y(s)=0$.
\end{enumerate}
\end{definition} 

\begin{figure}[ht]
\begin{center}
\includegraphics[height=3.8cm]{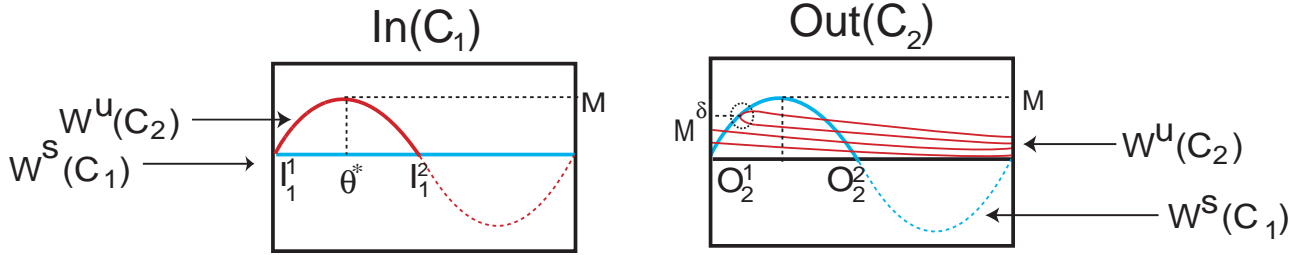}
\end{center}
\caption{\small The set $\eta_u=W^u(\CC_2)\cap \In^+ (\CC_1)$ is mapped by ${Loc}_2\circ \Psi_{1\rightarrow 2}\circ  Loc_1$ into a spiral accumulating on the circle defined by $\overline{y}_1^{(2)}=0$ (\emph{ie} the parametrisation of $\Out(\CC_2)\cap W^u(\CC_2)$).  There is a sequence $(\mu_i)_i$ for which the flow of $\mathcal{G}_{\mu_i}$ exhibits a quadratic heteroclinic tangency. }
\label{spirals1}
\end{figure} 

It follows from the assumptions on the function $\theta(s)$ that it has either a global minimum or a global maximum, and that $y(s)$ always has a global maximum.
The point where the map $\theta(s)$  has a global minimum or a global maximum will be called a \emph{fold point} of the spiral.
The global maximum value of $r(s)$ will be called the \emph{maximum radius} of the  spiral.

\begin{lemma}[Prop. 10 of \cite{LR2017}, adapted]
\label{lemma_auxiliar}
The map $ {Loc}_2\circ \Psi_{1\rightarrow 2}\circ  Loc_1$ transforms the line $W^u(\CC_2)\cap \In ^+(\CC_1)$ \footnote{This line is part of the graph of $\eta_u$ (red continuous line on the left image of Figure \ref{spirals1}).}  into a spiral on $Out(\CC_2)$ accumulating on the circle defined by 
$\Out(\CC_2)\cap W^u(\CC_2)$ (i.e., $\overline{y}_1^{(2)}=0$). This spiral has maximum radius $M^\delta$ as $\mu$ tends to zero. It has a fold point that turns around $W^u(\CC_2)\cap \Out^+(\CC_2)$. 
\end{lemma}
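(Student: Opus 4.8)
The plan is to track the image of the graph of $\eta_u$ through the three maps $Loc_1$, $\Psi_{1\rightarrow 2}$ and $Loc_2$ in succession, showing at each stage that the relevant curve is of the expected type (a graph of a periodic function, then a spiral-like curve after the logarithmic local map), and finally verifying the three defining properties of Definition \ref{spiral_def}, together with the claimed maximum radius and fold behaviour. Throughout I would use the simplified formulas of Lemma \ref{return2} (and the underlying local map \eqref{local_map1} with $b_1=b_2=\varepsilon_0=1$), so that $Loc_1$ acts on $\left(y_2^{(1)},\theta^{(1)}\right)$ by $\overline{y}_1^{(1)}=\left(y_2^{(1)}/\|\mu\|\right)^{-\delta_1}\cdot\|\mu\|^{-1}$ (up to the normalisations) and $\overline\theta^{(1)}=\theta^{(1)}+\frac{\omega}{e_1+w_2^{(1)}}\ln\left(\frac{\varepsilon_0}{\|\mu\|y_2^{(1)}}\right)$.

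First I would parametrise the line $W^u(\CC_2)\cap\In^+(\CC_1)$ by $s\mapsto\left(\eta_u(s),s\right)$ with $s\in\EU^1$, where $\eta_u>0$ on an interval $(I_1^1,I_1^2)$, vanishes at the endpoints with $\eta_u'(I_1^1)>0>\eta_u'(I_1^2)$, and attains its maximum $M$ at an interior point $\theta^\star$. Apply $Loc_1$: as $s\to I_1^1{}^+$ or $s\to I_1^2{}^-$ we have $y_2^{(1)}=\eta_u(s)\to0^+$, hence $\ln\left(\frac{\varepsilon_0}{\|\mu\|\,\eta_u(s)}\right)\to+\infty$, so the $\overline\theta^{(1)}$-coordinate tends to $+\infty$ at both ends (this gives property \ref{turns}), while the $\overline{y}_1^{(1)}$-coordinate, which is an increasing function of $y_2^{(1)}$, tends to a value proportional to $0^\delta$ style behaviour — more precisely, tracking it through the remaining maps I get that after $Loc_2\circ\Psi_{1\rightarrow 2}$ the radial coordinate on $\Out(\CC_2)$ is $\overline{y}_1^{(2)}\sim\left(\eta_u(s)\right)^{\delta_1\delta_2}=\left(\eta_u(s)\right)^\delta$ up to bounded factors, which tends to $0$ at both ends (property \ref{accumulates}) and has global maximum $\asymp M^{\delta}$ as $\mu\to0$ (the stated maximum radius). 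For the monotonicity property \ref{monotonicity} I would split $\EU^1$ at $\theta^\star$: on each of $(I_1^1,\theta^\star)$ and $(\theta^\star,I_1^2)$ the function $\eta_u$ is monotone, hence so is $\overline{y}_1^{(2)}$, and $\overline\theta^{(2)}$ is a sum of $\theta^{(1)}$ plus a monotone function of $\ln\eta_u(s)$ plus $C^1$-small perturbations, so it is eventually monotone near each end; the fold point of the spiral is then the image of $\theta^\star$ (or a nearby point), and I would check it winds around $W^u(\CC_2)\cap\Out^+(\CC_2)$, i.e.\ sits at positive radial coordinate, since $M>0$.

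The main technical obstacle is establishing genuine monotonicity of $\overline\theta^{(2)}(s)$ on the two half-intervals rather than merely the limiting behaviour at the endpoints. The angular coordinate has the schematic form $\overline\theta^{(2)}(s)=s+\text{const}+\left(\text{const}\right)\ln\eta_u(s)+\left(\text{const}\right)\ln\left[\eta_u(s)+\phi_2(\cdots)\right]+\mu_2\psi_2(\cdots)$, and near an endpoint where $\eta_u(s)\to0$ linearly in $s-I_1^i$, the dominant term is $\left(\text{const}\right)\ln|s-I_1^i|$, whose derivative blows up and controls the sign; but on the intermediate part of the interval the $\ln\eta_u$ contribution competes with the $+s$ term, so I would either (i) take $\delta_0$, and hence the domain, small enough and $\omega\gg1$ so that the $\omega$-weighted logarithmic term dominates on the whole of $(I_1^1,\theta^\star)$ and $(\theta^\star,I_1^2)$ except possibly near $\theta^\star$ itself, absorbing the non-monotone piece into the transition region allowed by the choice $\tilde b\le\tilde c$ in Definition \ref{spiral_def}, or (ii) invoke the already-adapted statement of Prop. 10 of \cite{LR2017} essentially verbatim, since the only change here is the replacement of a single local passage by the composition $Loc_2\circ\Psi_{1\rightarrow2}\circ Loc_1$, which contributes only an extra smooth diffeomorphism $\Psi_{1\rightarrow2}$ and an extra power $\delta_2$ in the radial direction — neither of which affects the spiral structure. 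I expect to follow route (ii), citing \cite{LR2017} for the core estimate and supplying only the bookkeeping that propagates the $\delta$-power and the constant $M$ through the extra map, which is the content of the informal sketch in Figure \ref{spirals1}.
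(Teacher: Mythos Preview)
The paper provides no proof of this lemma at all: it is stated as an adaptation of Prop.~10 of \cite{LR2017} and followed only by the sentence ``The geometric idea of Lemma \ref{lemma_auxiliar} is depicted in Figure \ref{spirals1}'' together with the one-line observation that $M^\delta<M<C_1$. Your route~(ii) --- citing \cite{LR2017} for the spiral structure and supplying only the bookkeeping that the extra smooth diffeomorphism $\Psi_{1\to2}$ and the extra power $\delta_2$ in the radial coordinate do not disturb the spiral --- is therefore precisely what the paper does, and in fact you go further by spelling out what that bookkeeping amounts to.

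Your route~(i), the direct verification of Definition~\ref{spiral_def} by tracking the curve through the three maps, is more than the paper attempts. The sketch is sound in outline: the logarithmic term in $\overline\theta^{(i)}$ forces the angular coordinate to $+\infty$ at both endpoints (property~\ref{turns}), the radial coordinate on $\Out(\CC_2)$ is indeed $\|\mu\|^{\delta-1}(\eta_u(s))^\delta$ (with $\mu_1=0$ the transition $\Psi_{1\to2}$ is linear in the radial variable), giving property~\ref{accumulates} and the stated maximum $M^\delta$ up to the $\mu$-dependent prefactor, and your handling of the monotonicity gap via the allowance $\tilde b\le\tilde c$ in Definition~\ref{spiral_def} is the right idea. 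One small slip: the formula you wrote for $\overline y_1^{(1)}$ in the first paragraph has the sign of the exponent reversed; from \eqref{local_map1} with $\varepsilon_0=1$ one gets $\overline y_1^{(1)}=\|\mu\|^{-1}(\|\mu\|\,y_2^{(1)})^{\delta_1}$, which is increasing in $y_2^{(1)}$ as you later use. This does not affect the argument.
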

The geometric idea of Lemma \ref{lemma_auxiliar} is depicted in Figure \ref{spirals1}.  
Since $\delta>1$ and $M<1$, then  $M^\delta<M<C_1$.
\subsection{Proof of  Theorem \ref{tangency1}(1): rotational horseshoes}

For $\tau>0$ sufficiently small, define a rectangle $R\subset \Out(\CC_2)$ parameterized by $[O_2^1-\tau, O_2^2+ \tau]\times ]0, \tau]$. The map $\mathcal{G}_{\mu}$ compresses $R$ in the vertical direction and stretches in the vertical direction, making the image infinitely long towards both ends. In other words, the map $\mathcal{G}_{\mu}$ folds and wraps $R$ infinitely many times. This is why $\Lambda (\mathcal{G}_{\mu})$ contains a horseshoe of infinitely many branches for all $\mu\in [0, \varepsilon]$. When restricted to a compact set of $\Out^+(\CC_2)$ not containing $W^u(\CC_2)$, these horseshoes are uniformly hyperbolic. All details of this proof may be found in  \cite[Th. 8]{RLA}\footnote{The shift dynamics is what the authors of \cite{RLA} call \emph{horseshoe in time}.}

\subsection{Proof of  Theorem \ref{tangency1}(2): sequence of heteroclinic tangencies}

The set $\eta_s$ divides $\Out(\CC_2)$ in two connected components.
By Lemma \ref{lemma_auxiliar} and using the fact that $M^\delta<M$,  the fold point of  ${Loc}_2\circ \Psi_{1\rightarrow 2}\circ  Loc_1(W^u(\CC_2)\cap \In^+(\CC_1))$ moves around $\Out(\CC_2)$ at a speed (with respect to the parameter $\mu$) greater than the speed of $\eta_s$, from one connected component to the other, as $\mu$ vanishes. 
Two points where the fold point of the  spiral ${Loc}_2\circ \Psi_{1\rightarrow 2}\circ  Loc_1(W^u(\CC_2)\cap \In^+(\CC_1))$   intersects the graph of $\eta_s$  come together and collapse  at the heteroclinic tangency associated to the two-dimensional manifolds $W^s(\CC_1)$ and $W^u(\CC_2)$.  As $\mu$ goes to zero, it creates a sequence $(\mu_i)_{i\in\NN}$ of tangencies to the graph of $\eta_s$ (see Figure \ref{spirals1}).  This tangency  has a quadratic form.

 \subsection{Proof of  Theorem \ref{tangency1}(3): strange attractors }
 
 
By  \cite{RLA} there is a   horseshoe near the $0$-pulses associated to $W^u(\CC_2)\pitchfork W^s(\CC_1)$.
Hence, there are hyperbolic fixed points of the first return map $\mathcal{G}_\mu$ arbitrarily close to  the $0$-pulse; let  $P_i$ be one of these periodic orbits.

First, note that $P_i$ is dissipative (the absolute value of the product of the Lyapunov multipliers is less that the unit).  The unstable manifold of $P_i$ crosses $W^s(\CC_1)$ and so its image under ${Loc}_2\circ \Psi_{1\rightarrow 2}\circ  Loc_1$ accumulates on $W^u(\CC_2)$ (some reasoning of Lemma  \ref{lemma_auxiliar});  in particular, ${Loc}_2\circ \Psi_{1\rightarrow 2}\circ  Loc_1(W^u(P_i)\cap \Out(\CC_2))$ contains  infinitely many spirals in $\Out(\CC_2)$, each one having  a fold point.
Since the fold points turn around $\Out(\CC_2)$ infinitely many times as $\mu$ varies,
this curve is tangent (quadratic tangency) to $W^s(P_i)$ at a sequence $\mu_i$ of values of $\mu$. Hence, there exists a sequence of parameter values for which the associated flow exhibits non-degenerate heteroclinic tangencies formed by the invariant manifolds of the periodic orbits of the horseshoe. The existence of strange attractors of H\'enon-type follows from \cite{MV93}. Setting  $\mu_{2,i}=\mu_i$, $i\in \NN$, we obtain  the sequence needed to prove  Theorem \ref{tangency1}(3). 
  
  \subsection{Proof of  Theorem \ref{tangency1}(4): sequence of sinks}
  
  The existence of a sequence of parameter values for which the flow of \eqref{general_R3} exhibits a sink is a result of the quadratic homoclinic tangency associated to a dissipative point. The result follows from Gavrilov-Shilnikov \cite{GavS} and Newhouse \cite{Newhouse79} theories.

\section{Proof of Proposition \ref{thm:G}}
\label{Proof_ThE}

This case addresses the scenario described by $\mu_1=0$ and  $\mu_2>0$. This is why we use the first return map $\mathcal{G}_{(0, \mu_2)} \mapsto \mathcal{G}_\mu $ (cf. Table 3). 
Taking into account Lemma \ref{return2}, the singular cycle associated to $\mathcal{G}_{ \mu}$ has the form:
\begin{equation}
\label{singular2}
 h_a\left(\theta \right)= \theta+ \xi  + \mu_2\psi_2\left( \theta\right) -
  a   - \omega K_G  \ln \left[  \phi_2\left(   \theta\right)\right] , \qquad \theta \in \EU^1,
\end{equation}
where $$\xi=\xi_1+\xi_2 \qquad \text{and} \qquad K_G = \frac{1}{e_1+\omega_2^{(1)}} +\frac{ \delta_1}{e_2+\omega_2^{(2)}}.$$ The meaning of $a$ is the same of equation \eqref{sequence2}.
Since $\phi_2$ has zeros, the singular limit has logarithmic singularities. 
By \cite[pp. 534]{TW12}, for  $\varepsilon>0$ small and  $\omega \gg 1$, there exists a set $\Delta \subset \EU^1$ of $a$-values with $Leb_1(\Delta)>0$ such that if $a\in \Delta$ and $c\in C$ (set of critical points of $h_a$), the following inequality holds:
\begin{equation}
\label{limsup}
\forall n\in \NN, \qquad |(h_a^n)'(h_a(c))|>(\omega K_G )^{\lambda n}.
\end{equation}
In addition $\dpt\lim_{\omega \rightarrow +\infty}Leb_1(\Delta)=2\pi$. Since the recurrence of the critical points is almost inevitable with respect to the Lebesgue measure (\cite{TW12}), from \eqref{limsup} we may conclude that  for almost all points $\theta\in \EU^1$, we have:
$$
\limsup_{n\rightarrow +\infty} \frac{\ln |(h_a^n)'(\theta)|}{n} \geq \frac{\ln \omega}{\log \varepsilon} +\mathcal{O}(1) >0,
$$
where $\mathcal{O}(1)$ denotes the usual \emph{Landau notation}. This proves Proposition \ref{thm:G}.

\begin{remark}
The nature of the strange attractor of Proposition \ref{thm:G} is different from that of Theorem \ref{tangency1}. In the latter case, the strange attractor is of H\'enon-type and its basin of attraction is confined to a portion of the phase space near the homoclinicity. In the first case, the strange attractor shadows the entire locus of a two-dimensional torus (defined by $W^u(\CC_2)$). This strange attractor coexists with the heteroclinic pulses whose existence is guaranteed in \cite{RLA}.
\end{remark}
 
\section{Proof of Theorem \ref{mixture}}
\label{Proof_ThF}
We are looking for  homoclinic tangencies to $\CC_1$. In $\Out(\CC_1)$, the local unstable manifold of $\CC_1$ is parametrised by 
$$
\left(0, \overline\theta^{(1)}\right), \qquad \overline\theta^{(1)}\in \EU^1
$$

The expression for $ \Psi_{2\rightarrow 1} \circ \emph{Loc}\, _2  \circ \Psi_{1\rightarrow 2} (W^u_\loc(\CC_1)\cap \Out(\CC_1))\subset \In(\CC_1)$ is given by:

\begin{equation}
\label{homoC}
 \overline{y}_2^{(1)}  =  b_2 \frac{\mu_1^{\delta_2}}{\|\mu\|} \phi_1\left(0, \theta^{(1)}\right)^{\delta_2} + \frac{\mu_2}{\|\mu\|} \phi_2\left (\overline{y}_1^{(2)}, \theta^{(2)}\right)
\end{equation}

where
\begin{eqnarray*}
\overline{y}_1^{(2)} &=&  \frac{\mu_1^{\delta_2}}{\|\mu\|} \phi_1\left(0, \theta^{(1)}\right)^{\delta_2}\\
\overline{\theta}^{(2)} &=& \overline{\theta}^{(1)} + \xi_1 +\mu_1 \psi_1\left(0, \overline{\theta}^{(1)}\right) - \frac{\omega}{e_2+w_2^{(2)}}\ln \left(\mu_1 \phi_1\left(0, \overline{\theta}^{(1)}\right)\right)
\end{eqnarray*}

There is a homoclinic cycle to $\CC_1$ when $ \overline{y}_2^{(1)}=0$ (parameterization of the  local stable manifold of $\CC_1$). Using \eqref{homoC}, we get
$$
\text{Hom:}\qquad b_2 \mu_1^{\delta_2} C_1 + \mu_2C_2=0 \Leftrightarrow |\mu_2|= C |\mu_1|^{\delta_2}, \qquad C, C_1, C_2\in \RR^+.
$$

The invariant manifolds associated to $\CC_1$ develop a tangency along the curve \emph{Hom}, and  above this tangency (in the parameter space) there are transverse heteroclinic connections and thus a heteroclinic tangle. In the convex region defined by the curve \emph{Hom}, the set $W^u(\CC_1)$ does not play any longer the role of separatrix.   A plausible  bifurcation diagram is depicted in  Figure \ref{bd1} and an interpretation for the different regions   follows:
 \begin{eqnarray*}
\textbf{I} \quad &\to& \quad \text{Attracting two-dimensional torus if $\omega\approx 0$};  \\
\textbf{I} \quad &\to& \quad \text{Rank-one attractors   if $\omega\gg 1$}; \\
\textbf{II} \quad &\to& \quad \text{Dynamical structures associated to the torus-breakdown scenario when $\omega\approx 0$ }\\ 
  \quad & & \quad \text{and  heteroclinic bifurcations};\\ 
\textbf{III} \quad &\to& \quad \text{Heteroclinic tangles;  pulses; ``large'' strange attractors  if $\omega\gg 1$}.
\end{eqnarray*}

Dynamical properties of these three regions are discussed in Section \ref{s: discussion}.

\section{Rewriting Properties \textbf{(P7a)} and \textbf{(P7b)} with Melnikov integrals }
\label{s: Melnikov}
We start with the Melnikov functions for system \eqref{general_R2_perturbed}, explicitly defined for the unperturbed heteroclinic solutions $\ell_1$ and $\ell_2$ respectively. Let:
$$
\tau_1(t)=\frac{1}{|\ell_1'(t)|} l'_1(t) \qquad \text{and} \qquad \tau_2(t)=\frac{1}{|\ell_2'(t)|}l'_2(t) \
$$
be the unit tangent vectors of the heteroclinic solutions at $\ell_1$ and $\ell_2$ respectively. It is easy to check that:
$$
\lim_{t\rightarrow-\infty} \tau_1(t)=\overline{u}(e_1), \qquad \lim_{t\rightarrow+\infty} \tau_1(t)=\overline{u}(c_2), 
$$
and
$$
\lim_{t\rightarrow-\infty} \tau_2(t)=\overline{u}(e_2), \qquad \lim_{t\rightarrow+\infty} \tau_2(t)=-\overline{u}(c_1).
$$

For $i\in \{1, 2\}$, let $\tau_i^\perp(t) $  denote a unit vector that is perpendicular to $\tau_{\ell_i}(t)$ and $\left(\tau_{\ell_i}^\perp(t) \right)^T$  its transpose.
 The splitting of the stable and unstable manifolds on a global transverse cross section $\Sigma$ for the perturbed system  is measured by  the \emph{Melnikov function}:
$$
W_i(\theta)=\int_{-\infty}^{+\infty} \left\langle(P(\ell_i(t), t+\theta), Q(\ell_i(t), t+\theta)),\tau_i^\perp(t) \right\rangle \exp \left( -\int_{0}^tE_i(s)ds  \right) dt,\quad  \theta\in \EU^1
$$
where $\left\langle.,.\right\rangle$ denotes the usual inner product\footnote{This inner product corresponds to the \emph{wedge product}  $\wedge$ defined by Melnikov \cite{Melnikov}  (see also \cite{GH}).} in $\RR^2$ and
$$
E_i(t) = \tau_{\ell_i}^\perp(t) 
 \left( 
  \begin{array}{cc} \dpt
\frac{\partial  g_1 }{\partial x}(\ell_i(t))& \dpt\frac{\partial  g_1 }{\partial y}(\ell_i(t))\\
\dpt\frac{\partial  g_2 }{\partial x}(\ell_i(t)) &\dpt \frac{\partial  g_2 }{\partial y}(\ell_i(t))\\
\end{array} 
\right)
\left(\tau_i^\perp(t) \right)^T \in \RR.
$$

It is easy to check that 
$$
\lim_{t\rightarrow - \infty} E_1(t)= e_1  \qquad \text{and} \qquad \lim_{t\rightarrow + \infty} E_1(t)= c_2
$$
and
$$
\lim_{t\rightarrow - \infty} E_2(t)= e_2  \qquad \text{and} \qquad \lim_{t\rightarrow + \infty} E_2(t)= e_1.
$$
 
 \bigbreak
According to \cite{Bertozzi, GH, Melnikov}, Hypotheses  \textbf{(P7a)} and \textbf{(P7b)} main be rephrased as:
 \bigbreak
\begin{enumerate}
 \item[\textbf{(P7a)}] If $\mu_1>0$ and $\mu_2=0$, then $\dpt \min_{\theta \in \EU^1} W_1(\theta)$ and $\dpt \max_{\theta \in \EU^1} W_1(\theta)$ have the same sign. 
  \medbreak 
 \item[\textbf{(P7b)}]  If $\mu_2>0$ and $\mu_1=0$, then $\dpt \min_{\theta \in \EU^1} W_2(\theta)<0<\max_{\theta \in \EU^1} W_2(\theta)$ and if $W_2(\theta_0)=0$ for some $\theta_0\in \EU^1$, then $W_2'(\theta_0)\neq 0$.
\end{enumerate}

\section{Rank-one strange attractors and heteroclinic tangles: \\ a short discussion}
\label{s: discussion}
When a planar heteroclinic cycle associated to two dissipative saddles is periodically perturbed, the perturbation either pulls the stable and the unstable manifolds of the equilibria completely apart, or it creates chaos through a heteroclinic tangle. In both (exclusive) cases, the singular limit induced by the perturbed equation (at least $C^4$) in the extended phase space may be written as a family of two-dimensional maps. The   \emph{singular limit cycle}   is a one-dimensional map of the form:
$$
\theta \mapsto \theta + a + \omega K \ln |\Phi(\theta)|, \qquad \theta \in \EU^1
$$
where:
\begin{enumerate}
\item $K>0$ depends on the eigenvalues of the derivative of the original vector field at the hyperbolic equilibria;\\
\item $\omega$ is the frequency of the non-autonomous perturbation;\\
\item  $a \in \EU^1$ (depends on the magnitude of the forcing);\\
\item $\Phi: \EU^1 \to \RR$ is $C^3$ and periodic;\\
\item  $\Phi'(\theta)\neq 0$ if $\Phi(\theta)=0$ and $\Phi''(\theta )\neq 0$ if $\Phi'(\theta)=0$.\\
\end{enumerate}
 Usually, the map $\Phi$ may be seen as the \emph{classical Melnikov function}. 
\medbreak
For system \eqref{general_R3}, if  $\mu_1> 0 $ and $\mu_2=0$, the stable and unstable manifolds of the perturbed saddles are pulled completely apart by the forcing function, implying $\Phi (\theta)\neq 0$ for all $\theta \in \EU^1$. In this case, we obtain an attracting two-dimensional torus or strange attractors, to which the theory of rank-one maps may be applied. Here, the parameter $\omega$ plays an important role to understand how ``large'' strange attractors come from the destruction of an attracting  two-dimensional torus.

If  $\mu_1= 0 $ and $\mu_2>0$, the two-dimensional stable and unstable manifolds of the saddles intersect ($\Leftrightarrow \Phi(\theta) = 0$ has solutions)  and strange attractors are associated to a heteroclinic tangle. 
As $\omega$ gets larger, the contracting region gets smaller and the dynamics is more and more expanding in most of the phase space. The recurrence of the critical points is inevitable, and infinitesimal changes of dynamics occur when $a$ is varied. The logarithmic nature of the singular set turns out to present a new phenomenon which is unknown to occur for Misiurewicz-type maps. Proposition \ref{thm:G} states that strange attractors with \emph{nonuniform expansion} prevails  provided $\omega\gg 1$. When $\mu_1, \mu_2\neq 0$, we also proved   that, under conditions \textbf{(P1)--(P7)}, the existence of heteroclinic tangles is a \emph{prevalent phenomenon} for the dynamics of  \eqref{general_R3}.


The techniques we have used to prove the main results follow the spirit of previous results in the literature. In Table 4, we give an overview of the results   and the contribution of the present article (in blue) for the dynamics of \eqref{general_R3}.

\begin{table}[htb]
\small
\begin{center}
\begin{tabular}{|c|l|l|} \hline   && \\
{Configuration}  & \qquad  $W^u(\CC_1) \pitchfork W^s(\CC_2)$ \qquad  \qquad &\qquad  $W^u(\CC_1) \cap W^s(\CC_2)=\emptyset$ \qquad \\  && \\
\hline \hline
&&\\
 &Case 1 &Case 2 \textcolor{blue}{(Novelty of this article)}\\ 
  & Heteroclinic tangles &  Region with torus if $\omega \approx 0$  \\ 
  $W^u(\CC_2) \pitchfork W^s(\CC_1)$&  (Horseshoes, tangencies, sinks &  Region with rank-one attractors if  $\omega \gg 1$ \\ 
    & Newhouse phenomena, pulses) &  Superstable sinks \\ 
    & \cite{ChOW2013, LR2017, RLA}&  Heteroclinic tangles prevail  \\ 
 && \\
 \hline \hline  &&\\
 &Case 3 & Case 4 \\ && Region with torus if $\omega \approx 0$\\ 
$W^u(\CC_2) \cap W^s(\CC_1)=\emptyset$ &Similar to Case 2 &  Region with rank-one attractors if  $\omega \gg 1$ \\
&  & \cite{Mohapatra} \\

&& \\ \hline

\hline

\end{tabular}
\end{center}
\label{notationA}
\bigskip
\caption{\small Overview of the results in the literature and the contribution of the present article (in blue) for the dynamics of \eqref{general_R3}. }
\end{table}

Strange attractors found in Theorems \ref{thm:F} and  \ref{tangency1} are qualitative different. In the first case, they are rank-one strange attractors; if  $\omega \gg 1$, they are not confined to a small portion of the phase space -- their basin of attraction spreads around the whole ``torus-ghost" (annulus in the cross section $\Out(\CC_1)$). According to \cite{BST98}, they are called ``large'' strange attractors.
On the other hand, in Theorem  \ref{tangency1},  H\'enon-type strange attractors are confined to a small portion of the phase space near the homoclinic tangency. In this case, the study of  properties of the strange attractors is more involved due to the existence of infinitely many pulses which cannot be disconnected from the attractor  (there are infinitely many points within $W^s(O_1)$ where the first return map is not well defined).  This difference justifies the title of this manuscript. 
 A lot more needs to be done before these two types of chaos are well understood.

The sinks of Theorems \ref{thm:C} and \ref{tangency1}(3) have similarities but they have been obtained in a different way. While in the first case,  sinks are due to critical periodic points of the singular cycle \cite{OW2010}, in the second, sinks are a consequence of Gavrilov-Newhouse phenomena \cite{GavS, Newhouse79}. 

Finally, we would like to point out that in Case 4,   the non-wandering set associated to $\Gamma$  has one, two or three attracting tori according to the relative position of $W^u(\CC_1)$ and $W^u(\CC_2)$.   In the same spirit of \cite{ChOW2013}, in Figure \ref{other_cases1}, we have summarized all possibles of invariant curves that can appear in the unfolding of   system \eqref{general_R3}, for  $\mu_1\neq 0 $ and $\mu_2=0$.  In these cases, Theorems \ref{thm:B}, \ref{thm:F} and \ref{thm:C} still hold with minor variations.
Finding an explicit example where Hypotheses \textbf{(P1)--(P7)} are met is the next ongoing research. 

 \begin{figure}[h]
\begin{center}
\includegraphics[height=8.1cm]{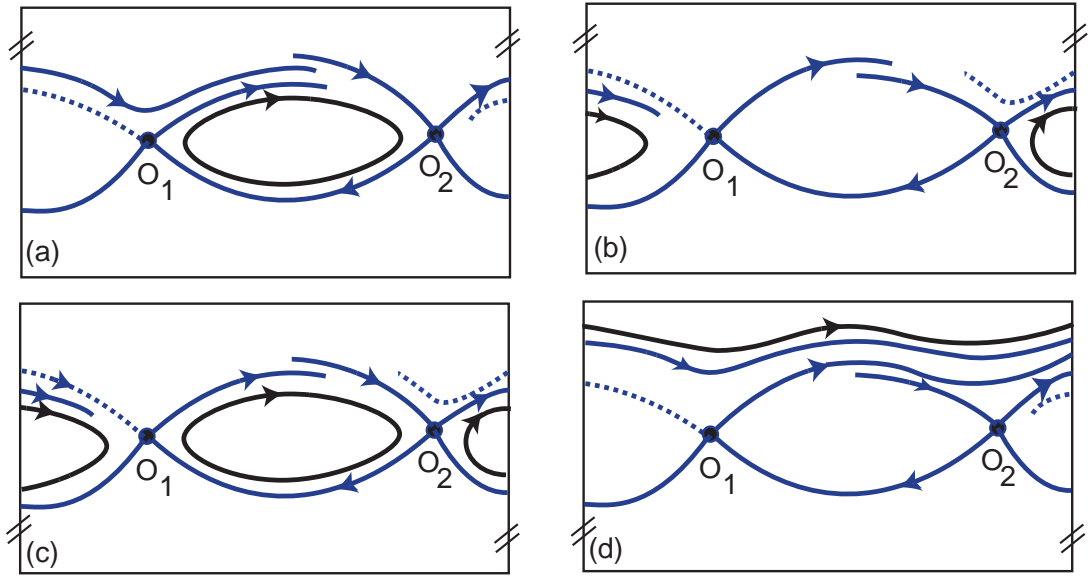}
\end{center}
\caption{\small  Four types of configuration for \eqref{general_R2_perturbed} when $\mu_1>0$ and $\mu_2=0$. (a), (b): one contractible periodic solution; (c): two contractible periodic solutions;  (d) one non-contractible periodic solution. Double bars mean that the sides are identified. }
\label{other_cases1}
\end{figure}

\appendix
\section{Notation}
\label{app1}
In Table 5, we list the main notation for constants and auxiliary functions used in this paper in order of appearance with the reference of the section containing a definition.

\begin{table}[htb]
\begin{center}
\begin{tabular}{|c|l|c|} \hline 
{Notation}  & \qquad  Definition/meaning \qquad  \qquad &\quad  Section \qquad \\
\hline \hline
&&\\
$\mathcal{V}$ & Open region of $\RR^2$ where equation \eqref{general_R2} is well defined  & \S \ref{ss:setting}  \\  &&\\
 \hline \hline 
 &&\\

$ O_1, O_2$  &  Saddle-equilibria of the equation \eqref{general_R2}   & \S \ref{ss:setting}  \\  &&\\
 \hline \hline 
 &&\\

$ {\ell}_1,  {\ell}_2$  & Connections from $O_1$ to $O_2$ and from $O_2$ to $O_1$    & \S \ref{ss:setting}  \\  &&\\

 \hline \hline 
 &&\\

$\mathcal{A}$ & Region limited by the heteroclinic cycle $ {\ell}_1\cup {\ell}_2$  & \S \ref{ss:setting}   \\  &&\\
 \hline \hline  
 &&\\
$\mathcal{V}^\star$ & Inner basin of attraction of the heteroclinic cycle $ {\ell}_1\cup {\ell}_2$  & \S \ref{ss:setting}  \\  &(absorbing domain)&\\
 \hline \hline  &&\\
 $\mathbb{V}$ & $\mathcal{V}\times \EU^1$ -- open region where equation \eqref{general_R3} is defined  & \S \ref{ss:lift}  \\  &&\\
 \hline \hline  &&\\
 $ \CC_1, \CC_2$  & Saddle periodic solutions of the equation \eqref{general_R3}   & \S \ref{ss:lift}  \\  &&\\
 \hline \hline  &&\\
 $ \Gamma$  & Heteroclinic cycle associated to  $ \CC_1, \CC_2$   & \S \ref{ss:lift}  \\  &&\\
 \hline \hline 
 &&\\
$\mathbb{A}$ & $\mathcal{A}\times \EU^1$  & \S \ref{ss:lift}  \\  &&\\
 \hline \hline  &&\\
 $\mathbb{V}^\star$ &  $\mathcal{V}^\star\times \EU^1$  & \S \ref{ss:lift}  \\  &&\\
 \hline \hline  &&\\
 $ \mathcal{L}_1, \mathcal{L}_2$  &   Connections from $\CC_1$ to $\CC_2$ and from $\CC_2$ to $\CC_1$   & \S \ref{ss:lift}  \\  &&\\

 \hline \hline 
 &&\\
$V_1, V_2$ & Hollow cylinders around $\CC_1$ and $\CC_2$  & \S \ref{ss:lift}  \\  &&\\
 \hline \hline  &&\\  $A\equiv_\mathbb{V^\star} B$  & The manifolds $A$ and $B$   coincide within $\mathbb{V}^\star$  & \S \ref{ss:parameters}  \\  &&\\
    \hline \hline  &&\\  $\mathcal{F}_{(\mu_1, \mu_2)}\equiv \mathcal{F}_{\mu}$  & First return map to $\Out(\CC_1)$  & \S \ref{ss:simplified}     and Table 3\\  &&\\
    \hline \hline  &&\\  $\mathcal{G}_{(\mu_1, \mu_2)}\equiv \mathcal{G}_{\mu}$    & First return map to $\Out(\CC_2)$  & \S \ref{ss:simplified}  and Table 3  \\  &&\\
   
  \hline  
 \hline  
\end{tabular}
\end{center}
\label{notationB}
\bigskip
\caption{\small Notation.}
\end{table} 

\end{document}